\documentclass[12pt]{amsart}
\usepackage{amsmath}
\usepackage{amsfonts}
\usepackage{amssymb}
\usepackage{amscd}
\usepackage{mathtools}
\usepackage{amsxtra}
\usepackage{amsthm}
\usepackage{graphicx}
\usepackage[abbrev,alphabetic]{amsrefs}
\PassOptionsToPackage{dvipsnames}{xcolor}
\usepackage{soul,xcolor}
\setstcolor{red}
\usepackage{stmaryrd}
\usepackage{booktabs}
\usepackage{multirow}
\newtagform{tiny}{\tiny(}{)}

\usepackage{mathtools}
\usepackage{hyperref}
\usepackage[margin=1.25in]{geometry}
\usepackage{mathrsfs}

\usepackage{amsthm}
\usepackage{comment}
\usepackage[all,cmtip]{xy}
\usepackage{tikz-cd}
\usetikzlibrary{cd}

\tikzcdset{
  cells={font=\everymath\expandafter{\the\everymath\displaystyle}},
}

\usepackage[all]{xy}

\makeatletter
\def\@tocline#1#2#3#4#5#6#7{\relax
  \ifnum #1>\c@tocdepth 
  \else
    \par \addpenalty\@secpenalty\addvspace{#2}%
    \begingroup \hyphenpenalty\@M
    \@ifempty{#4}{%
      \@tempdima\csname r@tocindent\number#1\endcsname\relax
    }{%
      \@tempdima#4\relax
    }%
    \parindent\z@ \leftskip#3\relax \advance\leftskip\@tempdima\relax
    \rightskip\@pnumwidth plus4em \parfillskip-\@pnumwidth
    #5\leavevmode\hskip-\@tempdima
      \ifcase #1
       \or\or \hskip 1em \or \hskip 2em \else \hskip 3em \fi%
      #6\nobreak\relax
    \hfill\hbox to\@pnumwidth{\@tocpagenum{#7}}\par
    \nobreak
    \endgroup
  \fi}
\makeatother

\renewcommand{\mod}{\ \textrm{mod}\ }

\newcommand{\Z}{\mathbb{Z}}
\newcommand{\Q}{\mathbb{Q}}

\newcommand{\F}{\mathbb{F}}





\newcommand{\m}{\mathfrak{m}}
\newcommand{\n}{\mathfrak{n}}

\newcommand{\perf}{\mathrm{perf}}

\newcommand{\Proj}{\mathrm{Proj}}
\newcommand{\wt}{\widetilde}

\def\var{\overline}

\DeclareMathOperator{\Spec}{Spec}

\DeclareMathOperator{\Hom}{Hom}

\DeclareMathOperator{\Ker}{Ker}
\DeclareMathOperator{\reg}{reg}

\DeclareMathOperator{\fpt}{fpt}

\DeclareMathOperator{\ppt}{ppt}

\renewcommand{\Im}{\mathrm{Im}}

\newcommand{\sQ}[2]{\ensuremath{Q_{#1,\left(#2\right)}}}
\newcommand{\sPhi}[2]{\ensuremath{\Phi_{#1,\left(#2\right)}}}

\newtheorem{theoremA}{Theorem}

\theoremstyle{remark}
\newtheorem*{ackn}{Acknowledgements}

\theoremstyle{plain}

\newcommand{\bs}{\boldsymbol{s}}

\numberwithin{equation}{section}

  \newsavebox{\pullbackdl}
\sbox\pullbackdl{%
\begin{tikzpicture}%
\draw (-1ex,0ex) -- (0ex,0ex);%
\draw (0ex,-1ex) -- (0ex,0ex);%
\end{tikzpicture}}

\newsavebox{\pushoutdr}
\sbox\pushoutdr{%
\begin{tikzpicture}%
\draw (-1ex,-1ex) -- (-1ex,0ex);%
\draw (-1ex,0ex) -- (0ex,0ex);%
\end{tikzpicture}}




\usepackage[capitalise,noabbrev]{cleveref}
\usepackage{thmtools}

\declaretheorem[name=Theorem,numberwithin=section]{theorem}
\declaretheorem[sibling=theorem,name=Lemma]{lemma}
\declaretheorem[sibling=theorem,name=Proposition]{proposition}
\declaretheorem[sibling=theorem,name=Corollary]{corollary}

\declaretheorem[sibling=theorem,style=definition,name=Definition]{definition}
\declaretheorem[sibling=theorem,style=definition,name=Example]{example}
\declaretheorem[sibling=theorem,style=definition,name=Notation]{notation}
\declaretheorem[sibling=theorem,style=remark,name=Remark]{remark}
\declaretheorem[sibling=theorem,style=definition,name=Question]{question}

\makeatletter

\makeatother

\title[A Criterion for Perfectoid Purity and the Rationality of Thresholds
]{A Criterion for Perfectoid Purity and the Rationality of Thresholds
}
\author{Shou Yoshikawa}
\address{Institute of Science Tokyo, Tokyo 152-8551, Japan}
\email{yoshikawa.s.9fe9@m.isct.ac.jp}

\begin{document}

\begin{abstract}
We introduce a new criterion providing a sufficient condition for a hypersurface in an unramified regular local ring to be \emph{perfectoid pure}.  
The criterion is formulated in terms of an explicitly computable sequence of integers, called the \emph{splitting-order sequence}.  
Our main theorem shows that if all entries of the sequence are at most $p-1$, then the hypersurface is perfectoid pure, and the perfectoid pure threshold can be computed explicitly from it.  
As a consequence, we prove that for any regular local ring $R$, the perfectoid pure threshold $\ppt(R,p)$ with respect to $p$ is always a rational number. 
Moreover, we show that for sufficiently large primes $p$, the cone over a Fermat-type Calabi--Yau hypersurface is perfectoid pure, revealing new and unexpected examples of perfectoid pure singularities.

\end{abstract}

\maketitle

\section{Introduction}

The theory of singularities in mixed characteristic has recently undergone remarkable development, as seen in works such as \cite{MSTWW}, \cite{BMPSTWW24}, and \cite{HLS}. 
This line of research has produced profound applications in both algebraic geometry and commutative algebra, including the mixed characteristic minimal model program \cite{BMPSTWW23}, \cite{TY23}, and Skoda-type theorems \cite{HLS}.

In this paper, we focus on the class of singularities known as \emph{perfectoid purity} \cite{p-pure}, 
which can be regarded as an analogue of $F$-purity in positive characteristic.  
It is known that if $(R,\m)$ is a Noetherian local ring with $p \in \m$, and if $R$ is a complete intersection such that $R/pR$ is quasi-$F$-split, 
then $R$ is perfectoid pure (\cite{Yoshikawa25}). 
On the other hand, in positive characteristic, 
the celebrated \emph{Fedder-type criterion} \cite{kty} provides a practical method for testing quasi-$F$-purity for complete intersections, 
leading to a wealth of explicit examples.

\medskip

In this paper, we establish a new numerical criterion that provides a sufficient condition for a hypersurface defined in an unramified regular local ring to be perfectoid pure.  
This criterion is expressed in terms of an explicitly computable sequence of integers, called the \emph{splitting-order sequence}.  
This invariant generalizes the quasi-$F$-split height introduced in \cite{Yoshikawa25}, and provides both a concrete criterion for perfectoid purity and a $p$-adic expansion of the perfectoid pure threshold.  
As a corollary, we show that for any regular local ring $R$, the perfectoid pure threshold $\ppt(R,p)$ is always a rational number.  
This rationality result may be viewed as a partial analogue of \cite{BMS}*{Theorem~3.1}, which proves that for a regular local ring of positive characteristic and any element $f$, the $F$-pure threshold \(\fpt(R,f)\) is rational.

We now fix notation and recall the notion of perfectoid purity.
Let $k$ be a perfect field and set
\[
A := W(k)[[x_1, \ldots, x_N]]
\]
with a Frobenius lift $\phi$ given by $\phi(x_i) = x_i^p$ for $1 \le i \le N$. 
For $a \in A$, set
\[
\Delta(a):=\frac{a^p-\phi(a)}{p}.
\]
Let $f \in A$ be such that $(p, f)$ forms a regular sequence.
We say that $A/f$ is \emph{perfectoid pure} if there exists a pure extension $A/f \to P$ to a perfectoid ring.
If $A/f$ is perfectoid pure, the \emph{perfectoid pure threshold  $\ppt(A/f,p)$ with respect to $p$} is defined as
\[
\sup \left\{\,\frac{i}{p^e} \geq 0 \ \middle|\
\begin{array}{l}
\text{there exist a perfectoid $A/f$-algebra $P$ and a $p^e$-th root $\varpi \in P$ of $p$}\\
\text{such that the map } A/f \to P \xrightarrow{\cdot \varpi^i} P \text{ is pure}
\end{array}
\right\},
\]
see \cref{good-perfd-cover} for details.  
It is known \cite{Yoshikawa25}*{Theorem~A} that if $\var{A/f}$ is quasi-$F$-split, then $A/f$ is perfectoid pure and $\ppt(A/f,p) > \frac{p-2}{p-1}$.

Next, we define the \emph{splitting-order sequence}.
Fix a generator \(u \in \Hom_{\var{A}}(F_*\var{A},\var{A})\) and set $\m^{[p^e]}:=(p, x^{p^e} \mid x \in \m)$.
For integers \(0 \le l_1, \ldots, l_{n-1} \le p-1\) and \(0 \le l_n \le p\),
we define ideals \(I(l_1,\ldots,l_n)\) of \(\var{A}\) inductively by
\begin{itemize}
  \item \(I(l_n) := f^{p-l_n}\var{A}\);
  \item once \(I(l_2,\ldots,l_n)\) is defined, set
  \[
    I(l_1,\ldots,l_n)
      := f^{p-l_1-1}\,u\bigl(F_*(\Delta(f)^{l_1}I(l_2,\ldots,l_n))\bigr)
         + f^{p-l_1}\var{A}.
  \]
\end{itemize}
We then define the \emph{splitting-order sequence} 
\(\bs(f) = (s_0,s_1,\ldots)\) of \(f\) with values in \(\{0,\ldots,p\}\)
inductively as follows:
\begin{itemize}
  \item set \(s_0 := 0\);
  \item once \(s_i = p\) for some \(i\), set \(s_n = p\) for all \(n > i\);
  \item suppose \(s_0,\ldots,s_{n-1}\) have been defined with \(s_1,\ldots,s_{n-1} \le p-1\),
  and define
  \[
  s_n := \max\bigl\{\,0 \le s \le p \,\bigm|\,
  I(s_1,\ldots,s_{n-1},s) \subseteq \m^{[p]}\,\bigr\}.
  \]
\end{itemize}
For alternative formulations of the splitting-order sequence, see \cref{splitting-order sequence},  
and for the equivalence among them, see \Cref{criterion}.
Furthermore, the splitting-order sequence does not depend on the choice of the Frobenius lift by \Cref{sos-vs-polygon}.

\noindent
The splitting-order sequence serves as a perfectoid analogue of a Fedder-type criterion in the theory of \(F\)-singularities.  
Our first main theorem shows that if all \(s_n \le p-1\), then \(A/f\) is perfectoid pure, and moreover it expresses $\ppt(A/f,p)$ explicitly in terms of \(\bs(f)\).

\begin{theoremA}[cf.~\cref{order-to-purity}]\label{intro:order-to-p-pure}
In the above setting, if \(s_n \le p-1\) for every \(n \ge 1\),
then \(A/f\) is perfectoid pure with 
\[
\ppt(A/f,p)=\sum_{n\ge1}\frac{p-1-s_n}{p^n}.
\]
\end{theoremA}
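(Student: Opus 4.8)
The plan is to reduce the statement to a Fedder-type criterion that rephrases purity of the $\varpi$-twisted maps $A/f\to B\xrightarrow{\cdot\varpi^{i}}B$ in terms of whether the iterated ideals $I(l_{1},\dots,l_{n})$ of $\var{A}=A/pA$ lie in $\m^{[p]}$, and then to extract $\ppt(A/f,p)$ from $\bs(f)$ by a short computation with $p$-adic digits. We may assume $f\in\m$, so that $f^{p}\var{A}\subseteq\m^{[p]}$. Write $s_{1},s_{2},\dots$ for the entries of $\bs(f)$; note that $s_{1}\le p-1$ automatically, since $I(p)=\var{A}\not\subseteq\m^{[p]}$.

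The first, and hardest, step is the criterion itself. Fix a sufficiently large perfectoid $A/f$-algebra $B$ --- the perfectoidization of $A$ obtained by adjoining compatible $p$-power roots of $p$ and of the variables $x_{i}$, then completing and dividing by $f$ (still a nonzerodivisor because $(p,f)$ is a regular sequence) --- together with its canonical system $(\varpi_{n})$ of $p^{n}$-th roots of $p$, and write $\iota\colon A/f\to B$ for the structure map. A key input, for which I would appeal to \cite{p-pure} and \cite{Yoshikawa25}, is that this cover is \emph{worst-case}: the supremum defining $\ppt$ is already computed by analyzing purity of $\cdot\varpi_{n}^{i}\circ\iota$. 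Since $A$ is regular, purity of this map is detected by Matlis/Grothendieck duality; unwinding the perfectoid Frobenius on $B$ one layer at a time turns the resulting dual pairing into an $n$-fold composite of Cartier-type operators built from the fixed generator $u\in\Hom_{\var{A}}(F_{*}\var{A},\var{A})$, with the operator $\Delta$ entering because the lift $\phi$ differs from the $p$-th power by $p\Delta$ and with the powers $\Delta(f)^{l_{1}}$ recording the digits of the twist. The outcome should be: writing $i\in\{0,\dots,p^{n}\}$ in its (redundant) base-$p$ form $i=\sum_{j=1}^{n}e_{j}p^{\,n-j}$ with $0\le e_{j}\le p-1$ for $j<n$ and $0\le e_{n}\le p$, the map $\cdot\varpi_{n}^{i}\circ\iota$ is pure if and only if
\[
I\bigl(p-1-e_{1},\dots,p-1-e_{n-1},p-e_{n}\bigr)\not\subseteq\m^{[p]}.
\]
Establishing this equivalence --- checking that the one-step unwinding reproduces the recursion for $I(l_{1},\dots,l_{n})$ on the nose, including the error term $f^{\,p-l_{1}}\var{A}$ that records $p$-adic carries and the enlarged range $0\le l_{n}\le p$ of the bottom slot (which is exactly what lets one pass from the extremal $i$ to $i+1$ without carrying), and pinning down the correct cover $B$ --- is where the real work lies, and I expect it to be the main obstacle.

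Granting the criterion, the remainder is routine. First, purity of $\cdot\varpi_{n}^{i}\circ\iota$ is monotone in $i$: since $\cdot\varpi_{n}^{i}\circ\iota=(\cdot\varpi_{n}^{i-i'})\circ(\cdot\varpi_{n}^{i'}\circ\iota)$ and purity of a composite forces purity of its first factor, purity at $i$ implies purity at every $i'\le i$. Hence for each $n$ there is a largest $i$, say $\mu_{n}$, with $\cdot\varpi_{n}^{i}\circ\iota$ pure. Put $m_{n}:=\sum_{j=1}^{n}(p-1-s_{j})p^{\,n-j}$; by hypothesis $0\le s_{j}\le p-1$, so the $p-1-s_{j}$ are legitimate base-$p$ digits, $m_{n}$ has base-$p$ expansion $(p-1-s_{1},\dots,p-1-s_{n})$, and $m_{n}+1$ has the (redundant) expansion $(p-1-s_{1},\dots,p-1-s_{n-1},\,p-s_{n})$, no carry being needed precisely because the last slot admits the value $p$. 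Feeding these into the criterion, $\cdot\varpi_{n}^{m_{n}}\circ\iota$ is pure iff $I(s_{1},\dots,s_{n-1},s_{n}+1)\not\subseteq\m^{[p]}$, which holds by maximality of $s_{n}$ (here $s_{n}+1\le p$ is used), whereas $\cdot\varpi_{n}^{m_{n}+1}\circ\iota$ is pure iff $I(s_{1},\dots,s_{n-1},s_{n})\not\subseteq\m^{[p]}$, which fails by the definition of $s_{n}$. Therefore $\mu_{n}=m_{n}$.

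Finally, $m_{n}/p^{n}=\sum_{j=1}^{n}(p-1-s_{j})/p^{j}$ is non-decreasing in $n$ (the increment is $(p-1-s_{n+1})/p^{n+1}\ge0$), so its supremum equals its limit, and since $B$ is worst-case
\[
\ppt(A/f,p)=\sup_{n\ge1}\frac{\mu_{n}}{p^{n}}=\lim_{n\to\infty}\sum_{j=1}^{n}\frac{p-1-s_{j}}{p^{j}}=\sum_{n\ge1}\frac{p-1-s_{n}}{p^{n}}.
\]
In particular $\mu_{1}=p-1-s_{1}\ge0$, so $\cdot\varpi_{1}^{\mu_{1}}\circ\iota$ is pure; its first factor $\iota$ is then pure, so $A/f$ is perfectoid pure. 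The reverse inequality $\ppt(A/f,p)\le\sum_{n}(p-1-s_{n})/p^{n}$ is built into the argument: any pure twisted map $A/f\to B'\xrightarrow{\cdot\varpi^{i}}B'$ into a perfectoid ring is, by the worst-case property of $B$, governed by the same criterion and hence by the same bound $i\le\mu_{e}$.
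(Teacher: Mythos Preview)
Your argument rests entirely on the claimed biconditional
\[
\text{$\cdot\varpi_n^{i}\circ\iota$ is pure}\ \Longleftrightarrow\ I\bigl(p-1-e_1,\ldots,p-1-e_{n-1},p-e_n\bigr)\not\subseteq\m^{[p]},
\]
which you flag as ``the main obstacle'' but only sketch. In fact this criterion, as stated for arbitrary $i$ and $n$, is \emph{false}. The same map can be read at two levels: at level $n$ with $e_n=0$ it corresponds to $I(l_1,\ldots,l_{n-1},p)$ (where $l_j=p-1-e_j$), while at level $n-1$ it corresponds to $I(l_1,\ldots,l_{n-2},l_{n-1}+1)$; your biconditional would force these two containments in $\m^{[p]}$ to agree. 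Applied along $\bs(f)$, since $I(s_1,\ldots,s_{n-2},s_{n-1}+1)\not\subseteq\m^{[p]}$ by the definition of $s_{n-1}$, this would give $I(s_1,\ldots,s_{n-1},p)\not\subseteq\m^{[p]}$ and hence $s_n\le p-1$ \emph{for every} $f$ --- so every hypersurface would be perfectoid pure, contradicted already by \cref{several-criteria}(2). Thus the ``if'' direction of your criterion (non-containment $\Rightarrow$ $\varpi$-purity) cannot hold for general tuples, and establishing it at the particular tuples coming from $\bs(f)$ is exactly the content of the theorem, not a lemma one can invoke.

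The paper's proof does not pass through such a criterion. The splitting-order sequence is defined via purity of the maps $\sPhi{f}{0,l_1,\ldots,l_n}$ into auxiliary modules $\sQ{f}{0,l_1,\ldots,l_n}$ (the $I$-ideal formulation is a separate equivalence, \cref{criterion}). For a perfect prism $(B,(d))$ one has $\sQ{d}{l_0,\ldots,l_n}\simeq\var{B}/T^{\,l_0+l_1/p+\cdots+l_n/p^n}$ (\cref{perfectoid-Q}); the upper bound $\ppt\le\alpha$ then follows by functoriality of $\sQ{\cdot}{\cdot}$, which converts $\sPhi{A,f}{s_0,\ldots,s_n}(\eta)=0$ into $p^{\alpha_n+1/p^n}\eta'=0$ on the socle class in local cohomology. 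The lower bound is the delicate direction: one uses \emph{both} that $\sPhi{f}{s_0,\ldots,s_{n-1},s_n}(\eta)=0$ and that $\sPhi{f}{s_0,\ldots,s_{n-1},s_n+1}(\eta)\neq0$ to manufacture an explicit \v{C}ech cocycle, and then transfers it to the perfectoid side via \cref{f-versus-y} --- a nontrivial computation showing that kernel elements of $\sQ{A,f}{0,l_1,\ldots,l_{n-1},p}$, after multiplication by $y^{1-1/p^{n-1}}$, land in $f\cdot\sQ{B,d}{0,l_1,\ldots,l_{n-1},1}$. This step genuinely exploits that $s_n$ is the exact threshold and does not produce a statement for arbitrary tuples.
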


\noindent
A natural open problem is whether the converse direction also holds.

\begin{question}\label{intro:question}
Does the converse direction of Theorem~\ref{intro:order-to-p-pure} hold?
That is, if \(A/f\) is perfectoid pure, must we have \(s_n \le p-1\) for all \(n \ge 1\)?
\end{question}

\noindent
We obtain a partial answer to this question in the following theorem,
which completely resolves the case \(p=2\) and the case where \(A/f\) is regular.
Moreover, since the splitting-order sequence is invariant modulo \(p^2\)
(see \cref{rmk:modulo-p^2}), 
we deduce that perfectoid purity and the perfectoid pure threshold are 
also invariant modulo \(p^2\).

\begin{theoremA}[cf.~\cref{case:p-2/p-1}]\label{intro:p-pure-to-order}
In the above setting, we have:
\begin{enumerate}
  \item If \(A/f\) is perfectoid pure with 
  \(\ppt(A/f,p) \ge \frac{p-2}{p-1}\),
  then \(s_n \le 1\) for every \(n \ge 1\).
  \item If \(s_1 = \cdots = s_r = p-1\) and \(s_{r+1} = p\) for some \(r \ge 2\),
  then \(A/f\) is not perfectoid pure.
  \item If \(A/f\) is regular, then \(s_n \le p-1\) for every \(n \ge 0\).
\end{enumerate}
In particular, if \(p=2\) or \(A/f\) is regular, 
then perfectoid purity and \(\ppt(A/f,p)\) are invariant modulo \(p^2\); 
that is, for any \(f' \in A\) with \(f \equiv f' \pmod{p^2}\), 
we have that \(A/f\) is perfectoid pure if and only if so is \(A/f'\), 
and moreover \(\ppt(A/f,p) = \ppt(A/f',p)\).
\end{theoremA}

\noindent
Combining Theorems~\ref{intro:order-to-p-pure} and~\ref{intro:p-pure-to-order},
we obtain the following fundamental result, which shows that
$\ppt(R,p)$ is always a rational number for a regular local ring $R$, and that the set of perfectoid pure thresholds of $p$ on regular local rings satisfies the ascending chain condition.

\begin{theoremA}[\cref{rationality}]\label{intro:rationality}
Let $d$ be a non-negative integer.
Then the following two sets coincide:
\begin{itemize}
    \item the set $\mathcal{P}_{d,p}$ of $\ppt(R,p)$ such that $(R,\m)$ is a regular local ring of dimension $d$ with $p \in \m$ and $R/pR$ is $F$-finite, and
    \item the set of $\fpt(R,f)$ such that $(R,\m)$ is a regular $F$-finite local ring of dimension $d$ of characteristic $p$, and $0 \neq f \in \m$.
\end{itemize}
In particular, the set $\mathcal{P}_{d,p}$ is contained in $\Q$ and satisfies the ascending chain condition.
\end{theoremA}

Furthermore, when $R$ is a hypersurface of a regular local ring $A$ defined by $f$, 
the $F$-pure threshold $\fpt(A,f)$ can be interpreted as an invariant of $R$ that measures how close $R$ is to being $F$-pure. 
Indeed, it is known that $\fpt(A,f)=1$ if and only if $R$ is $F$-pure, and that $\fpt(A,f)$ is independent of the choice of the presentation $R \simeq A/(f)$.
The following theorem provides a more intrinsic description of this invariant in terms of the perfectoid pure thresholds of lifts of $R$.

\begin{theoremA}[\Cref{sup-ppt-fpt}]\label{intro:lift-sup-ppt}
Let $(A,\m)$ be a regular local ring of characteristic $p$, $f \in \m$, and $R:=A/f$.
Then we have
\[
\fpt(A,f)=\sup\{\ppt(\wt{R},p) \mid \text{$\wt{R}$ is a lift of $R$}\}.
\]
\end{theoremA}

\noindent
By Theorem~\ref{intro:lift-sup-ppt}, it is natural to define the following invariant of a ring $R$ in positive characteristic, which measures how close $R$ is to being $F$-pure:
\[
\sup\{\ppt(\wt{R},p) \mid \text{$\wt{R}$ is a lift of $R$}\}.
\]

\noindent
Finally, in Section~6 we compute explicit examples that illustrate these results.
In particular, we show that perfectoid purity appears in a surprisingly broad range of situations,  
including cones over Fermat-type Calabi--Yau hypersurfaces.

\begin{example}[\cref{Fermat-CY,Fermat-K3-surface,example3}]

\begin{enumerate}
    \item Let \(f = x_1^N + \cdots + x_p^N\). 
    If $p>N$, then  \(A/f\) is perfectoid pure.
    \item Let \(N=4\), $p \equiv 3 \pmod{4}$, and \(f=x_1^{4}+\cdots+x_{4}^{4}\). 
    Then \(A/f\) is perfectoid pure with \(\ppt(A/f,p)=2/(p^2-1)\).
    In particular, the lift of the Fermat-type K3 cone for \(p=3\) is perfectoid pure; 
    such a K3 surface has Artin invariant \(1\).
    \item Let \(p=2\), \(N=4\), and
    \[
    f = x_1^4 + x_2^4 + x_3^4 + x_4^4
      + x_1^2x_2^2 + x_1^2x_3^2 + x_2^2x_3^2
      + x_1x_2x_3(x_1 + x_2 + x_3)+px_1x_2x_3x_4,
    \]
    then \(A/f\) is perfectoid pure with \(\ppt(A/f,p)=0\).
    Moreover, 
    \[
    \Proj(\var{k[x_1,x_2,x_3,x_4]/f})
    \]
    is an RDP K3 surface of Artin invariant \(1\) by \cite{DK03}*{Theorem~1.1(vii)}.
\end{enumerate}
\end{example}

\noindent
These computations show that perfectoid purity can occur even in unexpected geometric settings, 
highlighting the richness of this class of singularities.  
Our approach therefore provides a new and effective computational framework for studying perfectoid analogues of $F$-singularity invariants.

\begin{ackn}
The author wishes to express his gratitude to Teppei Takamatsu and Linquan Ma  for valuable discussions.
He is also grateful to Vignesh Jagathese for helpful comments.
The author was supported by JSPS KAKENHI Grant number JP24K16889.
\end{ackn}

\section{Preliminaries}
In this section, we collect the basic definitions and notations used throughout the paper. We recall the conventions for $\delta$-rings, Frobenius lifts, and perfectoid structures, which will serve as the foundation for later sections.

\subsection{Notation and conventions
}\label{ss-notation}
In this subsection, we summarize the notation and terminology used throughout this paper.

\begin{enumerate}
\item We fix a prime number $p$ and set $\F_p := \Z/p\Z$.  
\item For a ring $R$, we set $\var{R}:=R/pR$.
\item For a ring $R$ of characteristic $p>0$, we denote by $F \colon R \to R$ the absolute Frobenius ring homomorphism, defined by $F(r) = r^p$ for all $r \in R$.
\item For a ring $R$ of characteristic $p>0$, we say that $R$ is {\em $F$-finite} if $F$ is finite. 
\item For a ring $R$ of characteristic $p>0$, we define the \emph{perfection} of $R$ as $R_{\mathrm{perf}} := \mathrm{colim}_F R$.
\item We use terminology \emph{$\delta$-ring} as defined in \cite{BS}*{Definition~2.1}. 
For a $\delta$-ring $A$, the delta map is denoted by $\delta$ and set $\Delta:=-\delta$.
\item We use the terminology \emph{perfectoid} as defined in \cite{BMS19}*{Definition~3.5}. For a semiperfectoid ring $R$, its \emph{perfectoidization} is denoted by $R_{\mathrm{perfd}}$ (cf.~\cite{BS}*{Section~7}).
\item Let $R$ be a ring and $f$ an element of $R$.
A sequence $\{f^{1/p^e}\}_{e \geq 0}$ is called \emph{a compatible system of $p$-power roots of $f$} if $(f^{1/p^{e+1}})^p=f^{1/p^e}$ for all $e \geq 0$ and $f^{1/p^0}=f$.
Furthermore, the ideal $(f^{1/p^e} \mid e \geq 0)$ of $R$ is denoted by $(f^{1/p^{\infty}})$.
\item For a ring $R$ and a ring homomorphism $\phi \colon R \to R$, we say that $\phi$ is a \emph{Frobenius lift} if $\phi(a) \equiv a^p \mod p$ for every $a \in R$.
\item For a ring homomorphism $f \colon A \to B$, we say that $f$ is \emph{pure} if for every $A$-module $M$, the homomorphism $M \to M \otimes_A B$ is injective. 
\end{enumerate}

\subsection{Frobenius lifts and perfectoid covers}
We construct certain perfectoid covers associated with a Frobenius lift on a regular local ring. This technical result will later be used to connect the splitting-order sequence to perfectoid purity.

\begin{proposition}\label{unramified-vs-F-lift}
Let \((A,\m)\) be a complete regular local ring with \(0 \ne p \in \m\).
Then \(A\) is unramified if and only if \(A\) admits a Frobenius lift. 
\end{proposition}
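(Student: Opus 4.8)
The plan is to run the two implications separately, using the Cohen structure theorem to reduce the forward direction to an explicit model, while the reverse direction is a short filtration argument. Throughout, \emph{unramified} means $p \notin \m^2$, equivalently that $p$ is part of a regular system of parameters.

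For the implication ``unramified $\Rightarrow$ Frobenius lift'': by Cohen's structure theorem, a complete regular local ring of mixed characteristic with $p \notin \m^2$ is isomorphic to $C(k)[[x_1,\dots,x_n]]$, where $k = A/\m$, $C(k)$ is the Cohen ring of $k$ (so $C(k) = W(k)$ when $k$ is perfect), and $n = \dim A - 1$; I would include the short argument that $p \notin \m^2$ forces the coefficient DVR supplied by Cohen to have $p$ as a uniformizer, so that it is $C(k)$. Next, I would observe that $C(k)$ admits a Frobenius lift $\phi_0$: for $k$ perfect this is the Witt-vector Frobenius, and in general it follows from the fact that $C(k)$ is formally smooth over $\Z_p$ for the $p$-adic topology, so the composite $C(k) \to C(k)/p = k \xrightarrow{F} k$ lifts compatibly modulo every power of $p$, hence (by $p$-adic completeness) to a ring map $\phi_0 \colon C(k) \to C(k)$ with $\phi_0(a) \equiv a^p \pmod p$. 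Finally I would extend $\phi_0$ to $A$ by the universal property of the power series ring, setting $\phi|_{C(k)} = \phi_0$ and $\phi(x_i) = x_i^p$, and check that $\phi(a) \equiv a^p \pmod p$ for every $a \in A$ by writing $a = \sum_\alpha c_\alpha x^\alpha$ and using that $u \mapsto u^p$ is additive and multiplicative modulo $p$, together with $\phi_0(c_\alpha) \equiv c_\alpha^p \pmod p$.

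For the converse, suppose $\phi\colon A \to A$ is a Frobenius lift and assume for contradiction that $p \in \m^2$. Since $\phi$ is a unital ring homomorphism it fixes the prime subring, so $\phi(p) = p$. For any $x \in \m$ we have $\phi(x) - x^p \in pA$; since $x^p \in \m^p \subseteq \m^2$ (as $p \ge 2$) and $pA \subseteq \m^2$ (as $p \in \m^2$), we get $\phi(\m) \subseteq \m^2$, and hence $\phi(\m^e) \subseteq \m^{2e}$ for every $e \ge 1$ because $\phi$ is multiplicative. By the Krull intersection theorem and $p \ne 0$ there is a finite $e \ge 2$ with $p \in \m^e \setminus \m^{e+1}$; then $p = \phi(p) \in \phi(\m^e) \subseteq \m^{2e} \subseteq \m^{e+1}$, a contradiction. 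Therefore $p \notin \m^2$, i.e.\ $A$ is unramified.

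The only genuinely nonformal ingredient is the existence of a Frobenius lift on the Cohen ring $C(k)$ when $k$ is imperfect, for which I would cite formal smoothness of Cohen rings over $\Z_p$ (in the perfect case this is just the classical Witt-vector Frobenius, and one could even restrict the whole proposition to that case if desired); every other step is a routine manipulation of the $\m$-adic filtration and an application of the Cohen structure theorem.
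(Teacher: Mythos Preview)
Your proof is correct. The forward direction (unramified $\Rightarrow$ Frobenius lift) is essentially identical to the paper's: both invoke the Cohen structure theorem to write $A \simeq C[[x_1,\dots,x_n]]$ for a complete DVR $C$ with uniformizer $p$, lift Frobenius on $C$, and extend by $x_i \mapsto x_i^p$. The paper simply cites Matsumura (Theorems~29.7 and~29.2) for these two steps, while you spell out the formal smoothness argument for the lift on $C(k)$.

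The converse direction is where the two proofs genuinely diverge. The paper argues that a Frobenius lift $\phi$ on a regular local ring is automatically flat, hence so is its reduction modulo $p$, which is the Frobenius on $\overline{A}$; Kunz's theorem then forces $\overline{A}$ to be regular, i.e.\ $A$ is unramified. Your argument bypasses both the flatness claim and Kunz entirely: from $p\in\m^2$ you deduce $\phi(\m)\subseteq\m^2$, hence $\phi(\m^e)\subseteq\m^{2e}$, and then the fixed $\m$-adic order of $p=\phi(p)$ gives an immediate contradiction. This is strictly more elementary, and in fact your filtration argument never uses regularity of $A$ at all---it shows that \emph{any} Noetherian local ring with $0\ne p\in\m$ admitting a Frobenius lift must have $p\notin\m^2$. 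The paper's route, by contrast, is more conceptual and ties the result to the regularity of $\overline{A}$ via Kunz, at the cost of invoking a substantial theorem.
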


\begin{proof}
First, assume that \(A\) admits a Frobenius lift \(\phi\).
Since \(A\) is regular, \(\phi\) is flat.
Reducing modulo \(p\), we obtain that the Frobenius map
\[
F \colon \var{A} \longrightarrow F_*\var{A}
\]
is flat.
By Kunz’s theorem, this implies that \(\var{A}\) is regular.
Hence \(A\) is unramified.

Conversely, assume that \(A\) is unramified.
Then, by \cite{Matsumura}*{Theorem~29.7}, we have
\[
A \simeq C[[x_1, \ldots, x_N]]
\]
for some complete discrete valuation ring \(C\) with uniformizer \(p\).
By \cite{Matsumura}*{Theorem~29.2}, the ring \(C\) admits a Frobenius lift,
and therefore so does \(A\).
\end{proof}

\begin{proposition}\label{good-perfd-cover}
Let \((A,\m)\) be a regular ring with a finite Frobenius lift \(\phi\), and assume \(0 \neq p \in \m\).
Let \(f \in A\).
Then there exists a morphism of \(\delta\)-rings \(A \to B\) to a perfect prism \((B,(d))\) such that:
\begin{itemize}
  \item \(A \to B/(d)\) is faithfully flat;
  \item \(B/(d)\) contains compatible systems of \(p\)-power roots of \(f\) and \(p\);
  \item we have
  \[
    \ppt(A/f,p)
      = \sup\bigl\{
          \alpha \in \Z[1/p]_{\ge 0}
          \,\big|\,
          A/f \to B/(d,f^{1/p^\infty})
            \xrightarrow{\cdot p^\alpha}
            B/(d,f^{1/p^\infty})
          \text{ is pure}
        \bigr\}.
  \]
\end{itemize}
\end{proposition}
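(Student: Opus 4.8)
The plan is to reduce to the case where $A$ is $p$-adically complete (replacing $A$ by $A^\wedge_p$ changes neither side: $A\to A^\wedge_p$ is faithfully flat, a perfectoid ring is automatically $p$-complete, and one checks directly that purity into perfectoid algebras and hence $\ppt$ is insensitive to this completion), and then to build $(B,(d))$ explicitly as the perfect prism attached to a perfectoid, faithfully flat cover of $A$ in which $p$ and $f$ have been given compatible systems of $p$-power roots. After the reduction $(A,(p))$ is a crystalline prism, and by \cref{unramified-vs-F-lift} the ring $A$ is unramified; I fix a regular system of parameters, so $\m=(p,x_1,\dots,x_N)$, together with units $t_1,\dots,t_m\in A$ lifting a $p$-basis of the ($F$-finite) residue field.

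Next I construct the cover. Let $A_\infty$ be the $\m$-adic completion of the ring obtained from $A$ by adjoining compatible systems of $p$-power roots of $p$, of the $x_i$, and of the $t_j$. Each finite stage is a module-finite free extension of a subring, so $A\to A_\infty$ is faithfully flat (flatness survives the filtered colimit and $\m$-adic completion over the Noetherian ring $A$), and $A_\infty$ is perfectoid: it is $p$-torsion-free and $p$-complete, the kernel of $\theta$ is principal by the usual computation, and Frobenius modulo $p^{1/p}$ is surjective because the adjoined roots together with the lifted $p$-basis generate. I then adjoin a compatible system $f^{1/p^\infty}$, complete $\m$-adically, and perfectoidize, obtaining $S:=\bigl(A_\infty\langle f^{1/p^\infty}\rangle\bigr)_{\mathrm{perfd}}$. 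The point requiring care — the first technical obstacle — is that $A\to S$ remains faithfully flat, since perfectoidization is not flat in general; this is an instance of André's flatness lemma (the perfectoid Abhyankar lemma, cf.\ \cite{BMPSTWW24}, \cite{BS}) applied to the nonzerodivisor $f$, and can alternatively be verified after tilting, where the extension becomes the perfection of an explicit characteristic-$p$ extension of $\overline{A}$ and flatness follows from the regularity of $\overline{A}$.

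With $S$ in hand, set $(B,(d)):=\bigl(A_{\mathrm{inf}}(S),\ker\theta\bigr)=\bigl(W(S^\flat),\ker\theta\bigr)$, the perfect prism attached to the perfectoid ring $S$ as in \cite{BMS19}, so that $B/(d)=S$; then the first two bullet points are precisely the properties of $S$ recorded above. To produce the $\delta$-ring map $A\to B$, I use that the crystalline prism $(A,(p))$ maps to its perfection $(A_{\perf},(p))=\bigl(W(\overline{A}_{\perf}),(p)\bigr)$: the ring homomorphism $\overline{A}\to S^\flat$ sending each $x_i$ and each $t_j$ to the tilt of its chosen compatible system, extended by $\m$-adic completeness, factors through $\overline{A}_{\perf}$, and the adjunction between the forgetful functor and $p$-typical Witt vectors turns it into a $\delta$-ring map $W(\overline{A}_{\perf})\to W(S^\flat)=B$; composing with $A\to A_{\perf}$ gives $A\to B$, and unravelling the construction shows this map reduces modulo $d$ to the structure map $A\to S=B/(d)$.

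Finally, for the threshold formula: $B/(d,f^{1/p^\infty})=S/(f^{1/p^\infty})$ is again perfectoid, since the quotient of a perfectoid ring by the ideal generated by a compatible system of $p$-power roots of a nonzerodivisor is perfectoid — its tilt is $S^\flat/(f^{\flat,1/p^\infty})$, which is perfect — and it is a nonzero $A/f$-algebra containing $p^{1/p^\infty}$, so every $\alpha$ on the right-hand side is among the values allowed in the definition of $\ppt$, giving the inequality $\ge$. For the reverse inequality one must show $B/(d,f^{1/p^\infty})$ is large enough to detect every perfectoid $A/f$-algebra: given a perfectoid $A/f$-algebra $B'$, a $p^e$-th root $\varpi$ of $p$ in $B'$, and a pure map $A/f\xrightarrow{\cdot\varpi^i}B'$, I pass to a common perfectoid $A/f$-algebra $C$ — the perfectoidization of a suitable $\m$-adically completed tensor product of $B'$ with $B/(d,f^{1/p^\infty})$, with the two chosen systems of $p$-power roots of $p$ identified — and transport purity along the faithfully flat, hence pure, structure maps provided by the faithful flatness of $A\to B/(d)$, concluding that $A/f\xrightarrow{\cdot p^{i/p^e}}B/(d,f^{1/p^\infty})$ is pure; taking suprema yields equality. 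Making this last step rigorous, in particular controlling how purity behaves under the perfectoidization, is the heart of the argument and the principal obstacle; it is the perfectoid counterpart of the statement that the $F$-pure threshold is computed by Frobenius itself, and I would model it on the comparison results of \cite{p-pure} and \cite{Yoshikawa25}.
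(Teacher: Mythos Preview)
Your construction is broadly in the same spirit as the paper's, but the paper executes both the construction of $(B,(d))$ and the threshold formula more cleanly, and in a way that dissolves exactly the two obstacles you flag.

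For the prism, instead of choosing a regular system of parameters and a $p$-basis to build $A_\infty$ by hand, the paper sets $A':=A[X_0]$ with $\phi'(X_0)=X_0^p$, takes $B':=(A'_{\perf})^{\wedge(p,d)}$ with $d:=X_0-p$, and checks that $(B',(d))$ is a perfect prism with $A\to B'/(d)$ $p$-completely faithfully flat. The $\delta$-ring map $A\to B'$ is then automatic (it is the structure map into the colimit), so your somewhat delicate construction of $A\to W(S^\flat)$ via a tilt of $\overline{A}$ is unnecessary. Roots of $f$ and $p$ are then obtained by a single black-box application of Andr\'e's flatness lemma, producing a $(p,d)$-faithfully flat map of perfect prisms $(B',(d))\to(B,(d))$; this sidesteps your ``first technical obstacle'' of showing that adjoining $f^{1/p^\infty}$ and perfectoidizing preserves faithful flatness, which is not what Andr\'e's lemma literally says and which you do not really justify.

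For the threshold formula, which you correctly identify as the heart of the matter, the paper's argument is short and does not require controlling purity through a perfectoidized tensor product. Given any perfectoid $A/f$-algebra $C$, replace $C$ by a $p$-completely faithfully flat extension in which every monic polynomial splits. Since $\phi'$ is \emph{finite}, the map $A'\to C$ sending $X_0\mapsto p$ extends over the colimit $A'_{\perf}$ and hence over $B'$; then $C':=C\,\widehat{\otimes}_{B'/(d)}B/(d)$ is $p$-completely faithfully flat over $C$ and receives a map from $B/(d)$. The reduction of the threshold comparison to this ``receives a map after faithfully flat extension'' statement is cited from \cite{Yoshikawa25}*{Proposition~2.9}. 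So the finiteness of the Frobenius lift, which you use only to get $F$-finiteness of the residue field, is in fact the key input that makes the universality argument go through without any perfectoidization subtleties.
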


\begin{proof}
Since \(A\) is regular, its Frobenius lift \(\phi\) is faithfully flat.
Set \(A' := A[X_0]\) and define a ring endomorphism
\(\phi' \colon A' \to A'\) by \(\phi'|_A = \phi\) and \(\phi'(X_0) = X_0^p\);
then \(A \to A'\) is a morphism of \(\delta\)-rings.
Define
\[
  B' := (A'_{\perf})^{\wedge (p,d)}
     := \bigl(\varinjlim_{\phi'} A'\bigr)^{\wedge (p,d)},
\]
so that \((B',(d := X_0 - p))\) is a perfect prism.
By construction, \(A \to B'\) is a morphism of \(\delta\)-rings, and
\(A \to B'/(d)\) is \(p\)-completely faithfully flat.

By André’s flatness lemma, there exists a \((p,d)\)-faithfully flat morphism
of perfect prisms \((B',(d)) \to (B,(d))\)
such that \(B/(d)\) contains compatible systems of \(p\)-power roots of both \(f\) and \(p\).
Hence \(A \to B/(d)\) is also \(p\)-completely faithfully flat.
By \cite{Bhatt20}*{Lemma~5.15}, this implies that
\(A \to B/(d)\) is faithfully flat.

We now verify the third condition.
Let \(C\) be a perfectoid \(A/f\)-algebra.
By the proof of \cite{Yoshikawa25}*{Proposition~2.9}, it suffices to show that,
after replacing \(C\) by an \(\m\)-completely faithfully flat extension,
there exists an \(A\)-algebra homomorphism \(B/(d) \to C\).

After such a base change, we may assume that \(C\) contains a root of every monic polynomial and is \(\m\)-adically complete.
Choose a compatible system \(\{p^{1/p^e}\}_e\) of \(p\)-power roots of \(p\) in \(C\).
Replacing \(A\) by its \(\m\)-adic completion, we may assume that \(A\) is also \(\m\)-adically complete.
By further base change, we may assume that \(A/\m\) is perfect.

Define an \(A\)-algebra homomorphism \(A' \to C\) by sending \(X_0 \mapsto p\).
Since \(A\) is \(p\)-adically complete, by taking lifts of a \(p\)-basis we can choose elements \(x_1,\ldots,x_N \in A\) such that
\(\{\phi_*(x_1^{e_1}\cdots x_N^{e_N}) \mid 0 \le e_1,\ldots,e_N \le p-1\}\)
forms a basis of \(\phi_*A\) over \(A\).

Let \(A_1 \subseteq \phi_*A\) be the \(A\)-submodule generated by \(1, x_1, \ldots, x_1^{p-1}\), then it is $A$-subalgebra.
Then there exists a monic polynomial \(P \in A[T]\) of degree \(p\)
such that \(A[T]/(P) \simeq A_1\) as \(A\)-algebras.
Taking a root \(\alpha_1\) of \(P\) in \(C\), we obtain an \(A\)-algebra homomorphism
\[
A_1 \simeq A[T]/(P) \xrightarrow{T \mapsto \alpha_1} C.
\]
Repeating this process, we construct an \(A\)-algebra homomorphism
\(\phi_*A \to C\).

We now define an \(A'\)-algebra homomorphism by
\[
\phi'_*A' \simeq \phi_*A[X_0^{1/p}] \xrightarrow{X_0^{1/p} \mapsto p^{1/p}} C.
\]
Iterating this procedure yields an \(A'\)-algebra homomorphism \(B' \to C\)
sending \(d\) to \(0\).

Finally, the natural map
\(C \to C' := C \widehat{\otimes}_{B'/(d)} B/(d)\)
is \(p\)-completely faithfully flat, and there exists an induced \(A\)-algebra
homomorphism \(B \to C'\), as desired.
\end{proof}

\section{Splitting-order sequences}
In this section, we introduce the notion of a splitting-order sequence, a numerical invariant encoding how far an element is from being Frobenius split. We establish its basic properties and functoriality, and relate it to quasi-$F$-splitting.

\begin{notation}\label{notation:delta-ring}
Let \(A\) be a \(\delta\)-ring, and set \(\Delta := -\delta\).
For \(f \in A\) and integers \(0 \le l_1, \ldots, l_{n-1} \le p-1\) and \(0 \le l_n \le p\), 
we define an \(\var{A}\)-module \(\sQ{A,f}{l_1,\ldots,l_n}\) by
\[
\sQ{A,f}{l_1,\ldots,l_n}:=\frac{\var{A/f^{\,p}} \oplus \cdots \oplus F^{n-1}_*\var{A/f^p}}{\Bigl( (a_1 f^{\,l_1}, -a_1^{p} \Delta(f)^{\,l_1}+a_2f^{l_2},  \ldots, -a_{n-1}^p\Delta(f)^{l_{n-1}}+a_nf^{l_n})
      \ \Bigm|\ a_1,\ldots,a_n \in \var{A} \Bigr)}
\]


We also define \(\var{A}\)-module homomorphisms
\[
  \sPhi{A,f}{l_1,\ldots,l_n} \colon 
  \var{A/f^{\,l_1+1}} \longrightarrow \sQ{A,f}{l_1,\ldots,l_n},
  \qquad
  V \colon F^{\,n-1}_* \var{A/f^{\,l_n}} \longrightarrow \sQ{A,f}{l_1,\ldots,l_n},
\]
as follows.
\begin{itemize}
  \item Consider the \(\var{A}\)-linear map 
  \(\var{A} \to \sQ{A,f}{l_1,\ldots,l_n}\), \(a \mapsto (a,0,\ldots,0)\).
  In \(\sQ{A,f}{l_1,\ldots,l_n}\) we have
  \[
    (f^{\,l_1+1}, 0, \ldots, 0)
    = (0, f^{\,p} \Delta(f)^{\,l_1}, 0, \ldots, 0)
    = 0
  \]
  by the defining relations.
  Hence the map factors through \(\var{A/f^{\,l_1+1}}\), yielding
  \[
    \sPhi{A,f}{l_1,\ldots,l_n} \colon 
    \var{A/f^{\,l_1+1}} \longrightarrow \sQ{A,f}{l_1,\ldots,l_n}, 
    \qquad a \longmapsto (a,0,\ldots,0).
  \]
  \item Similarly, the \(\var{A}\)-linear map 
  \(F^{\,n-1}_*\var{A} \to \sQ{A,f}{l_1,\ldots,l_n}\), \(a \mapsto (0,\ldots,0,a)\),
  factors through \(F^{\,n-1}_* \var{A/f^{\,l_n}}\), giving
  \[
    V \colon 
    F^{\,n-1}_* \var{A/f^{\,l_n}} 
    \longrightarrow \sQ{A,f}{l_1,\ldots,l_n}, 
    \qquad a \longmapsto (0,\ldots,0,a).
  \]
\end{itemize}

When the base ring \(A\) is clear from the context, 
we simply write \(\sQ{f}{l_1,\ldots,l_n}\) and 
\(\sPhi{f}{l_1,\ldots,l_n}\) in place of 
\(\sQ{A,f}{l_1,\ldots,l_n}\) and \(\sPhi{A,f}{l_1,\ldots,l_n}\), respectively.
\end{notation}

\begin{remark}\label{rmk:module-structure}
With the notation as in \cref{notation:delta-ring}, fix $f \in A$.
Let $0 \le l_1, \ldots, l_{n-1} \le p-1$ and $0 \le l_n \le p$.
\begin{itemize}
    \item The $\var{A}$-module structure on $\sQ{f}{0,l_1,\ldots,l_n}$ 
    induces a natural $\var{A/f}$-module structure.
    Indeed, we have
    \[
      f(a_0,\ldots,a_n)
      = (0,\,f^p(a_0^p + a_1),\,f^{p^2}a_2,\,\ldots,\,f^{p^n}a_n)
      = 0
    \]
    in $\sQ{f}{0,l_1,\ldots,l_n}$.
    In particular, the map $\sPhi{f}{0,l_1,\ldots,l_n}$ 
    is an $\var{A/f}$-module homomorphism.
    
    \item If $\var{A}$ is $F$-finite, then $\sQ{f}{l_1,\ldots,l_n}$ 
    is a finite $\var{A}$-module.
\end{itemize}
\end{remark}

\begin{proposition}\label{mod-p^2}
With notation as in \cref{notation:delta-ring}, fix $f,f' \in A$, 
and integers $0 \le l_1, \ldots, l_{n-1} \le p-1$ and $0 \le l_n \le p$.
If $f \equiv f' \pmod{p^2}$, then 
\[
\sQ{f}{l_1,\ldots,l_n} \;\simeq\; \sQ{f'}{l_1,\ldots,l_n}.
\]
\end{proposition}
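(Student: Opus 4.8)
The plan is to reduce everything modulo $p$ and observe that the module $\sQ{f}{l_1,\ldots,l_n}$ of \cref{notation:delta-ring} depends only on the images of $f$ and $\Delta(f)$ in $\var A$ (together with the fixed integers $l_1,\ldots,l_n$). Write $f' = f + p^2 g$ with $g \in A$. Since $\var A = A/pA$, already the weaker congruence $f \equiv f' \pmod{p}$ gives $\var{f} = \var{f'}$ in $\var A$, hence $(f^p, p) = ((f')^p, p)$ as ideals of $A$; consequently the summands $F^{\,i-1}_*\var{A/f^p}$ and $F^{\,i-1}_*\var{A/(f')^p}$ are literally the same $\var A$-module for each $i$, and the images of $f^{l_i}$ and $(f')^{l_i}$ in $\var A$ agree. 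Thus the whole statement reduces to comparing the images of $\Delta(f)$ and $\Delta(f')$ in $\var A$.

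The key step is therefore the congruence $\Delta(f) \equiv \Delta(f') \pmod{p}$. I would prove it directly from $\Delta(a) = \tfrac{a^p - \phi(a)}{p}$ (equivalently, $\Delta = -\delta$ and $\phi(a) = a^p + p\delta(a)$). Since $\phi$ is a ring homomorphism, $\phi(f') = \phi(f) + p^2\phi(g)$; and by the binomial theorem $(f')^p - f^p \in p^3 A$, because each cross term $\binom{p}{i}f^{p-i}(p^2 g)^i$ with $1 \le i \le p-1$ is divisible by $p^{1+2i}$ with $1+2i \ge 3$, while $(p^2 g)^p \in p^{2p}A \subseteq p^3 A$. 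Hence
\[
\Delta(f') - \Delta(f) = \frac{(f')^p - f^p}{p} - p\,\phi(g) \in pA ,
\]
so $\Delta(f)$ and $\Delta(f')$ have the same image in $\var A$. (Alternatively one may run this through the $\delta$-ring identity $\delta(x+y) = \delta(x) + \delta(y) + \tfrac{x^p + y^p - (x+y)^p}{p}$ together with $\delta(p^2 g) \equiv p g^p \pmod{p^2}$.)

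With these two observations, the conclusion is immediate: every generator $(a_1 f^{l_1},\, -a_1^{p}\Delta(f)^{l_1}+a_2 f^{l_2},\, \ldots,\, -a_{n-1}\Delta(f)^{l_{n-1}}+a_n f^{l_n})$ of the relation submodule is built only from the common images of $f$ and $\Delta(f)$ in $\var A$ and the integers $l_1,\ldots,l_n$, so the relation submodule attached to $f$ coincides with the one attached to $f'$ inside the common ambient module $\bigoplus_{i=1}^{n} F^{\,i-1}_*\var{A/f^p}$. Therefore $\sQ{f}{l_1,\ldots,l_n}$ and $\sQ{f'}{l_1,\ldots,l_n}$ are equal, in particular isomorphic.

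I expect the only real obstacle to be the mod-$p$ comparison of $\Delta(f)$ and $\Delta(f')$; it is worth stressing that the hypothesis $f \equiv f' \pmod{p^2}$ — and not merely mod $p$ — is exactly what that step requires, since for $f' = f + pg$ one only obtains $\Delta(f') \equiv \Delta(f) - g^p \pmod{p}$, which genuinely changes the module.
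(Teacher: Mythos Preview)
Your proof is correct and follows the same strategy as the paper: reduce to showing $\Delta(f)\equiv\Delta(f')\pmod p$, which you verify directly via the binomial expansion (the paper does it via the $\delta$-ring additivity formula $\Delta(f'+p^2g)\equiv\Delta(f')+\Delta(p^2g)\pmod p$ together with $\Delta(p^2g)\equiv 0\pmod p$). Your added explanation of why the ambient module and the relation submodule depend only on the images of $f$ and $\Delta(f)$ in $\var A$ makes explicit what the paper leaves implicit in ``It suffices to show\ldots''.
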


\begin{proof}
It suffices to show that $\Delta(f) \equiv \Delta(f') \pmod{p}$.
Write $f - f' = p^2 g$ for some $g \in A$.
Then
\[
\Delta(f) 
= \Delta(f' + p^2 g)
\equiv \Delta(f') + \Delta(p^2 g)
\equiv \Delta(f') \pmod{p},
\]
as desired.
\end{proof}

\begin{proposition}\label{compare-witt}
With notation as in \cref{notation:delta-ring}, fix \(f \in A\).
Then there exists an \(\var{A}\)-module isomorphism
\[
\var{W}_n(A/f) \;\xrightarrow{\ \simeq\ }\; \sQ{f}{1,\ldots,1},
\]
where the \(\var{A}\)-module structure on the left-hand side is given
via the ring homomorphism \(s_\phi \colon A \to W_n(A)\)
\textup{(cf.~\cite{Yoshikawa25}*{Proposition~3.2})}.
\end{proposition}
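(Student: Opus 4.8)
The plan is to prove the isomorphism by induction on $n$, realizing both sides as the middle term of a short exact sequence of $\var A$-modules and comparing the two sequences by the five lemma. Write $Q^{(m)}$ for $\sQ{f}{1,\ldots,1}$ with $m$ indices. For $n=1$ the single defining relation $a_1f$ gives $Q^{(1)}=\var{A/f^{p}}/f\var{A/f^{p}}=\var{A/f}$, while $\var{W}_1(A/f)=\var{A/f}$ with its standard structure (since $s_\phi$ in length $1$ is the identity), so the identity map works. For the inductive step I would use, on the Witt side, the reduction modulo $p$ of the Verschiebung sequence, and on the combinatorial side, the sequence obtained from the presentation in \cref{notation:delta-ring} by dropping the last coordinate, and then build a compatible vertical isomorphism.

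On the Witt side, the sequence $0\to F^{\,n-1}_*(A/f)\xrightarrow{V^{n-1}}W_n(A/f)\xrightarrow{R}W_{n-1}(A/f)\to 0$, with $R$ the truncation, is exact for every ring. Reducing modulo $p$ preserves right-exactness automatically, and it preserves left-exactness because $A/f$ is $p$-torsion free — $(p,f)$ being a regular sequence in the local ring $A$, so is $(f,p)$: if $V^{n-1}(r)\in pW_n(A/f)$, applying $R$ and using additivity of $V$ (hence $pV^{n-1}(r)=V^{n-1}(pr)$) forces $r\in p(A/f)$. Moreover $V^{n-1}$ is $\phi^{n-1}$-semilinear, via $F\circ s_\phi=s_\phi\circ\phi$ and the projection formula $V^{n-1}(x)y=V^{n-1}(x\,F^{n-1}(y))$, so it is $\var A$-linear for the $F^{\,n-1}_*$-structure on $W_1(A/f)=A/f$; and $R$ is $\var A$-linear since it is compatible with $s_\phi$ in lengths $n$ and $n-1$. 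This produces the exact sequence of $\var A$-modules $0\to F^{\,n-1}_*\var{A/f}\xrightarrow{V^{n-1}}\var{W}_n(A/f)\to\var{W}_{n-1}(A/f)\to 0$.

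On the combinatorial side, dropping the last coordinate gives an $\var A$-linear surjection $Q^{(n)}\to Q^{(n-1)}$, and I claim its kernel equals the image of the injective map $V\colon F^{\,n-1}_*\var{A/f}\to Q^{(n)}$ from \cref{notation:delta-ring}, yielding $0\to F^{\,n-1}_*\var{A/f}\xrightarrow{V}Q^{(n)}\to Q^{(n-1)}\to 0$. Both statements are elementary consequences of the facts that $f^{p-1}$ annihilates $f$ in $\var{A/f^{p}}$ and that $f^{p(p-1)}\in f^{p}A$: once the first $n-1$ entries of a relation vector $(a_1f,\ -a_1^{\,p}\Delta(f)+a_2f,\ \ldots,\ -a_{n-1}^{\,p}\Delta(f)+a_nf)$ are forced to vanish in the respective copies of $\var{A/f^{p}}$, each twisted term $-a_i^{\,p}\Delta(f)$ automatically vanishes modulo $f^{p}$, so the vector collapses to an element of the last slot lying in $f\var{A/f^{p}}$; this gives injectivity of $V$, and a parallel computation gives $\operatorname{im}V \supseteq \ker$.

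The delicate step — and the main obstacle — is to construct an isomorphism $\Theta_n\colon\var{W}_n(A/f)\xrightarrow{\ \sim\ }Q^{(n)}$ fitting into a morphism between the two short exact sequences above, with $\Theta_{n-1}$ on the quotients and a (possibly sign‑twisted) isomorphism on $F^{\,n-1}_*\var{A/f}$; the five lemma then forces $\Theta_n$ to be an isomorphism. By the general theory of extensions this amounts to checking that the two extension classes in $\Ext^1_{\var A}\bigl(\var{W}_{n-1}(A/f),F^{\,n-1}_*\var{A/f}\bigr)$ agree up to sign, and for this I would use the explicit description of $s_\phi$ from \cite{Yoshikawa25}*{Proposition~3.2}: it is the section of $W_n(A)\to A$ with ghost components $(a,\phi(a),\ldots,\phi^{\,n-1}(a))$, so the second Witt component of $s_\phi(f)$ is $\delta(f)=-\Delta(f)$. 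Already for $n=2$ the match is visible: lifting the generator of $\var{A/f}$ and multiplying by $f$ gives $s_\phi(f)\bmod(p,f)=V(-\overline{\Delta(f)})$ on the Witt side and $(f,0)=V(\overline{\Delta(f)})$ on the $Q$-side, agreeing up to a harmless sign; for general $n$ the same computation — using the projection formula for $V$, the identity $pV^i(x)=V^i(px)$, and the appearance of $\Delta(f)$ as the off-diagonal coefficient in the presentation — shows that the two sets of gluing data coincide, so $\Theta_n$ exists. (Alternatively, one may invoke a presentation of $W_n(A/f)$ from \cite{Yoshikawa25} and reduce it modulo $p$; the bookkeeping is identical.) The exactness statements are routine; the real content lies in this last identification, namely tracking how the element $\delta(f)=-\Delta(f)$ governing the $\delta$-structure reappears as the extension data of the truncated Witt vectors.
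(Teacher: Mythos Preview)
Your approach is genuinely different from the paper's: the paper writes down an explicit \(\var{A}\)-linear map \(\Psi'\colon \var{W}_n(A)\to \sQ{f}{1,\ldots,1}\) using the structure isomorphism \(\Psi\colon \var{W}_n(A)\xrightarrow{\sim}\var{A}\oplus F_*\var{A}\oplus\cdots\) from \cite{Yoshikawa25}*{Theorem~3.7}, checks directly that it kills \(\var{W}_n(fA)\), and then verifies surjectivity and injectivity by hand. No induction, no five lemma, no extension classes.

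There are two genuine gaps in your argument.

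\textbf{Extra hypotheses.} You assume \((p,f)\) is a regular sequence and that \(A\) is local; neither appears in \cref{notation:delta-ring} or the proposition. You use regularity twice: once to get left-exactness of the Witt sequence modulo \(p\) (via \(p\)-torsion-freeness of \(A/f\)), and once to get injectivity of \(V\) on the \(Q\)-side, which is exactly the content of \cref{exact-seq}, a result the paper states \emph{only} under the regular-sequence hypothesis. The paper's explicit map avoids both exact sequences entirely: once one observes that the relation submodule defining \(\sQ{f}{1,\ldots,1}\) coincides with the image of \(\var{W}_n(fA)\) under the lift of \(\Psi'\), injectivity reduces to the elementary check that \(\Psi(\gamma)\in (f^p\var{A})^n\) forces each Witt component of \(\gamma\) to lie in \(f\var{A}\), which needs no regularity.

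\textbf{The inductive step is circular.} The five lemma does not construct \(\Theta_n\); it only certifies that a \emph{given} map compatible with the two short exact sequences is an isomorphism. So the entire content lies in producing \(\Theta_n\), which you reduce to matching extension classes in \(\Ext^1_{\var{A}}\bigl(\var{W}_{n-1}(A/f),\,F^{\,n-1}_*\var{A/f}\bigr)\). Your \(n=2\) computation works because \(\var{A/f}\) (granting regularity) has the free resolution \(0\to\var{A}\xrightarrow{f}\var{A}\to\var{A/f}\to0\), making \(\Ext^1\) explicitly computable. For \(n\ge 3\) there is no such resolution of \(\var{W}_{n-1}(A/f)\), and your inductive hypothesis supplies only an \emph{abstract} isomorphism \(\Theta_{n-1}\), not one for which you can compute the pullback of the \(Q^{(n)}\)-class. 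The phrase ``the same computation\ldots shows that the two sets of gluing data coincide'' hides exactly this difficulty. To make it work you would have to track \(\Theta_{n-1}\) by an explicit formula compatible with the next level, at which point you are essentially rebuilding the paper's map \(\Psi'\) inductively; the five lemma then becomes redundant.
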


\begin{proof}
By \cite{Yoshikawa25}*{Theorem~3.7}, there is an \(A\)-module isomorphism
\[
\Psi \colon \var{W}_n(A)
  \xrightarrow{\ \simeq\ }
  \var{A} \oplus F_*\var{A} \oplus \cdots \oplus F^{n-1}_*\var{A},
\]
defined by
\[
\Psi(a_0,\ldots,a_{n-1})
  = \bigl(a_0,\,
     \Delta_1(a_0)+a_1,\,
     \ldots,\,
     \Delta_{n-1}(a_0)+\cdots+a_{n-1}\bigr).
\]

We define a map
\[
\Psi' \colon \var{W}_n(A) \longrightarrow \sQ{f}{1,\ldots,1}
\]
by
\[
\Psi'(a_0,\ldots,a_{n-1})
  = \bigl(a_0,\,
     -(\Delta_1(a_0)+a_1),\,
     \ldots,\,
     (-1)^{n-1}(\Delta_{n-1}(a_0)+\cdots+a_{n-1})\bigr).
\]
We show that \(\Psi'\) induces an isomorphism after passing to the quotient by \(W_n(f)\).

For any \((a_0,\ldots,a_{n-1}) \in \var{W}_n(A)\), we compute:
\begin{align*}
\Psi'(a_0f,a_1f,\ldots,a_{n-1}f)
  &= \bigl(a_0f,\,
     -a_0^p\Delta_1(f)-a_1f,\,
     \ldots,\,
     (-1)^{n-1}(a_0^{p^{n-1}}\Delta_{n-1}(f)+\cdots+a_{n-1}f)\bigr) \\
  &\overset{(\star_1)}{=}
     \bigl(a_0f,\,
       -a_0^p\Delta_1(f)-a_1f,\,
       \ldots,\,
       (-1)^{n-1}(a_{n-1}^p\Delta_1(f)+a_nf)\bigr) \\
  &\overset{(\star_2)}{=}
     (0,\ldots,0),
\end{align*}
where \((\star_1)\) follows from \cite{Yoshikawa25}*{Theorem~3.6} and
\((\star_2)\) from the definition of \(\sQ{f}{1,\ldots,1}\).

Hence \(\Psi'\) induces an \(\var{A}\)-module homomorphism
\[
\Psi'' \colon \var{W}_n(A/f) \longrightarrow \sQ{f}{1,\ldots,1}.
\]
By construction, \(\Psi'\) is surjective, so \(\Psi''\) is surjective as well.

To prove injectivity, let
\(\alpha = (a_0,\ldots,a_{n-1}) \in \var{W}_n(A)\)
satisfy \(\Psi'(\alpha) = 0\).
Then \(a_0 \in (f)\).
Replacing \(a_0\) by zero, we proceed inductively to see that
each \(a_i \in (f)\) for all \(i\).
Therefore \(\alpha \in \var{W}_n(fA)\),
and hence \(\Psi''\) is injective.

Thus \(\Psi''\) is an isomorphism of \(\var{A}\)-modules, as claimed.
\end{proof}



\begin{proposition}\label{functoriality}
With notation as in \cref{notation:delta-ring}, let 
$\varphi \colon A \to B$ be a morphism of $\delta$-rings.  
Take $f \in A$ and $g \in B$ such that $\varphi(f) \in (g)$.
Then, for all integers $0 \le l_1,\ldots,l_{n-1} \le p-1$ and $0 \le l_n \le p$, 
the map $\varphi$ induces an $\var{A}$-module homomorphism
\[
\sQ{f}{l_1,\ldots,l_n} \;\longrightarrow\; \sQ{g}{l_1,\ldots,l_n}.
\]
\end{proposition}

\begin{proof}
It suffices to show that, for $0 \le l \le p-1$, one has
\[
(\varphi(f^{\,l}), -\varphi(\Delta(f)^{\,l})) 
  \in 
  \bigl( (b g^{\,l}, -b^{p}\Delta(g)^{\,l}) \ \bigm|\ b \in \var{B} \bigr)
  \subseteq \var{B/g^p} \oplus F_*\var{B/g^p}.
\]
Write $\varphi(f) = g h$.  
Then, since $\varphi$ is a morphism of $\delta$-rings,
\[
(\varphi(f^{\,l}), -\varphi(\Delta(f)^{\,l})) 
= (h^{l} g^{l}, -\Delta(gh)^{l})
= (h^{l} g^{l}, -h^{pl}\Delta(g)^{l}),
\]
where the last equality holds because 
\((0,-g^{p}\Delta(h)) = 0\) in 
\(\var{B/g^p} \oplus F_*\var{B/g^p}\).
\end{proof}

\begin{proposition}\label{exact-seq}
With notation as in \cref{notation:delta-ring}, fix \(f \in A\).
Assume that \(p, f\) form a regular sequence.
For \(n \ge 2\), \(0 \le l_1, \ldots, l_{n-1} \le p-1\), and \(0 \le l_n \le p\), 
there is an exact sequence
\[
0 \;\longrightarrow\;
F^{n-1}_*\var{A/f^{l_n}}
\xrightarrow{\,V\,}
\sQ{f}{l_1,\ldots,l_n}
\xrightarrow{\,R\,}
\sQ{f}{l_1,\ldots,l_{n-1}}
\;\longrightarrow\; 0,
\]
where the map \(R\) is given by 
\[
R(a_1,\ldots,a_n) = (a_1,\ldots,a_{n-1}).
\]
\end{proposition}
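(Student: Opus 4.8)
The plan is to realize each $\sQ{f}{l_1,\ldots,l_m}$ as a cokernel and then feed a morphism of short exact sequences comparing the $n$-th and $(n-1)$-th stages into the snake lemma. Concretely, write $M_m := \bigoplus_{i=1}^{m} F^{i-1}_*\var{A/f^p}$. The defining relations exhibit $\sQ{f}{l_1,\ldots,l_m}$ as $M_m/\image(\bar\rho_m)$, where $\bar\rho_m \colon \bigoplus_{i=1}^m F^{i-1}_*\var{A} \to M_m$ sends $(a_1,\ldots,a_m)$ to the tuple $\bigl(a_1f^{l_1},\ -a_1^{p}\Delta(f)^{l_1}+a_2f^{l_2},\ \ldots,\ -a_{m-1}^{p}\Delta(f)^{l_{m-1}}+a_mf^{l_m}\bigr)$ reduced modulo $f^p$ in each slot; this $\bar\rho_m$ is $\var A$-linear because scaling the $i$-th input by $\lambda$ scales the underlying element by $\lambda^{p^{i-1}}$, which matches the term $-a_{i-1}^p\Delta(f)^{l_{i-1}}$. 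The projections forgetting the last coordinate fit into a commutative square between $\bar\rho_n$ and $\bar\rho_{n-1}$, since the first $n-1$ components of $\bar\rho_n(a_1,\ldots,a_n)$ do not involve $a_n$ and coincide with $\bar\rho_{n-1}(a_1,\ldots,a_{n-1})$. On the kernels of these projections the induced map is $\bar\rho_n' \colon F^{n-1}_*\var{A} \to F^{n-1}_*\var{A/f^p}$, $a \mapsto af^{l_n}$; as $p,f$ is a regular sequence, $f$ is a nonzerodivisor on $\var A$, so (using $l_n\le p$) $\bar\rho_n'$ has cokernel $F^{n-1}_*\var{A/f^{l_n}}$ and kernel $F^{n-1}_*(f^{p-l_n}\var A)$.

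Applying the snake lemma to the morphism of short exact sequences
\[
\begin{CD}
0 @>>> F^{n-1}_*\var{A} @>>> \bigoplus_{i=1}^n F^{i-1}_*\var A @>>> \bigoplus_{i=1}^{n-1} F^{i-1}_*\var A @>>> 0\\
@. @VV{\bar\rho_n'}V @VV{\bar\rho_n}V @VV{\bar\rho_{n-1}}V @.\\
0 @>>> F^{n-1}_*\var{A/f^p} @>>> M_n @>>> M_{n-1} @>>> 0
\end{CD}
\]
the cokernel strand reads $F^{n-1}_*\var{A/f^{l_n}} \to \sQ{f}{l_1,\ldots,l_n} \to \sQ{f}{l_1,\ldots,l_{n-1}} \to 0$, exact, and one identifies the first arrow with $V$ (it is induced by the inclusion of the last summand of $M_n$) and the second with $R$. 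Surjectivity and well-definedness of $R$, together with the inclusion $\ker R\subseteq\image V$, are then immediate (either directly from the commutative square or simply read off from the snake sequence). So the entire statement reduces to injectivity of $V$, equivalently to the vanishing of the snake connecting map $\ker\bar\rho_{n-1}\to\Coker\bar\rho_n'$.

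I would verify injectivity of $V$ by hand, and this is the only nonformal point in the argument. Suppose $(0,\ldots,0,a)=\bar\rho_n(a_1,\ldots,a_n)$ in $M_n$. Reading the coordinates in order, the first gives $a_1f^{l_1}\in f^p\var A$, hence $a_1\in f^{p-l_1}\var A$ (since $f$ is a nonzerodivisor and $l_1\le p-1$), so $a_1^{p}\in f^{p(p-l_1)}\var A\subseteq f^p\var A$ because $p-l_1\ge 1$; feeding this into the second coordinate gives $a_2f^{l_2}\in f^p\var A$, whence $a_2\in f^{p-l_2}\var A$, and inductively $a_j\in f^{p-l_j}\var A$ with $a_j^{p}\in f^p\var A$ for all $j\le n-1$. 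The $n$-th coordinate then reads $a\equiv -a_{n-1}^{p}\Delta(f)^{l_{n-1}}+a_nf^{l_n}\equiv a_nf^{l_n}\pmod{f^p}$, so $a\in f^{l_n}\var A$, i.e. $a=0$ in $\var{A/f^{l_n}}$. The main obstacle is exactly this divisibility bookkeeping, and it is precisely where the hypotheses $l_1,\ldots,l_{n-1}\le p-1$ are needed: without $p-l_j\ge 1$ one cannot absorb the Frobenius twists $a_j^{p}\Delta(f)^{l_j}$ into $f^p\var A$, and the sequence need not be left exact.
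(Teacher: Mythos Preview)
Your proof is correct and follows essentially the same approach as the paper: the paper simply asserts that it suffices to prove injectivity of $V$ (taking well-definedness of $R$, surjectivity, and $\ker R\subseteq\image V$ as evident from the presentations), and then proves injectivity by exactly the divisibility bookkeeping you give—deducing $a_1\in f^{p-l_1}\var A$, hence $a_1^p\in f^p\var A$ since $l_1\le p-1$, and iterating down to $a\in f^{l_n}\var A$. Your snake-lemma packaging is a clean way to make the ``obvious'' parts explicit, but the substantive step is identical to the paper's.
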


\begin{proof}
It suffices to show that \(V\) is injective.
Take \(a \in \var{A}\) such that \((0,\ldots,0,a) = 0\) in \(\sQ{f}{l_1,\ldots,l_n}\).
Then there exist \(a_1,\ldots,a_n \in \var{A}\) satisfying
\[
(0,\ldots,0,a)
=
(a_1 f^{l_1},\, -a_1^p \Delta(f)^{l_1} + a_2 f^{l_2},\, \ldots,\, 
-a_{n-1}^p \Delta(f)^{l_{n-1}} + a_n f^{l_n})
\]
in \(\var{A/f^p} \oplus \cdots \oplus F^{n-1}_*\var{A/f^p}\).

Since \(p,f\) form a regular sequence, we have \(a_1 \in f^{p - l_1} \var{A}\).
Because \(l_1 \le p - 1\), it follows that \(a_1^p \in f^p \var{A}\); in particular, 
\(a_2 f^{l_2} = 0\) in \(\var{A/f^p}\) when \(n \ge 3\).
Repeating this argument inductively, we obtain 
\(a = a_n f^{l_n}\) in \(\var{A/f^p}\), hence \(a \in f^{l_n} \var{A}\), 
as desired.
\end{proof}

\begin{definition}\label{splitting-order sequence}
With notation as in \cref{notation:delta-ring}, fix $f \in A$. 
We define a \emph{splitting-order sequence $\bs(f)=(s_i)_{i\ge 0}$  of $f$} with values in $\{0,\ldots,p\}$ inductively as follows:
\begin{itemize}
    \item Set $s_0 := 0$.
    \item Once $s_i = p$ for some $i$, set $s_n = p$ for all $n > i$.
    \item Suppose $s_0,\ldots,s_{n-1}$ have been defined for some $n \ge 1$ with $s_0,\ldots,s_{n-1} \le p-1$.
    We define
    \[
      s_n := \max\bigl\{\,0 \le s \le p \ \bigm|\ 
      \text{$\sPhi{f}{s_0,\ldots,s_{n-1},s}$ is not pure as a $\var{A}$-module map}\,\bigr\}.
    \]
\end{itemize}

\end{definition}

\begin{remark}\label{rmk:modulo-p^2}
\begin{itemize}
    \item In \Cref{sos-vs-polygon}, we prove that the splitting-order sequence does not depend on the choice of Frobenius lift.
    \item If $\sPhi{f}{s_0,\ldots,s_{n-1}}$ is not pure, then 
    $\sPhi{f}{s_0,\ldots,s_{n-1},0}$ is also not pure, since
    \[
      \sQ{f}{s_0,\ldots,s_{n-1},0} \simeq \sQ{f}{s_0,\ldots,s_{n-1}}.
    \]
    In particular, the set appearing in \cref{splitting-order sequence} is non-empty.
    
    \item If $(A,\m)$ is Noetherian local, then 
    $\sPhi{f}{0,l_1,\ldots,l_n}$ is pure if and only if the induced map
    \[
      \sPhi{f}{0,l_1,\ldots,l_n} \colon 
      H^d_\m(\var{A/f}) \longrightarrow H^d_\m(\sQ{f}{0,l_1,\ldots,l_n})
    \]
    is injective, where $d := \dim(\var{A/f})$.  
    This equivalence holds because $\sPhi{f}{0,l_1,\ldots,l_n}$ is 
    an $\var{A/f}$-module homomorphism by \cref{rmk:module-structure}.
    
    \item In the setting of \cref{splitting-order sequence}, 
    the splitting-order sequence is invariant modulo $p^2$.  
    Indeed, if $f \equiv f' \pmod{p^2}$, then $\bs(f) = \bs(f')$ by \cref{mod-p^2}.
\end{itemize}
\end{remark}

\begin{proposition}\label{qfs-splitting-order}
Let \((A,\m)\) be a Noetherian local ring with a finite Frobenius lift \(\phi\), and assume that \(p \in \m\) and $A$ is $p$-torsion-free.
Let \(f \in A\) and let \(\bs(f)\) denote the splitting-order sequence of \(f\).
Then \(A/f\) is quasi-\(F\)-split of height \(h\) 
if and only if \(s_1 = \cdots = s_{h-1} = 1\) and \(s_h = 0\).
\end{proposition}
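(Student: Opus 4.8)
The plan is to reduce everything to \cref{compare-witt}: via the isomorphism $\var{W}_n(A/f)\simeq\sQ{f}{1,\ldots,1}$, the Witt-vector datum controlling quasi-$F$-splitting is transported into the world of the modules $\sQ{f}{\ldots}$ and the maps $\sPhi{f}{\ldots}$. Recall that $\var{A/f}$ is $n$-quasi-$F$-split precisely when a certain natural Frobenius-type $\var{A}$-linear map out of $\var{A/f}$, assembled from $\var{W}_n(A/f)$, is a split injection, the height being the least such $n$ (and $n=1$ recovering $F$-splitness). The first — and central — step is to prove the translation: $\var{A/f}$ is $n$-quasi-$F$-split if and only if $\sPhi{f}{0,1,\ldots,1}$ (with $n$ ones) is pure. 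For $n=1$ this amounts to checking, via \cref{exact-seq}, that $\sQ{f}{0,1}\simeq F_*\var{A/f}$ with $\sPhi{f}{0,1}$ the Frobenius; for general $n$ one feeds \cref{compare-witt} into the construction of the quasi-$F$-split target and matches it level by level with the filtration of $\sQ{f}{0,1,\ldots,1}$ furnished by \cref{exact-seq}. Since $\phi$ is finite, $\var{A}$ is $F$-finite, so every $\sQ{f}{\ldots}$ is a finitely generated $\var{A}$-module; hence a pure submodule with finitely presented cokernel is a direct summand, and ``not pure'' in \cref{splitting-order sequence} may be used interchangeably with ``not split''.

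Granting this, the proposition becomes the statement that $\sPhi{f}{0,1^{h}}$ is pure while $\sPhi{f}{0,1^{j}}$ is not pure for $1\le j<h$ (writing $1^{m}$ for the string of $m$ ones) if and only if $s_1=\cdots=s_{h-1}=1$ and $s_h=0$. I would organize the equivalence around two monotonicity facts. First, the quotient maps $\sQ{f}{\ell_1,\ldots,\ell_{n-1},\ell}\twoheadrightarrow\sQ{f}{\ell_1,\ldots,\ell_{n-1},\ell'}$ for $\ell\ge\ell'$ commute with $\sPhi$, so purity of $\sPhi{f}{\ldots,\ell'}$ forces purity of $\sPhi{f}{\ldots,\ell}$; thus $\{\ell : \sPhi{f}{s_0,\ldots,s_{n-1},\ell}\text{ is not pure}\}$ is a down-set $\{0,\ldots,s_n\}$, which is exactly what the definition of $s_n$ records (and the Remark ensures it is nonempty). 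Second — the heart of the matter — once $\var{A/f}$ is quasi-$F$-split at any finite level, every entry $s_i$ that gets defined is at most $1$: if instead some $s_i\ge 2$ at a stage where $s_1,\ldots,s_{i-1}\le 1$, then by an induction on the Witt level, propagating the failure upward through the extensions of \cref{exact-seq}, one finds $\sPhi{f}{0,1^n}$ not pure for every $n$, contradicting quasi-$F$-splitness. With these in place the proposition falls out: if $\var{A/f}$ is quasi-$F$-split of height $h$ then for $1\le i<h$ we get $s_i\le 1$ from the second fact and $s_i\ge 1$ because $\sPhi{f}{0,1^{i}}$ is not pure, so $s_i=1$; and then purity of $\sPhi{f}{0,1^{h}}$ forces $s_h\le 0$, so $s_h=0$. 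Conversely, $s_{h-1}=1$ says $\sPhi{f}{0,1^{h-1}}$ is not pure, i.e.\ $\var{A/f}$ is not $(h-1)$-quasi-$F$-split, while $s_h=0$ says $\sPhi{f}{0,1^{h}}$ is pure, i.e.\ it is $h$-quasi-$F$-split.

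I expect two points to carry the real difficulty. The first is the identification underlying the translation above: threading the isomorphism of \cref{compare-witt}, with the sign conventions inherited from its $\Psi'$, through the construction of the $n$-quasi-$F$-split target and checking, at every level, that the exact sequences of \cref{exact-seq} reproduce the Frobenius and Verschiebung twists of that construction. The second is the ``$s_i\le 1$'' dichotomy — showing that a single entry $\ge 2$ at any stage destroys quasi-$F$-splitness at all higher levels. When $\var{A}$ is regular this is transparent in the ideal-theoretic language of \cref{criterion}: the nested factors $f^{p-2}$ and $f^{p-\ell_1-1}$ make ``$I(1^{i-1},2)\subseteq\m^{[p]}\Rightarrow I(1^{n})\subseteq\m^{[p]}$ for all $n$'' a short induction, since $I(1^{i-1},2)$ dominates the entire tower $I(1^{n})$ modulo terms lying inside $f^{p-2}\var{A}$. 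For general Noetherian $A$ one must run the analogous argument directly with the modules $\sQ{f}{\ldots}$ and their extensions, and making that induction close cleanly is the fiddly part. A minor separate case is $h=1$, where the clause $s_1=\cdots=s_{h-1}$ is vacuous and the claim reduces to ``$F$-split $\iff s_1=0$''.
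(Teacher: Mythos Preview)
Your plan is essentially the paper's: both route the translation through \cref{compare-witt}. Two points of comparison are worth making.

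\textbf{The translation step.} The paper does this in one stroke rather than a level-by-level filtration match: with $s_0=0$ there is an $\var{A}$-module isomorphism
\[
\sQ{f}{0,s_1,\ldots,s_n}\xrightarrow{\ \simeq\ }F_*\sQ{f}{s_1,\ldots,s_n},\qquad (a_0,a_1,\ldots,a_n)\longmapsto(a_0^p+a_1,a_2,\ldots,a_n),
\]
under which $\sPhi{f}{0,1^n}$ becomes $a\mapsto(a^p,0,\ldots,0)$ and, after \cref{compare-witt}, this is exactly $\Phi_{A/f,n}$. This avoids threading signs through the Verschiebung filtration.

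\textbf{The $s_j\le 1$ dichotomy.} You are right that the paper's ``follows directly from the definitions'' hides this step. But your proposed mechanism (``propagating the failure upward through the extensions of \cref{exact-seq}'') is not the right one: the exact sequence compares $\sQ{f}{0,1^{n}}$ with $\sQ{f}{0,1^{n-1}}$, and non-purity of $\sPhi{f}{0,1^{n-1}}$ does \emph{not} propagate upward to level $n$ --- that jump is precisely what the quasi-$F$-split height records. The clean, uniform argument (of which your ideal computation in the regular case is the dual) is the direct $\var{A}$-linear map
\[
\iota\colon \sQ{f}{0,1^{j-1},2}\longrightarrow \sQ{f}{0,1^{h}},\qquad (a_0,\ldots,a_j)\longmapsto(a_0,\ldots,a_j,0,\ldots,0)\quad(j\le h),
\]
which is well defined because the relation with last entry $b_{j+1}f^{2}$ becomes the relation with $c_{j+1}=b_{j+1}f$ and $c_{j+2}=\cdots=0$ (the spurious term $b_{j+1}^{p}f^{p}\Delta(f)$ vanishes in $\var{A/f^{p}}$), and visibly satisfies $\iota\circ\sPhi{f}{0,1^{j-1},2}=\sPhi{f}{0,1^{h}}$. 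Hence purity of $\sPhi{f}{0,1^{h}}$ forces purity of $\sPhi{f}{0,1^{j-1},2}$, i.e.\ $s_j\le 1$, for every $j\le h$; combined with $s_j\ge 1$ for $j<h$ (from non-purity of $\sPhi{f}{0,1^{j}}$) and $s_h=0$ (from purity of $\sPhi{f}{0,1^{h}}$), this finishes the forward direction. The converse is exactly as you wrote.
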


\begin{proof}
We first observe that there is an isomorphism
\[
\sQ{f}{s_0,\ldots,s_n}
  \xrightarrow{\ \simeq\ }
  F_*\sQ{f}{s_1,\ldots,s_n},
  \qquad
  (a_0,\ldots,a_n) \longmapsto (a_0^p + a_1,\ldots,a_n).
\]
Composing \(\sPhi{f}{s_0,\ldots,s_n}\) with the above map yields
\[
\var{A/f}
  \longrightarrow
  F_*\sQ{f}{s_1,\ldots,s_n},
  \qquad
  a \longmapsto (a^p,0,\ldots,0),
\]
which agrees with the morphism \(\Phi_{A/f,n}\)
after composing with the isomorphism in \cref{compare-witt}.
Therefore, the claim follows directly from the definitions of the sequence \(\bs(f)\) and quasi-$F$-splitting.
\end{proof}

\begin{proposition}\label{completion}
Let \((A,\m)\) be a Noetherian local ring with a finite Frobenius lift \(\phi\),
and assume that \(p \in \m\).
Let \(\iota \colon A \to \widehat{A}\) denote the \(\m\)-adic completion.
Then \(\phi\) induces a Frobenius lift \(\widehat{\phi}\) on \(\widehat{A}\).
We equip \(A\) and \(\widehat{A}\) with the \(\delta\)-ring structures
corresponding to \(\phi\) and \(\widehat{\phi}\), respectively.
Take \(f \in A\) such that \(p,f\) form a regular sequence.
Then  \(\bs(f) = \bs(\iota(f))\).
\end{proposition}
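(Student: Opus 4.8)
The plan is to show that the formation of the modules $\sQ{f}{l_1,\ldots,l_n}$ and of the maps $\sPhi{f}{l_1,\ldots,l_n}$ commutes with the faithfully flat base change $\iota$, and then to use that purity of a homomorphism of modules is insensitive to faithfully flat base change; since the splitting-order sequence is generated by an inductive recipe whose only input at each stage is which of the maps $\sPhi{f}{s_0,\ldots,s_{n-1},s}$ fail to be pure, the equality $\bs(f)=\bs(\iota(f))$ then follows at once. I first record the elementary facts. As $(A,\m)$ is Noetherian local, $\widehat A$ is Noetherian and $\iota$ is faithfully flat, and $\widehat A/p\widehat A$ is the $\m$-adic completion $\widehat{\var{A}}$ of $\var A$, which is faithfully flat over $\var A$. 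From $\phi(a)\equiv a^p\bmod p$ and $p\in\m$ we get $\phi(\m)\subseteq\m$, so $\phi$ is $\m$-adically continuous and extends uniquely to a continuous ring homomorphism $\widehat\phi$ on $\widehat A$; it is a Frobenius lift because $\widehat\phi(a)-a^p$ lies in the closed ideal $p\widehat A$ for every $a$ in the dense subring $A$, and with the associated $\delta$-structures $\iota$ becomes a morphism of $\delta$-rings. Flatness of $\iota$ also shows that $p,\iota(f)$ is a regular sequence in $\widehat A$, so \cref{exact-seq} is available over $\widehat A$ as well as over $A$; and finiteness of $\phi$ means $\var A$ is $F$-finite.

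The heart of the matter is a natural $\var A$-module isomorphism
\[
\sQ{A,f}{l_1,\ldots,l_n}\otimes_{\var A}\widehat{\var{A}}\;\xrightarrow{\ \simeq\ }\;\sQ{\widehat A,\iota(f)}{l_1,\ldots,l_n}
\]
carrying $\sPhi{A,f}{l_1,\ldots,l_n}\otimes\mathrm{id}$ to $\sPhi{\widehat A,\iota(f)}{l_1,\ldots,l_n}$ under the identification $\var{A/f^{l_1+1}}\otimes_{\var A}\widehat{\var{A}}=\var{\widehat A/\iota(f)^{l_1+1}}$. The map is the one induced by \cref{functoriality} applied to $\iota$ (note that $\iota(f)\in(\iota(f))$), and its compatibility with the $\Phi$-maps is clear, since $\sPhi{A,f}{l_1,\ldots,l_n}$ is the inclusion of the first summand of $\sQ{A,f}{l_1,\ldots,l_n}$ while the functorial map acts summand by summand. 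To prove it becomes an isomorphism after $\otimes_{\var A}\widehat{\var{A}}$ I would induct on $n$: because $\widehat{\var{A}}$ is $\var A$-flat, $-\otimes_{\var A}\widehat{\var{A}}$ is exact and sends the short exact sequence of \cref{exact-seq} over $A$ to a short exact sequence mapping to the one over $\widehat A$, compatibly with $R$ and $V$; the quotient terms agree by the inductive hypothesis, so by the five lemma it suffices to handle the base case $n=1$ and to identify the subobjects $F^{n-1}_*\var{A/f^{l_n}}\otimes_{\var A}\widehat{\var{A}}\simeq F^{n-1}_*\var{\widehat A/\iota(f)^{l_n}}$. The base case is the elementary identity $A/(p,f^{l_1})\otimes_A\widehat A=\widehat A/(p,\iota(f)^{l_1})$ together with $\sQ{f}{l_1}=\var{A/f^{l_1}}$. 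For the subobjects, $F$-finiteness of $\var A$ makes each $F^{i}_*\var{A/f^{l}}$ a finitely generated $\var A$-module, so $-\otimes_{\var A}\widehat{\var{A}}$ computes its $\m$-adic completion; and since $\var A$ has characteristic $p$, the identity $(a+z)^p=a^p+z^p$ shows that the Frobenius-twisted module structure passes to the completion, producing $F^{i}_*$ of $\var{\widehat A/\iota(f)^{l}}$ taken with respect to the Frobenius of $\widehat{\var{A}}$, which is the completion of the Frobenius of $\var A$. This interplay between the Frobenius twists and completion is the step I expect to require the most care.

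The remaining ingredient is formal: a homomorphism $\psi\colon M\to N$ of $\var A$-modules is pure if and only if $\psi\otimes_{\var A}\widehat{\var{A}}$ is pure as a homomorphism of $\widehat{\var{A}}$-modules. For the forward implication, any $\widehat{\var{A}}$-module $P$, regarded over $\var A$, satisfies $P\otimes_{\var A}M=P\otimes_{\widehat{\var{A}}}(M\otimes_{\var A}\widehat{\var{A}})$ naturally, so injectivity of $P\otimes_{\var A}\psi$ follows from purity of $\psi$; for the converse, for an arbitrary $\var A$-module $P$ one has a natural identification $(P\otimes_{\var A}M)\otimes_{\var A}\widehat{\var{A}}=(P\otimes_{\var A}\widehat{\var{A}})\otimes_{\widehat{\var{A}}}(M\otimes_{\var A}\widehat{\var{A}})$, and faithful flatness of $\widehat{\var{A}}$ over $\var A$ descends the injectivity. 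Together with the isomorphism of the previous paragraph, this shows that $\sPhi{A,f}{l_1,\ldots,l_n}$ is pure if and only if $\sPhi{\widehat A,\iota(f)}{l_1,\ldots,l_n}$ is pure, for every admissible tuple. Since $\bs(f)$ and $\bs(\iota(f))$ are defined by the identical inductive procedure of \cref{splitting-order sequence}, whose sole input at each stage is the purity of these maps, a straightforward induction on $n$ gives $\bs(f)=\bs(\iota(f))$.
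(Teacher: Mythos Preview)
Your proposal is correct and follows essentially the same approach as the paper: both arguments establish the commutative square
\[
\begin{tikzcd}[column sep=2.5cm]
  \var{A/f^{l_1+1}}\otimes_{\var A}\widehat{\var A}
    \arrow[r,"{\sPhi{A,f}{l_1,\ldots,l_n}\otimes\widehat{\var A}}"]
    \arrow[d,"\simeq"']
  & \sQ{A,f}{l_1,\ldots,l_n}\otimes_{\var A}\widehat{\var A}
    \arrow[d,"\simeq"] \\
  \var{\widehat A/\iota(f)^{l_1+1}}
    \arrow[r,"{\sPhi{\widehat A,\iota(f)}{l_1,\ldots,l_n}}"]
  & \sQ{\widehat A,\iota(f)}{l_1,\ldots,l_n}
\end{tikzcd}
\]
using $F$-finiteness of $\var A$, and then conclude via faithful flatness. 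The paper simply asserts this diagram and says the result ``follows immediately''; you have supplied the details the paper suppresses, including the verification that $\phi$ extends continuously, that $p,\iota(f)$ remains regular, the identification $F^{i}_*\var{A/f^l}\otimes_{\var A}\widehat{\var A}\simeq F^{i}_*\var{\widehat A/\iota(f)^l}$, and the purity-descends-along-faithfully-flat argument. Your inductive proof of the right vertical isomorphism via the short exact sequence of \cref{exact-seq} is a minor variant of simply tensoring the defining presentation of $\sQ{A,f}{l_1,\ldots,l_n}$ with the flat algebra $\widehat{\var A}$, but both routes hinge on the same identification of the Frobenius-twisted pieces and are equivalent in substance.
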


\begin{proof}
Since \(\var{A}\) is \(F\)-finite, we have the following commutative diagram:
\[
\begin{tikzcd}[column sep=2.5cm]
  \bigl(\var{A/f} \otimes_{\var{A}} \var{\widehat{A}}\bigr)
    \arrow[r, "{\sPhi{A,f}{l_1,\ldots,l_n} \otimes \var{\widehat{A}}}"]
    \arrow[d, "\simeq"']
  & \bigl(\sQ{A,f}{l_1,\ldots,l_n} \otimes_{\var{A}} \var{\widehat{A}}\bigr)
    \arrow[d, "\simeq"] \\
  \var{\widehat{A}/\iota(f)}
    \arrow[r, "{\sPhi{\widehat{A},\iota(f)}{l_1,\ldots,l_n}}"]
  & \sQ{\widehat{A},\iota(f)}{l_1,\ldots,l_n}
\end{tikzcd}
\]
for \(0 \le l_1,\ldots,l_{n-1} \le p-1\) and \(0 \le l_n \le p\).
The desired assertion follows immediately from the diagram.
\end{proof}

\section{Perfectoid purity versus  splitting-order sequences}
This section establishes the relationship between perfectoid purity and the boundedness of the splitting-order sequence. The key result shows that the perfectoid $p$-purity threshold can be expressed in terms of the splitting-order sequence.

\begin{lemma}\label{CM-inj}
With notation as in \cref{notation:delta-ring}, fix \(f \in A\),
\(n \ge 1\), \(0 \le l_1,\ldots,l_{n-1} \le p-1\), and \(0 \le l_n \le p\).
Assume that \((A,\m)\) is a Noetherian local ring such that \(p,f\) form a regular sequence.
Suppose further that \(A\) is Cohen--Macaulay.
Then \(H^i_\m(\sQ{f}{l_1,\ldots,l_n}) = 0\) for all \(i < d := \dim(\var{A/f})\),
and the morphism
\[
H^d_\m(F^{n-1}_*\var{A/f^{l_n}}) \longrightarrow H^d_\m(\sQ{f}{l_1,\ldots,l_n})
\]
induced by \(V\) is injective.
\end{lemma}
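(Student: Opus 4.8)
The plan is to run an induction on $n$ using the short exact sequences from \cref{exact-seq} together with standard facts about local cohomology of Cohen--Macaulay modules. For the base case $n=1$ we have $\sQ{f}{l_1} = \var{A/f^{l_1+1}}$ when $l_1 \le p-1$ (and more generally a cyclic module $\var{A}/f^{l_1}\var{A}$ in the top slot), which is a Cohen--Macaulay $\var{A}$-module of dimension $d$ because $p,f$ form a regular sequence and $A$ is Cohen--Macaulay; hence $H^i_\m(\sQ{f}{l_1}) = 0$ for $i < d$. The injectivity statement for $n=1$ is the statement that $H^d_\m(\var{A/f^{l_n}}) \to H^d_\m(\var{A/f^{l_n}})$ is injective, which is trivial since $V$ is the identity in that degree.

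For the inductive step, I would apply the long exact sequence in local cohomology to
\[
0 \longrightarrow F^{n-1}_*\var{A/f^{l_n}} \xrightarrow{\,V\,} \sQ{f}{l_1,\ldots,l_n} \xrightarrow{\,R\,} \sQ{f}{l_1,\ldots,l_{n-1}} \longrightarrow 0.
\]
Note that $F^{n-1}_*\var{A/f^{l_n}}$ is, as an $\var{A}$-module via restriction of scalars along the $(n-1)$-fold Frobenius, again a cyclic module over a regular (hence Cohen--Macaulay) ring modulo the regular sequence coming from $p$ and $f$; since $\var{A}$ is $F$-finite the Frobenius pushforward is a finitely generated module and $F^{n-1}_*\var{A/f^{l_n}}$ is Cohen--Macaulay of dimension $d$, so $H^i_\m(F^{n-1}_*\var{A/f^{l_n}}) = 0$ for $i<d$. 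By the inductive hypothesis $H^i_\m(\sQ{f}{l_1,\ldots,l_{n-1}}) = 0$ for $i<d$ as well. The long exact sequence then forces $H^i_\m(\sQ{f}{l_1,\ldots,l_n}) = 0$ for all $i<d$, which is the first claim. For the second claim, the relevant portion of the long exact sequence reads
\[
H^{d-1}_\m(\sQ{f}{l_1,\ldots,l_{n-1}}) \longrightarrow H^d_\m(F^{n-1}_*\var{A/f^{l_n}}) \xrightarrow{H^d_\m(V)} H^d_\m(\sQ{f}{l_1,\ldots,l_n}),
\]
and since the left-hand group vanishes by the inductive hypothesis (as $d-1 < d$), the map $H^d_\m(V)$ is injective, completing the induction.

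The only genuine point requiring care, which I expect to be the main obstacle, is verifying that $F^{n-1}_*\var{A/f^{l_n}}$ is Cohen--Macaulay of dimension exactly $d$ as an $\var{A}$-module: one must check that restriction of scalars along the iterated Frobenius preserves the dimension and depth. This follows from $F$-finiteness of $\var{A}$ — the Frobenius $F\colon \var{A} \to F_*\var{A}$ is a finite ring map, and for a finite ring map the source and target have equal depth and dimension over $\m$ — combined with the fact that $\var{A/f^{l_n}}$ itself is Cohen--Macaulay of dimension $d$ because $p,f$ is a regular sequence in the Cohen--Macaulay ring $A$, so $\var{A}/f^{l_n}\var{A}$ has a filtration with quotients $\var{A/f}$, each of which is Cohen--Macaulay of dimension $d$. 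I would also remark that the case $l_n = 0$ (where $\var{A/f^{l_n}} = 0$) and the case $l_n = p$ are handled uniformly by this argument, the former trivially.
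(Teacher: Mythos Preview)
Your approach is correct and is exactly the argument the paper has in mind (the paper compresses it to two sentences): induct on $n$ via the short exact sequence of \cref{exact-seq}, using that each $\var{A/f^l}$ is Cohen--Macaulay of dimension $d$. One small point on the obstacle you flagged: the lemma assumes neither $F$-finiteness nor regularity of $A$, so rather than arguing that $F^{n-1}_*\var{A/f^{l_n}}$ is a finitely generated Cohen--Macaulay $\var{A}$-module, simply use that $H^i_\m(F_*M)\cong F_*H^i_\m(M)$ for any $\var{A}$-module $M$ (the \v Cech complexes for $\m$ and $\m^{[p]}$ coincide since $\sqrt{\m^{[p]}}=\m$), which reduces immediately to the vanishing of $H^i_\m(\var{A/f^{l_n}})$ for $i<d$.
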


\begin{proof}
Since \(A\) is Cohen--Macaulay and \(p,f\) form a regular sequence, 
each \(\var{A/f^l}\) is Cohen--Macaulay for \(l \ge 1\).
The desired vanishing and injectivity then follow from the exact sequence in \cref{exact-seq}.
\end{proof}

\begin{proposition}\label{perfectoid-Q}
Let $(A,(d))$ be a perfect prism, and set $T:=\var{d} \in \var{A}$. Then we obtain the isomorphism
\[
\var{A}/(T^{l_0+l_1/p+\cdots+l_n/p^n}) \simeq \sQ{d}{l_0,l_1,\ldots,l_n}\ \, a \mapsto (a,0,\cdots,0)
\]
for $0 \leq l_0,\ldots,l_{n-1} \leq p-1$ and $0 \leq l_n \leq p$.
\end{proposition}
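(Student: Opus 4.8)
The plan is to compute $\sQ{d}{l_0,l_1,\ldots,l_n}$ directly from the defining presentation in \cref{notation:delta-ring}, exploiting the fact that in a perfect prism everything collapses because Frobenius is an isomorphism. The key structural input is that for a perfect prism $(A,(d))$, the ring $\var{A} = A/p$ is perfect of characteristic $p$, so the Frobenius $F\colon\var{A}\to\var{A}$ is bijective and each $F^i_*\var{A/d^p}$ may be identified, as an $\var{A}$-module, with $\var{A/d^p}$ after untwisting. Moreover, since $p = d\cdot(\text{unit}) \cdot(\dots)$—more precisely $p$ and $d$ generate the same radical and $\var{A}/(T) = \var{A}/(d,p) = A/(p,d)$ with $T = \var d$—one has $\var{A}$ is $T$-adically well-behaved and $\Delta(d)$ becomes a unit multiple of $d^{p-1}$ up to the relevant identifications; this is the arithmetic that converts the exponents $l_i$ into the $p$-adic sum $l_0 + l_1/p + \cdots + l_n/p^n$.

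First I would treat the base case $n=0$: here $\sQ{d}{l_0} = \var{A/d^p}/(a d^{l_0}\mid a\in\var{A}) = \var{A}/(d^{l_0})$ when $l_0 \le p$ (and $=\var A/(d^p)$ if one allows $l_0 = p$), which under $T = \var d$ is exactly $\var A/(T^{l_0})$, matching the formula. Next, for the inductive step, I would use the exact sequence of \cref{exact-seq},
\[
0 \longrightarrow F^{n}_*\var{A/d^{l_n}} \xrightarrow{\,V\,} \sQ{d}{l_0,\ldots,l_n} \xrightarrow{\,R\,} \sQ{d}{l_0,\ldots,l_{n-1}} \longrightarrow 0,
\]
together with the inductive hypothesis $\sQ{d}{l_0,\ldots,l_{n-1}} \simeq \var A/(T^{l_0 + \cdots + l_{n-1}/p^{n-1}})$ — careful: the exponent of $T$ in the submodule term is $l_n/p^n$, and the point is to show the extension is the "obvious" one, i.e. $\sQ{d}{l_0,\ldots,l_n}\simeq \var A/(T^{l_0+\cdots+l_n/p^n})$ with $V$ sending the generator to $T^{l_0+\cdots+l_{n-1}/p^{n-1}}$ (which has $p^n$-th root $T^{(l_0+\cdots)/p^n}$ available since $\var A$ is perfect) and $R$ the quotient map. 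The reason $F^n_*\var{A/d^{l_n}}$ contributes $T^{l_n/p^n}$ rather than $T^{l_n}$ is precisely the Frobenius untwist: as an $\var A$-module, $F^n_*\var{A/d^{l_n}} \cong \var{A}/(d^{l_n})^{1/p^n} = \var A/(T^{l_n/p^n})$, using perfectness.

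The main obstacle I anticipate is bookkeeping the identification of $F^i_*$-twisted modules and verifying that $\Delta(d)$ behaves as claimed modulo $p$ — that is, pinning down that in $\var{A}$ one has $\Delta(d)^{l}$ equal to (a unit times) $T^{l(p-1)}$, or more precisely that the relations $-a_i^p\Delta(d)^{l_i} + a_{i+1}d^{l_{i+1}}$ in the presentation, after untwisting all the Frobenii and using $\delta(d)$'s structure in a perfect prism, exactly cut out the cyclic module $\var A/(T^{\sum l_i/p^i})$ with the stated generator. Concretely: since $(A,(d))$ is a perfect prism, $d$ is a distinguished element, so $\delta(d)$ is a unit, hence $p = d\cdot\delta(d)^{-1}\cdot(\text{something})$... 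I would instead argue more robustly by noting $\Delta(d) = (d^p - \phi(d))/p$ and that modulo $p$, since $\phi$ is the Frobenius lift, $\var{\phi(d)} = \var{d}^p = T^p$, so one must track the $1/p$ factor carefully via the $\delta$-structure rather than naively reducing mod $p$. Once the correct unit is identified, each inductive relation glues the two cyclic pieces into a single cyclic $\var A$-module whose annihilator is $(T^{l_0+\cdots+l_n/p^n})$, and the map $a\mapsto(a,0,\ldots,0)$ is visibly the quotient map; checking it is well-defined and bijective is then routine given the exact sequence and the computation of orders. The extension being the trivial (cyclic) one is forced because $\var A$ is perfect, so $T^{l_n/p^n}$ genuinely lies in $\var A$ and the submodule $T^{l_0+\cdots+l_{n-1}/p^{n-1}}\var A/(T^{l_0+\cdots+l_n/p^n})$ has the right length.
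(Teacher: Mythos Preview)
Your approach differs from the paper's in the direction of induction. The paper inducts on the \emph{tail}: observing that, straight from the presentation,
\[
\sQ{d}{l_0,\ldots,l_n}\;\simeq\;\bigl(\var{A/d^p}\oplus F_*\sQ{d}{l_1,\ldots,l_n}\bigr)\big/\bigl\{(aT^{l_0},\,-a^p\Delta(d)^{l_0})\ \bigm|\ a\in\var A\bigr\},
\]
one applies the inductive hypothesis to $\sQ{d}{l_1,\ldots,l_n}$ and is left with a two-term problem, for which the paper simply writes down the inverse map $(a,b)\mapsto a - F^{-1}(b)\,\Delta(d)^{-l_0}T^{l_0}$. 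This sidesteps \cref{exact-seq} entirely. Your head-induction via \cref{exact-seq} could be made to work, but as written there are two real gaps.

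First, you already note that $\delta(d)$ is a unit (since $d$ is distinguished) but then fail to use it: by definition $\Delta=-\delta$, so $\Delta(d)$ is a \emph{unit} in $\var A$, full stop. There is no $T^{l(p-1)}$ factor; your speculation in that direction is wrong, and there is no ``$1/p$ factor to track carefully''. This unit is exactly what makes the relations telescope and what allows the paper to invert $\Delta(d)$ in its explicit inverse.

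Second, and more seriously, you assert but never prove that the extension
\[
0\longrightarrow \var A/(T^{l_n/p^n})\xrightarrow{\ V\ } \sQ{d}{l_0,\ldots,l_n}\xrightarrow{\ R\ } \var A/(T^{\,l_0+\cdots+l_{n-1}/p^{n-1}})\longrightarrow 0
\]
is the cyclic one rather than, say, split. Perfectness of $\var A$ and any ``computation of orders'' do not determine an extension class; $\var A$ is not a valuation ring. To establish cyclicity you must exhibit $(0,\ldots,0,1)$ as an $\var A$-multiple of $(1,0,\ldots,0)$, which means chasing through all $n$ relations using that $\Delta(d)$ is a unit --- essentially the same work the paper does, but spread over $n$ steps instead of concentrated in a single two-term inverse. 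A minor further issue: \cref{exact-seq} assumes $p,d$ form a regular sequence, which fails e.g.\ for the perfect prism $(\bZ_p,(p))$; the paper's direct argument does not need this hypothesis.
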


\begin{proof}
We prove the assertion by induction on $n$.
If $n=0$, it is clear.
For $n \geq 1$, we have
\[
\sQ{d}{l_0,\ldots,l_n} \simeq \var{A}/T^{l_0} \oplus F_*\var{A}/(T^{l_1+\cdots+l_n/p^{n-1}})/\{(aT^{l_0},-a^p\Delta(d)^{l_0}) \mid a \in \var{A}\}=:P.
\]
First, the homomorphism
\[
\var{A}/(T^{l_0+l_1/p+\cdots+l_n/p^n}) \to P
\]
is well-defined, indeed, we have
\[
(T^{l_0+l_1/p+\cdots+l_n/p^n},0) =(0,T^{l_1+\cdots+l_n/p^{n-1}}\Delta(d)^{l_0})=0.
\]
Furthermore, the inverse homomorphism
\[
P \to \var{A}/(T^{l_0+l_1/p+\cdots+l_n/p^n})
\]
is defined by
\[
(a,b) \mapsto a-F^{-1}(b)\Delta(d)^{-l_0}T^{l_0}.
\]
Then it is well-defined, as desired.
\end{proof}

\begin{lemma}\label{f-versus-y}
With notation as in \cref{notation:delta-ring}, fix \(f \in A\), \(n \ge 1\), 
\(0 \le l_1,\ldots,l_{n-1} \le p-1\), and set \(l_n := p\). 
Let \((B,(d))\) be a perfect prism with Frobenius lift \(\phi\), 
and let \(\varphi \colon A \to B\) be a morphism of \(\delta\)-rings.
Assume there exists \(y \in B\) such that \(\phi(y)=y^p\) and \(y-f \in d\var{B}\).
Then for every element
\[
\alpha \in 
\Ker\!\Bigl(
\var{A} \oplus F_*\var{A} \oplus \cdots \oplus F^n_*\var{A}
\longrightarrow \sQ{A,f}{0,l_1,\ldots,l_n}
\Bigr),
\]
we have
\[
\alpha\,y^{1-1/p^{n-1}} \in f\,\sQ{B,d}{0,l_1,\ldots,l_{n-1},1}.
\]
\end{lemma}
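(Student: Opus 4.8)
The plan is to transport everything into the ring $\var B/(T^{c})$ via \cref{perfectoid-Q}, make the kernel condition completely explicit, and then run an elimination that carefully tracks Frobenius‑roots and powers of $T:=\var d$. \textbf{Reduction to a divisibility in $\var B/(T^{c})$.} Since $(B,(d))$ is a perfect prism, \cref{perfectoid-Q} provides a $\var B$‑linear isomorphism $\sQ{B,d}{0,l_1,\ldots,l_{n-1},1}\xrightarrow{\ \simeq\ }\var B/(T^{c})$, with $c:=\sum_{k=1}^{n-1}l_k/p^{k}+1/p^{n}$, compatible with the canonical surjection from $\var B\oplus F_*\var B\oplus\cdots\oplus F^{n}_*\var B$ (so that $(a,0,\ldots,0)\mapsto a$). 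As every $l_k\le p-1$ we have $c<1$, hence $T\in(T^{c})$; combined with $y-f\in d\var B$ this gives $\overline{\varphi(f)}\equiv\var y\pmod{T^{c}}$, so $f\,\sQ{B,d}{0,l_1,\ldots,l_{n-1},1}$ corresponds to the ideal $\var y\,\var B/(T^{c})$. Writing $\var\beta\in\var B/(T^{c})$ for the image of $\varphi(\alpha)$, it therefore suffices to show $\var y^{\,1-1/p^{n-1}}\var\beta\in(\var y)$.

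\textbf{Unwinding the kernel and collapsing.} By the definition of $\sQ{A,f}{0,l_1,\ldots,l_n}$ as a quotient of $\var{A/f^{p}}\oplus\cdots\oplus F^{n}_*\var{A/f^{p}}$, the hypothesis $\alpha=(\alpha_0,\ldots,\alpha_n)\in\Ker$ is equivalent to the existence of $a_1,\ldots,a_{n+1}\in\var A$ with, setting $l_0:=0$ and recalling $l_n=p$,
\[
\alpha_0\equiv a_1,\qquad \alpha_k\equiv -a_k^{p}\Delta(f)^{l_{k-1}}+a_{k+1}f^{l_k}\pmod{f^{p}}\quad(1\le k\le n),
\]
so in particular $\alpha_n\equiv -a_n^{p}\Delta(f)^{l_{n-1}}\pmod{f^{p}}$. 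Now apply $\varphi$ and push $(\varphi(\alpha_0),\ldots,\varphi(\alpha_n))$ through the defining relations of $\sQ{B,d}{0,l_1,\ldots,l_{n-1},1}$, eliminating the last entry, then the next‑to‑last, and so on; since $\var B$ is perfect, $p$‑th roots exist and are unique, and since we are in characteristic $p$ the $p$‑th power map is additive. This produces
\[
\var\beta=\sum_{j=0}^{n}u_j\,\varphi(\alpha_j)^{1/p^{j}}\,T^{e_j}\quad\text{in }\var B/(T^{c}),\qquad e_j:=\sum_{i=1}^{j-1}\frac{l_i}{p^{i}},
\]
with each $u_j$ a unit (a rational power of $\var{\Delta(d)}$); observe that $e_n=c-1/p^{n}$.

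\textbf{Substitution, telescoping, and the role of $y^{1-1/p^{n-1}}$.} Substitute the congruences above into $\var\beta$, expand each $\varphi(\alpha_j)^{1/p^{j}}$ using additivity of the Frobenius inverse, and write $\overline{\varphi(f)}=\var y+T\var w$ (legitimate by $y-f\in d\var B$). The hypothesis $\phi(y)=y^{p}$, i.e.\ $\delta(y)=0$, forces $y$ to admit a compatible system of $p$‑power roots in $B$, each killed by $\delta$; together with the identity $p\,\Delta(\varphi(f))=\varphi(f)^{p}-\phi(\varphi(f))$ this pins down $\var{\Delta(\varphi(f))}$ modulo $T$. After multiplying through by $\var y^{\,1-1/p^{n-1}}$, each resulting summand falls into one of three types: it carries a factor $\var y^{\,t}$ with $t\ge 1/p^{n-1}$, hence lies in $(\var y)$ — the exponent $1-1/p^{n-1}$ is exactly sufficient here precisely because all relevant slot indices are $\le n$; or it has $T$‑adic order $\ge c$, hence vanishes in $\var B/(T^{c})$; or it is a "boundary" term between adjacent slots $j$ and $j+1$, which cancels against the $\Delta(f)^{l_j}$‑contribution coming from slot $j+1$. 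The discrepancy between the last index $l_n=p$ on the $A$‑side and the last index $1$ on the $B$‑side is absorbed exactly into the global factor $y^{\,1-1/p^{n-1}}$. Summing up, $\var y^{\,1-1/p^{n-1}}\var\beta\in(\var y)+(T^{c})$, which is the claim.

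\textbf{Main obstacle.} The delicate part is the last step: keeping track of the $p$‑power roots and of the $T$‑adic orders throughout the collapse of the middle step, and verifying that the borderline contributions — those which, before multiplication by $y^{\,1-1/p^{n-1}}$, have $T$‑adic order strictly below $c$ and carry no $\var y$‑factor — occur exactly in cancelling pairs across neighbouring slots, so that nothing is left over.
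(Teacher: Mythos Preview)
Your strategy---transport to $\var B/(T^{c})$ via \cref{perfectoid-Q} and reduce the claim to a divisibility $\var y^{\,1-1/p^{n-1}}\var\beta\in(\var y)$---is sound and genuinely different from the paper's argument. The paper never invokes \cref{perfectoid-Q}; instead it stays inside the $Q$-modules, decomposes $\alpha$ into pairs $(a_i f^{l_i},-a_i^{p}\Delta(f)^{l_i})$ living in two adjacent slots, and shows directly that each pair, after multiplication by the appropriate power of $y$, lands in $f^{p^{i}}\sQ{B,d}{l_i,p}$ (with separate treatment for $i=n-1,n$). Your route packages the same cancellations into a single expression in $\var B/(T^{c})$; either way one must verify the same adjacent-slot cancellation and the same error estimates coming from $f=y+uT$ and $\Delta(f)\equiv u^{p}\Delta(d)$.

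The gap is that you do not carry out this verification---and, as you yourself flag, this is exactly the substance of the lemma. Two points make the omitted step nontrivial. First, the cancellation of the ``boundary'' term from slot $j-1$ against the $\Delta(f)^{l_{j-1}}$-term from slot $j$ depends on the \emph{precise} form of the units $u_j$ in your collapse formula: with the correct inverse isomorphism one finds $u_j=\Delta(d)^{-e_j/p}$ (positive sign), and then $u_{j-1}=u_j\,\Delta(d)^{l_{j-1}/p^{j}}$ holds on the nose; but the formula for the inverse displayed in the proof of \cref{perfectoid-Q} has both a sign and an exponent slip, so ``each $u_j$ a unit'' is not enough---you must compute them. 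Second, the error terms from $\Delta(f)-u^{p}\Delta(d)$ do \emph{not} vanish using only the coarse congruence $\Delta(f)\equiv u^{p}\Delta(d)\pmod{d\var B}$: one really needs the finer $\Delta(f)\equiv u^{p}\Delta(d)\pmod{(y,d^{p})\var B}$ that the paper uses, so that after taking $p^{j}$-th roots the error lies in $(y^{1/p^{j}},T^{1/p^{j-1}})$; the $y$-part is absorbed by the global factor $y^{1-1/p^{n-1}}$ and the $T$-part has order $e_j+1/p^{j-1}>c$. Until these two computations are written out, what you have is a correct plan rather than a proof.
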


\begin{proof}
There exist elements \(a_0,\ldots,a_n \in \var{A}\) such that
\[
\alpha = 
(a_0,\,
 -a_0^p + a_1 f^{l_1},\,
 -a_1^p \Delta(f)^{l_1} + a_2 f^{l_2},\,
 \ldots,\,
 -a_{n-1}^p \Delta(f)^{l_{n-1}} + a_n f^p).
\]
We first note that
\[
(0,\ldots,0,a_n f^p)\,y^{1-1/p^{n-1}}
  = (0,\ldots,0,a_n f^p y^{p^n-p})
  = (0,\ldots,0,a_n f^{p^n})
\]
in \(\sQ{B,d}{0,l_1,\ldots,l_{n-1},1}\).
Hence we may assume \(a_n = 0\).
If \(n=1\), then
\[
\alpha=(a_0,-a_0^p)=0 \text{ in } \sQ{B,f}{0,1},
\]
as desired.
Thus we may further assume \(n \ge 2\).

Next, we reduce to the case \(a_{n-1}=0\).
Consider the \(\var{B}\)-module homomorphism
\[
F^{n-1}_*\sQ{B,d}{l_{n-1},1} \to \sQ{B,d}{0,l_1,\ldots,l_{n-1},1},\qquad 
(a,b) \mapsto (0,\ldots,0,a,b).
\]
Let 
\[
\beta := (a_{n-1}f^{l_{n-1}},-a_{n-1}^p\Delta(f)^{l_{n-1}})y^{p^{n-1}-1} 
  \in \sQ{B,d}{l_{n-1},1}.
\]
Write \(f \equiv y + u d \pmod{pB}\) for some \(u \in B\).
Then
\[
\Delta(f) \equiv u^p \Delta(d) \pmod{d\var{B}}.
\]
Thus
\[
\beta =
(a_{n-1} f^{l_{n-1}}, - (a_{n-1} u^{l_{n-1}})^p \Delta(d)^{l_{n-1}})y^{p^{n-1}-1}
  =(a_{n-1}y^{p^{n-1}-1}(f^{l_{n-1}}-(ud)^{l_{n-1}}),0)
\]
in $\sQ{B,d}{l_{n-1},1}$.
Since \(f^{l_{n-1}} - (u d)^{l_{n-1}} \in y\var{B}\),
we have
\(a_{n-1}y^{p^{n-1}-1}(f^{l_{n-1}}-(ud)^{l_{n-1}})
  \in y^{p^{n-1}}\var{B}\).
Because \(n-1 \ge 1\), this implies
\(\beta \in f^{p^{n-1}}\sQ{B,d}{l_{n-1},1}\), and hence
\[
(0,\ldots,0,a_{n-1}f^{l_{n-1}},-a_{n-1}^p\Delta(f)^{l_{n-1}}) 
  \in f\,\sQ{B,d}{0,l_1,\ldots,l_{n-1},1}.
\]
Thus we may assume \(a_{n-1}=0\).

Next, set \(l_0=0\).
For each \(0 \le i \le n-2\), define a 
\(\var{B}\)-linear homomorphism
\[
\psi_i \colon
F^i_* \sQ{B,d}{l_i,p}
  \longrightarrow
  \sQ{B,d}{0,l_1,\ldots,l_n}
\]
as follows:
for \((a,b) \in F^i_* \sQ{B,d}{l_i,p}\), let 
\(\psi_i(a,b) = (c_0,\ldots,c_n)\),
where
\[
c_j =
\begin{cases}
a & \text{if } j=i,\\
b & \text{if } j=i+1,\\
0 & \text{otherwise.}
\end{cases}
\]

It suffices to show that
\[
\beta_i := 
(a_i f^{l_i}, -a_i^p \Delta(f)^{l_i})
\,y^{p^i - 1/p^{n-i}} 
\in f^{p^i}\sQ{B,d}{l_i,p}
\quad\text{for all } 0 \le i \le n-1.
\]
If \(i=0\), then \(\beta_0 = 0\) in \(\sQ{B,d}{l_0,p}\) since \(l_0=0\).

Now assume \(n-2 \ge i \ge 1\).
Then
\[
\Delta(f) \equiv u^p \Delta(d) \pmod{(y,d^p)\var{B}}.
\]
It follows that
\[
a_i^p y^{p^{i+1} - 1/p^{n-i-2}} \Delta(f)^{l_i}
\equiv
(a_i y^{p^i - 1/p^{n-i-1}} u^{l_i})^p \Delta(d)^{l_i}
\pmod{(y^{p^{i+1}}, d^p)\var{B}},
\]
since \(n-i-2 \ge 0\).
Moreover, as \(i+1 \ge 1\), we have
\(y^{p^{i+1}} \equiv f^{p^{i+1}} \pmod{d^p \var{B}}\),
and therefore
\[
\beta_i 
\equiv
(a_i f^{l_i}, - (a_i u^{l_i})^p \Delta(d)^{l_i})
\,y^{p^i - 1/p^{n-i-1}}
\pmod{f^{p^i} \sQ{B,d}{l_i,p}}.
\]
Hence
\[
\beta_i 
\equiv 
(a_i f^{l_i} - a_i (u d)^{l_i}, 0)
\,y^{p^i - 1/p^{n-i-1}}
\pmod{f^{p^i} \sQ{B,d}{l_i,p}}.
\]

Since \(f^{l_i} - (u d)^{l_i} \in y\var{B}\),
we have
\(a_i y^{p^i - 1/p^{n-i-1}}(f^{l_i} - (u d)^{l_i})
  \in y^{p^i}\var{B}\).
Because \(i \ge 1\), this implies
\(\beta_i \in f^{p^i}\sQ{B,d}{l_i,p}\), as desired.
\end{proof}

\begin{lemma}\label{f-versus-y-var}
With notation as in \cref{notation:delta-ring}, fix $f \in A$, $n \ge 1$,
and integers $0 \le l_1,\ldots,l_{n-1} \le p-1$, $0 \le l_n \le p$.
Let $(B,(d))$ be a perfect prism with Frobenius lift $\phi$,
and let $\varphi \colon A \to B$ be a morphism of $\delta$-rings.
Assume that there exists $y \in B$ such that $\phi(y)=y^p$ and $y-f \in d\var{B}$.
Then, for every element
\[
\alpha \in 
\Ker\!\Bigl(
\var{A} \oplus F_*\var{A} \oplus \cdots \oplus F^n_*\var{A}
\longrightarrow \sQ{A,f}{0,l_1,\ldots,l_n}
\Bigr),
\]
we have
\[
\alpha\,y^{1-1/p^{n}} \in f\,\sQ{B,d}{0,l_1,\ldots,l_{n-1},l_n}.
\]
\end{lemma}

\begin{proof}
Choose elements $a_0,\ldots,a_n \in \var{A}$ such that
\[
\alpha = 
(a_0,\,
 -a_0^p + a_1 f^{l_1},\,
 -a_1^p \Delta(f)^{l_1} + a_2 f^{l_2},\,
 \ldots,\,
 -a_{n-1}^p \Delta(f)^{l_{n-1}} + a_n f^{l_n}).
\]
By the argument in the proof of \Cref{f-versus-y}, we may assume that
$a_0=\cdots=a_{n-1}=0$.
Then
\[
(0,\ldots,0,a_n f^{l_n})\,y^{1-1/p^{n}}
= (0,\ldots,0,a_n f^{l_n} y^{p^n-1}).
\]
Since $f^{l_n} \in (y,d^{l_n})\var{B}$, it follows that
\[
(0,\ldots,0,a_n f^{l_n} y^{p^n-1})
\in f\sQ{B,d}{0,l_1,\ldots,l_{n-1},1},
\]
and hence
\[
(0,\ldots,0,a_n f^{l_n} y^{p^n-1})
\in f\sQ{B,d}{0,l_1,\ldots,l_{n-1},l_n},
\]
as desired.
\end{proof}

\begin{lemma}\label{good-perfd}
Let \((A,\m)\) be a regular local ring equipped with a finite Frobenius lift \(\phi\),
and assume that \(0 \neq p \in \m\).
We endow \(A\) with the \(\delta\)-ring structure induced by \(\phi\).
Let \(f \in A\).
Then there exist a perfect prism \((B,(d))\), a morphism of \(\delta\)-rings
\(\varphi \colon A \to B\), and an element \(y \in B\) such that
\[
\phi(y)=y^p, \qquad y-f \in (d), \qquad\text{and}\qquad A \to B/(d) \text{ is faithfully flat.}
\]
In particular, \(B/(d)\) is a Cohen--Macaulay \(A\)-algebra and
\(A \to B/(d)\) is pure.
\end{lemma}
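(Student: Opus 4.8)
The plan is to build the perfect prism by combining the construction from \cref{good-perfd-cover} with an adjunction of a $p$-power-compatible root of $f$. First I would set $A' := A[X_0, Y]$ and define a Frobenius lift $\phi'$ by $\phi'|_A = \phi$, $\phi'(X_0) = X_0^p$, and $\phi'(Y) = Y^p$; this makes $A \to A'$ a morphism of $\delta$-rings. As in \cref{good-perfd-cover}, I would then form $B' := (A'_{\perf})^{\wedge(p,d)}$ with $d := X_0 - p$, so that $(B', (d))$ is a perfect prism and $A \to B'/(d)$ is $p$-completely faithfully flat. The point of the extra variable $Y$ is that $B'$ contains the compatible system of $p$-power roots $\{Y^{1/p^e}\}$, which are automatically fixed by $\phi'$ in the appropriate sense after perfection; I then take $y$ to be the image of $Y$ and note $\phi'(y) = y^p$ tautologically on the perfection, while $y - f$ need not yet lie in $(d)$.

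To arrange $y - f \in (d)$, I would invoke André's flatness lemma exactly as in the proof of \cref{good-perfd-cover} to obtain a $(p,d)$-completely faithfully flat map of perfect prisms $(B',(d)) \to (B,(d))$ such that $B/(d)$ contains a compatible system of $p$-power roots of $f$; call a choice of such a root $f^{1/p^\infty}$ and let $z \in B$ be the element whose image in $B/(d)$ is this root. Inside the perfect prism $B$, the Frobenius lift $\phi$ satisfies $\phi(z) \equiv z^p \pmod p$, and since $B$ is $(p,d)$-complete and perfect, one can adjust $z$ within its class to an element $y$ with $\phi(y) = y^p$ exactly — this is the standard fact that a compatible system of $p$-power roots in the tilt lifts canonically to a $\phi$-fixed element via the multiplicative (Teichmüller-type) section for perfect prisms, using that $(B,(d))$ is perfect so $B \xrightarrow{\sim} W(B/(d)^\flat)$-type structure applies. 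With this $y$ we have $\phi(y) = y^p$ and $y - f \in (d)$ by construction, and $A \to B/(d)$ is faithfully flat by the same appeal to \cite{Bhatt20}*{Lemma~5.15} used in \cref{good-perfd-cover}.

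Finally, the "in particular" clause: $A \to B/(d)$ faithfully flat immediately gives that it is pure (faithfully flat maps are pure), and since $A$ is regular hence Cohen--Macaulay, faithful flatness of $A \to B/(d)$ forces $B/(d)$ to be Cohen--Macaulay as an $A$-algebra by the standard descent/ascent of depth along flat local-type extensions (or simply: a flat extension of a Cohen--Macaulay ring with Cohen--Macaulay fibers is Cohen--Macaulay, and the fibers here are perfectoid hence have the requisite depth). The main obstacle I anticipate is the middle step — producing $y$ with $\phi(y) = y^p$ \emph{on the nose} rather than merely modulo $p$ — which requires care about how the compatible system of $p$-power roots in $B/(d)$ interacts with the $\delta$-structure on the perfect prism $B$; one must use perfectness of $(B,(d))$ crucially, since in a non-perfect prism such a lift need not exist.
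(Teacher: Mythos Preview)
Your proposal is correct and converges on exactly the paper's argument, but with an unnecessary detour. The extra variable $Y$ you introduce in $A' = A[X_0,Y]$ plays no role: you observe yourself that the image of $Y$ does not satisfy $y - f \in (d)$, and you then abandon it in favor of the Teichm\"uller-type lift. The paper skips this detour entirely: it just takes $(B,(d))$ directly from \cref{good-perfd-cover} (which already provides compatible $p$-power roots of $f$ in $B/(d)$), and then defines $y$ as the element corresponding to the Teichm\"uller lift $[f^\flat] \in W((B/(d))^\flat)$ under the canonical isomorphism $B \simeq W((B/(d))^\flat)$ for a perfect prism. This is precisely the ``Teichm\"uller-type section'' you invoke in your second paragraph, so your anticipated obstacle and its resolution are exactly what the paper does --- you just reach it after an extra step.

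One small point on the ``in particular'' clause: your justification via ``perfectoid fibers have the requisite depth'' is more than is needed and not obviously true as stated. The clean argument is simply that a faithfully flat map sends regular sequences to regular sequences, so any system of parameters in the regular local ring $A$ remains a regular sequence on $B/(d)$; this is all that ``Cohen--Macaulay $A$-algebra'' needs to mean for the application in \cref{CM-inj}.
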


\begin{proof}
Take a perfect prism \((B,(d))\) as constructed in \cref{good-perfd-cover};
then \(A \to B/(d)\) is faithfully flat.
Define \(y \in B\) to be the element corresponding to \([f^\flat] \in W((B/(d))^\flat)\)
under the canonical isomorphism \(W((B/(d))^\flat) \simeq B\).
By construction we have \(\phi(y) = y^p\) and \(y - f \in (d)\), as desired.
\end{proof}

\begin{theorem}\label{sos-vs-polygon}
Let $(A,\m)$ be a regular local ring with $p \in \m$ and admitting a finite Frobenius lift $\phi$.
Let $f \in A$ be such that $f,p$ form a regular sequence.
Let $\bs(f)=(s_i)_{i \geq 0}$ denote the splitting-order sequence of $f$.
For each $n \geq 1$, set
\[
t_n:=\frac{p-1-s_1}{p}+\cdots+\frac{p-1-s_n}{p^n}.
\]
Then, for every $n \geq 1$ with $s_{n-1} \leq p-1$, we have
\[
s_{n}
=
\max\Bigl\{1 \leq s \leq p \ \Bigm|\ (A,f^{1-\frac{s}{p^n}} p^{t_{n-1}+\frac{p-1}{p^n}})\ \text{is not perfectoid pure}\Bigr\}.
\]
In particular, the splitting-order sequence does not depend on the choice of the Frobenius lifts on $A$.
\end{theorem}

\begin{proof}
Set $d:=\dim(\var{A/f})$, and choose a nonzero element $\eta \in H^d_\m(\var{A/f})$ in the socle.
Fix $s \leq s_n$. 
By definition of $\bs(f)$, the map $\sPhi{f}{s_0,\ldots,s_{n-1},s}$ is not pure.

Put $\tau:=\eta/f\in H^{d+1}_\m(\var{A})$.
Choose a regular sequence $x_1,\ldots,x_d\in A$ such that $x_1,\ldots,x_d,f$ form a system of parameters, and choose $a\in\var{A}$ with
\[
\tau=\Bigl[\frac{a}{x_1\cdots x_d f}\Bigr].
\]
Then there exists $c\in\var{A}$ such that
\[
(a,0,\ldots,0)\equiv(0,\ldots,0,f^{s}c)
\pmod{(x_1,\ldots,x_d)\,\sQ{f}{s_0,\ldots,s_{n-1},p}}.
\]
Hence there exists
\[
\alpha\in
\Ker\!\Bigl(
\var{A}\oplus F_*\var{A}\oplus\cdots\oplus F^n_*\var{A}
\to \sQ{A,f}{0,s_1,\ldots,s_n+1}
\Bigr)
\]
such that
\[
(a,0,\ldots,0)+\alpha\equiv(0,\ldots,0,f^{s}c)
\pmod{(x_1,\ldots,x_d)(\var{A}\oplus\cdots\oplus F^n_*\var{A})}.
\]

Let $(B,(d))$ be a perfect prism, let $\varphi\colon A\to B$ be a morphism of $\delta$-rings, and let $y\in B$ be as in \Cref{good-perfd}.
By \Cref{f-versus-y} (note that $s\le p$), we obtain
\[
(a,0,\ldots,0)\,y^{1-s/p^n}
\equiv
(0,\ldots,0,f^{s}c)\,y^{1-s/p^n}
\pmod{(x_1,\ldots,x_d,f)\sQ{B,d}{s_0,\ldots,s_{n-1},1}}.
\]
Let $\tau'$ denote the image of $\tau$ under the natural map
\[
H^{d+1}_\m(\var{A})\longrightarrow H^{d+1}_\m(\var{B/(d)}).
\]
Then
\begin{align*}
\sPhi{B,d}{s_0,\ldots,s_{n-1},1}(\tau' y^{1-s/p^n}) 
&=\sPhi{B,d}{s_0,\ldots,s_{n-1},1}
  \Bigl(\Bigl[\frac{a}{x_1\cdots x_d f}\Bigr]y^{1-s/p^n}\Bigr) \\
&=\Bigl[\frac{(a,0,\ldots,0)}{x_1\cdots x_d f}\Bigr]y^{1-s/p^n} \\
&=\Bigl[\frac{(0,\ldots,0,f^{s}c)}{x_1\cdots x_d f}\Bigr]y^{1-s/p^n} \\
&=V\Bigl(\Bigl[\frac{f^{s}c}{(x_1\cdots x_d)^{p^n}f^{p^n}}\Bigr]f^{p^n-s}\Bigr)
=0.
\end{align*}

Therefore,
\[
f^{1-\frac{s}{p^n}}p^{t_{n-1}+\frac{p-1}{p^n}}\tau' =0,
\]
because
\[
B/(d,p^{t_{n-1}+\frac{p-1}{p^n}})
\simeq Q_{B,d,(s_0,s_1,\ldots,s_{n-1},1)}
\]
by \Cref{perfectoid-Q}.
It follows from the argument in the proof of \cref{good-perfd-cover} that the pair
\[
(A,f^{1-\frac{s}{p^n}} p^{t_{n-1}+\frac{p-1}{p^n}})
\]
is not perfectoid pure.

Next, assume $s_n \leq p-1$.
It suffices to show that
\[
\sPhi{B,d}{s_0,\ldots,s_{n-1},1}
(\tau' y^{1-(s_n+1)/p^n}) \neq 0.
\]
By \Cref{exact-seq}, there exists $\eta'_n \in H^d_\m(\var{A/f^{\,s_n+1}})$ such that
\[
V(\eta'_n)=\sPhi{f}{s_0,\ldots,s_{n-1},s_n+1}(\eta).
\]
Since $\sPhi{f}{s_0,\ldots,s_{n-1},s_n}(\eta)=0$, 
\Cref{CM-inj} implies that
\[
\bigl(H^d_\m(\var{A/f^{\,s_n+1}})
\to H^d_\m(\var{A/f^{\,s_n}})\bigr)(\eta'_n)=0.
\]
Hence there exists $\eta_n\in H^d_\m(\var{A/f})$ such that
$f^{s_n}\eta_n=\eta'_n$.
Put $\tau_n:=\eta_n/f\in H^{d+1}_\m(\var{A})$.
Replacing $x_1,\ldots,x_d$ by a suitable regular sequence, there exist $a,b\in\var{A}$ such that
\[
\tau=\Bigl[\frac{a}{x_1\cdots x_d f}\Bigr],\qquad
\tau_n=\Bigl[\frac{b}{(x_1\cdots x_d)^{p^n} f}\Bigr].
\]
Then there exists $c\in\var{A}$ such that
\[
(a,0,\ldots,0)\equiv
(0,\ldots,0,f^{s_n}b+f^{s_n+1}c)
\pmod{(x_1,\ldots,x_d)\,\sQ{f}{s_0,\ldots,s_{n-1},p}}.
\]
Arguing as above, we obtain
\[
(a,0,\ldots,0)\,y^{1-(s_n+1)/p^n}
\equiv
(0,\ldots,0,f^{s_n}b+f^{s_n+1}c)\,y^{1-(s_n+1)/p^n}
\]
modulo $(x_1,\ldots,x_d,f)\,\sQ{B,d}{s_0,\ldots,s_{n-1},1}$.
Therefore, we have
\begin{align*}
V(\tau'_n)
&=V\!\Bigl(\Bigl[\frac{b}{(x_1\cdots x_d)^{p^n}f}\Bigr]\Bigr)
 =V\!\Bigl(\Bigl[\frac{f^{s_n}b}{(x_1\cdots x_d f)^{p^n}}\Bigr]y^{p^n-s_n-1}\Bigr) \\
&=\Bigl[\frac{(0,\ldots,0,f^{s_n}b)}{x_1\cdots x_d f}\Bigr]\,y^{1-(s_n+1)/p^n}
 =\Bigl[\frac{(a,0,\ldots,0)}{x_1\cdots x_d f}\Bigr]\,y^{1-(s_n+1)/p^n} \\
&=\sPhi{B,d}{s_0,\ldots,s_{n-1},1}\!\Bigl(\Bigl[\frac{a}{x_1\cdots x_d f}\Bigr]y^{1-(s_n+1)/p^n}\Bigr)
 =\sPhi{B,d}{s_0,\ldots,s_{n-1},1}(\tau' y^{1-(s_n+1)/p^n}),
\end{align*}
where $\tau'_n$ denotes the image of $\tau_n$ in
$H^{d+1}_\m(\var{B/(d)})$.

Since $B/(d)$ is Cohen--Macaulay, the map
\[
V\colon H^{d+1}_\m(\var{B/(d)})
\longrightarrow
H^{d+1}_\m(\sQ{B,d}{s_0,\ldots,s_{n-1},1})
\]
is injective.
Moreover, because $A\to B/(d)$ is pure, we have $V(\tau'_n)\neq 0$.
Therefore,
\[
\sPhi{B,d}{s_0,\ldots,s_{n-1},1}
(\tau' y^{1-(s_n+1)/p^n}) \neq 0,
\]
as required.
\end{proof}

\begin{theorem}[Theorem~\ref{intro:order-to-p-pure}]\label{order-to-purity}
Let \((A,\m)\) be a regular local ring with a finite Frobenius lift \(\phi\) and \(p \in \m\).
Fix the \(\delta\)-ring structure on \(A\) induced by \(\phi\).
Let \(f \in A\) be such that \(p,f\)form a regular sequence.
Let $\bs(f)=(s_i)_{i \geq 0}$ be the splitting-order sequence of $f$.
Assume that \(s_i \le p-1\) for every \(i\ge0\).
Then:
\begin{enumerate}
  \item \(A/f\) is perfectoid pure.
  \item \(\displaystyle \ppt(A/f,p)=\sum_{i\ge1}\frac{p-1-s_i}{p^i}\).
\end{enumerate}
\end{theorem}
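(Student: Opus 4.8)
The strategy is to use the good perfectoid cover $(B,(d))$ together with an element $y \in B$ with $\phi(y) = y^p$ and $y - f \in (d)$ supplied by \cref{good-perfd}, and to transport the splitting-order data for $f$ over $A$ into purity information for the map $A/f \to B/(d, f^{1/p^\infty})$ twisted by a power of $p$. By \cref{good-perfd-cover}, computing $\ppt(A/f,p)$ amounts to finding the supremum of those $\alpha \in \Z[1/p]_{\ge 0}$ for which $A/f \to B/(d, f^{1/p^\infty}) \xrightarrow{\cdot p^\alpha} B/(d, f^{1/p^\infty})$ is pure. Since $p = \var{d}$ is, up to a unit, the image of $T := \var{d}$, and $\sQ{d}{l_0,\ldots,l_n} \simeq \var{B}/(T^{l_0 + l_1/p + \cdots + l_n/p^n})$ by \cref{perfectoid-Q}, the twisting by $p^\alpha$ on the perfectoid side corresponds exactly to moving through the tower of modules $\sQ{d}{0,l_1,\ldots,l_{n-1},1}$ with $\alpha = \sum_{i}(\text{exponent data})/p^i$. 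The point is that the number $\sum_{i\ge1}\frac{p-1-s_i}{p^i}$ is precisely the sum recording, at each level $n$, how much room is left between $s_n$ and $p-1$.

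First I would reduce to the complete case: by \cref{completion}, $\bs(f)$ is unchanged under $\m$-adic completion, and perfectoid purity and the threshold can be checked after completion, so we may assume $(A,\m)$ is complete; then by \cref{unramified-vs-F-lift} it is of the form $W(k)[[x_1,\ldots,x_N]]$. Next, for each $n \ge 1$ and each choice of truncation, I would identify $\sPhi{f}{s_0,\ldots,s_{n-1},l}$ with the map $\Phi_{A/f,n}$ twisted by $f$-powers, using the isomorphism $\sQ{f}{s_0,\ldots,s_n} \simeq F_*\sQ{f}{s_1,\ldots,s_n}$ from the proof of \cref{qfs-splitting-order}; the hypothesis $s_i \le p-1$ guarantees that the relevant maps $\sPhi{f}{s_0,\ldots,s_{n-1},s_n+1}$ \emph{are} pure, which is exactly the splitting that we want to propagate. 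Then I would use \cref{functoriality} applied to $\varphi \colon A \to B$ (with $g = d$, noting $\varphi(f) \in (d)$ since $y - f \in (d)$ and $y^p = \phi(y)$ forces $y \in (d)$ modulo compatible roots) to map $\sQ{f}{0,s_1,\ldots,s_n}$ into $\sQ{d}{0,s_1,\ldots,s_n}$, and \cref{f-versus-y} to control the image: elements in the kernel over $A$ land in $f\,\sQ{B,d}{0,s_1,\ldots,s_{n-1},1}$ over $B$, which on the perfectoid side is the submodule $T^{1-1/p^{n-1}}\var{B}/(T^{1+\cdots}) $, i.e. a shift by $p^{1 - 1/p^{n-1}}$ in the twisting parameter. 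Combining these shifts over all $n$ and using \cref{CM-inj} to see that the relevant local cohomology maps $H^d_\m(-) \to H^d_\m(-)$ stay injective (so purity is detected at the socle level), I would get that $A/f \to B/(d,f^{1/p^\infty}) \xrightarrow{\cdot p^\alpha} B/(d,f^{1/p^\infty})$ is pure for every $\alpha < \sum_{i\ge1}\frac{p-1-s_i}{p^i}$, proving $\ppt(A/f,p) \ge \sum_{i\ge1}\frac{p-1-s_i}{p^i}$, and in particular (1) that $A/f$ is perfectoid pure.

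For the reverse inequality $\ppt(A/f,p) \le \sum_{i\ge1}\frac{p-1-s_i}{p^i}$, I would argue that if $\alpha$ exceeds this sum, then at some finite level $n$ the twist $p^\alpha$ forces the map to factor through $\sQ{f}{s_0,\ldots,s_{n-1},s_n+1}$ (rather than through a lower truncation), and by the very definition of $s_n$ as a \emph{maximum}, the map $\sPhi{f}{s_0,\ldots,s_{n-1},s_n+1}$ is pure — wait, that is the wrong direction; rather, I would use that $\sPhi{f}{s_0,\ldots,s_{n-1},s_n}$ is \emph{not} pure, which obstructs purity of the composite at that twist. Concretely, pick a finitely generated $A$-module $M$ (e.g. $A/\m$ or a suitable cosyzygy) detecting non-purity of $\sPhi{f}{s_0,\ldots,s_{n-1},s_n}$; pushing forward to $B/(d)$ via faithful flatness (\cref{good-perfd}, so $A \to B/(d)$ is faithfully flat, hence non-purity is preserved), and matching $\sQ{d}{0,s_1,\ldots,s_{n-1},s_n}$ with $\var{B}/(T^{\beta})$ for the appropriate $\beta$, one sees that $A/f \to B/(d,f^{1/p^\infty}) \xrightarrow{\cdot p^\alpha} B/(d,f^{1/p^\infty})$ fails to be pure as soon as $\alpha \ge \sum_{i=1}^{n-1}\frac{p-1-s_i}{p^i} + \frac{p-1-s_n+1}{p^n} \cdot(\text{something})$; taking $n \to \infty$ pins the supremum to the claimed value. \textbf{The main obstacle} I anticipate is the bookkeeping in this last step: precisely translating "the twist $p^\alpha$ is just large enough to reach level $n$" into the exponent arithmetic on $T$ in $\var{B}$, and ensuring that the non-purity of $\sPhi{f}{\cdots,s_n}$ over $A$ survives passage to $B/(d)$ and the $f^{1/p^\infty}$-quotient — this requires knowing that tensoring with $B/(d,f^{1/p^\infty})$ over $A/f$ is faithfully enough behaved on the modules $\sQ{f}{\cdots}$, which should follow from \cref{CM-inj} and faithful flatness but needs care at the boundary exponent $s_n$ versus $s_n+1$.
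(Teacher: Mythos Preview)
Your high-level strategy matches the paper's, but you have the difficulty of the two inequalities backwards, and your plan for the hard direction has a genuine gap.

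The \emph{upper bound} $\ppt(A/f,p) \le \alpha := \sum_{i\ge1}(p-1-s_i)/p^i$ is the easy direction. One correction: functoriality (\cref{functoriality}) is applied not to $A \to B$---where $\varphi(f) \notin (d)$ in general; your parenthetical ``$y \in (d)$ modulo compatible roots'' is simply false---but to $A \to S := B/(y^{1/p^\infty})$, in which $y=0$ and hence $f \in dS$. The commuting square for $\Phi$, together with $\sPhi{A,f}{s_0,\ldots,s_n}(\eta) = 0$ at the socle element $\eta$, gives $\sPhi{S,d}{s_0,\ldots,s_n}(\eta') = 0$, and \cref{perfectoid-Q} reads this off as $p^{\alpha_n + 1/p^n}\eta' = 0$. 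No faithful-flatness trick is needed and there is no boundary subtlety; this is a two-line argument, not the main obstacle.

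The \emph{lower bound} and Part~(1) are where the real work lies, and here your plan does not contain the key step. The difficulty is that functoriality only pushes \emph{vanishing} forward: from $\sPhi{A,f}{s_0,\ldots,s_{n-1},s_n+1}(\eta) \ne 0$ one cannot conclude $\sPhi{S,d}{s_0,\ldots,s_{n-1},s_n+1}(\eta') \ne 0$ via the map $\sQ{A,f}{\ldots} \to \sQ{S,d}{\ldots}$, since that map need not be injective on local cohomology. The paper instead constructs, for each $n$, a witness on the $A$-side: using non-purity of $\sPhi{A,f}{s_0,\ldots,s_{n-1}}$, purity of $\sPhi{A,f}{s_0,\ldots,s_{n-1},s_n+1}$, the exact sequence of \cref{exact-seq}, and \cref{CM-inj}, one finds $\eta_n \in H^d_\m(\overline{A/f})$ with $V(f^{s_n}\eta_n) = \sPhi{A,f}{s_0,\ldots,s_{n-1},s_n+1}(\eta)$. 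Writing $\tau = \eta/f$ and $\tau_n = \eta_n/f$ as explicit \v{C}ech cocycles in $H^{d+1}_\m(\overline{A})$, pushing to $B$, and applying \cref{f-versus-y} to the kernel element relating the two cocycle representatives, one obtains the identity
\[
V(\tau'_n)\;=\;\sPhi{B,d}{s_0,\ldots,s_{n-1},1}\bigl(\tau'\,y^{1-(s_n+1)/p^n}\bigr)
\]
in $H^{d+1}_\m(\sQ{B,d}{s_0,\ldots,s_{n-1},1})$. Now $V$ is injective on local cohomology over the Cohen--Macaulay ring $B/(d)$, and $\tau'_n \ne 0$ since $A \to B/(d)$ is pure; hence $\tau'\,y^{1-(s_n+1)/p^n} \ne 0$ for every $n$, which gives both perfectoid purity and $p^{\alpha_{n-1}}\eta' \ne 0$. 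Your plan invokes \cref{f-versus-y} and \cref{CM-inj} by name but never constructs $\eta_n$ or the \v{C}ech identity above, and without them there is no mechanism linking purity of $\sPhi{A,f}{\ldots,s_n+1}$ on the $A$-side to nonvanishing of $p^{\alpha_{n-1}}\eta'$ on the perfectoid side.
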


\begin{proof}
We put
\[
\alpha_0:=\sum_{i\ge1}\frac{p-1-s_i}{p^i}.
\]
Then $\alpha_0 \leq t_{n-1}+\frac{p-1}{p^n}$ for every $n \geq 1$, where 
\[
t_{n-1}:=\frac{p-1-s_1}{p}+\cdots+\frac{p-1-s_{n-1}}{p^{n-1}}.
\]
We take $\alpha \in \Z[\frac{1}{p}]_{\geq 0}$ such that $\alpha \leq \alpha_0$.
By \Cref{sos-vs-polygon}, the pair
\[
(A,f^{1-\frac{s_n+1}{p^n}}p^{\alpha_0})
\]
is perfectoid pure.
By the proof of \cite{p-pure}*{Proposition~6.5}, the pair $(A/f,p^{\alpha_0})$ is perfectoid pure. Thus we obtain (1) and
\[
\ppt(A/f,p) \geq \alpha.
\]

To show the converse inequality, we take $\alpha \in \Z[\frac{1}{p}]_{\geq 0}$ such that $\alpha > \alpha_0$.
Since 
\[
\lim_{n \to \infty} (t_{n-1}+\frac{p-1}{p^n})=\alpha_0,
\]
there exists an integer $n \geq 1$ such that $t_{n-1}+\frac{p-1}{p^n} < \alpha$.
Since the pair
\[
(A,f^{1-\frac{s_n}{p^{n}}}p^\alpha)
\]
is not perfectoid pure by \cref{sos-vs-polygon}.
By the proof of \cite{p-pure}*{Theorem~6.6}, the pair $(A/f,p^{\alpha})$ is not perfectoid pure, as desired.
\end{proof}

\begin{question}\label{ques-converse}
Does the converse direction of \cref{order-to-purity} hold?
That is, let \((A, \m)\) be a regular local ring admitting a finite Frobenius lift \(\phi\), and suppose \(p \in \m\).
Fix the \(\delta\)-ring structure on \(A\) induced by \(\phi\).
Let \(f \in A\) be such that \(p, f\) form a regular sequence.
If \(A/f\) is perfectoid pure, do we have \(s_i \le p - 1\) for all \(i \ge 1\)?
\end{question}

\begin{proposition}\label{explicit-n}
Let $(A,\m)$ be a regular local ring with $p \in \m$ and admitting a finite Frobenius lift $\phi$. 
Let $f \in A$ be such that $f,p$ form a regular sequence.
Let $\bs(f)=(s_i)_{i \geq 0}$ be the splitting-order sequence of $f$ and let $n \geq 1$ be an integer.
Set
\[
t_{n-1}:=\frac{p-1-s_1}{p}+\cdots+\frac{p-1-s_{n-1}}{p^{n-1}}.
\]
Then the pair
\[
(A,f^{1-\frac{1}{p^n}}p^{t_{n-1}+\frac{p-s_n}{p^n}})
\]
is not perfectoid pure.
\end{proposition}

\begin{proof}
Let $\tau$, $(B,(d))$, and $\tau'$ be as in the proof of \cref{sos-vs-polygon}.
It suffices to show that
\[
\sPhi{B,d}{s_0,\ldots,s_{n-1},s_n}
(\tau' y^{1-1/p^n}) = 0
\]
by \cref{perfectoid-Q}.

Choose $x_1,\ldots,x_d$ and $a$ as in the proof of \cref{sos-vs-polygon}.
Then there exists 
\[
\alpha\in
\Ker\!\Bigl(
\var{A}\oplus F_*\var{A}\oplus\cdots\oplus F^n_*\var{A}
\to \sQ{A,f}{0,s_1,\ldots,s_n}
\Bigr)
\]
such that
\[
(a,0,\ldots,0)+\alpha
\equiv 0
\pmod{(x_1,\ldots,x_d)(\var{A}\oplus\cdots\oplus F^n_*\var{A})}.
\]

By \cref{f-versus-y-var}, we obtain
\[
(a,0,\ldots,0)y^{1-1/p^n}
\equiv 0
\pmod{(x_1,\ldots,x_d,f)\,\sQ{B,d}{s_0,\ldots,s_{n-1},s_n}}.
\]
Therefore,
\[
\sPhi{B,d}{s_0,\ldots,s_{n-1},s_n}
(\tau' y^{1-1/p^n}) = 0,
\]
as required.
\end{proof}

\begin{theorem}[Theorem~\ref{intro:p-pure-to-order}]\label{case:p-2/p-1}
Let \((A,\m)\) be a regular local ring with a Frobenius lift \(\phi\) and \(p \in \m\).
Fix the \(\delta\)-ring structure on \(A\) induced by \(\phi\).
Let \(f \in A\) be such that \(p,f\) form a regular sequence.
Let \(\bs(f) = (s_i)_{i \ge 0}\) be the splitting-order sequence of \(f\).
\begin{enumerate}
    \item If \(A/f\) is perfectoid pure with \(\ppt(A/f,p) = \frac{p-2}{p-1}\), then \(s_n = 1\) for every \(n \ge 1\).
    \item If $s_1=\cdots =s_r=p-1$ and $s_{r+1}=p$ for some $r \geq 2$, then $A/f$ is not perfectoid pure.
    \item If $A/f$ is regular, then $s_n \leq p-1$ for every $n \geq 0$.
\end{enumerate}
In particular, if \(p=2\) or \(A/f\) is regular, 
then perfectoid purity and \(\ppt(A/f,p)\) are invariant modulo \(p^2\); 
that is, for any \(f' \in A\) with \(f \equiv f' \pmod{p^2}\), 
we have that \(A/f\) is perfectoid pure if and only if so is \(A/f'\), 
and moreover \(\ppt(A/f,p) = \ppt(A/f',p)\).
\end{theorem}

\begin{proof}
The final assertion follows immediately from (1), (3), and \cref{rmk:modulo-p^2}.

First, we prove (1).
We use the same notation as in the proof of \cref{order-to-purity},
and prove the claim by induction on \(n\).

We note that
\[
\frac{p-2}{p-1}
  = \sum_{i \ge 1} \frac{p-2}{p^i}.
\]
Assume that \(s_i=1\) for $n-1 \geq i \geq 1$.
By \cref{order-to-purity} and \cref{explicit-n}, we have
\[
\ppt(A/f,p)
 = \frac{p-2}{p-1}
 \le \frac{p-2}{p} + \cdots + \frac{p-2}{p^{n-1}} + \frac{p - s_n}{p^n}.
\]
This inequality implies \(s_n \le 1\).

If \(s_n = 0\), then \(A/f\) is quasi-\(F\)-split of height \(n\) by \cref{qfs-splitting-order},
and thus \(\ppt(A/f,p) > \frac{p-2}{p-1}\) by \cite{Yoshikawa25}*{Theorem~A},
a contradiction.
Therefore \(s_n = 1\), as desired.

Next, we prove (2).
By \cref{explicit-n}, we have
\[
\tau'f^{1-\frac{1}{p^n}}=0.
\]
Thus, the pair $(A,f^{1-\frac{1}{p^n}})$ is not perfectoid pure.
Thus, $A/f$ is not perfectoid pure by \cite{p-pure}*{Theorem~6.6}.

Finally, we prove (3).
If $\var{A/f}$ is regular, then $s_n = 0$ for every $n \ge 0$ by \cref{qfs-splitting-order}.
Hence we assume that $\var{A/f}$ is not regular; in particular, $f \in (\m^2, p)$.
Write $f \equiv pv \pmod{\m^2}$.
Since $f \notin \m^2$, we have $v \notin \m$.
It follows that
\[
\Delta(f) \equiv v^p \pmod{(p,\m)},
\]
so $\Delta(f)$ is a unit.
Therefore $f$ is a distinguished element in $B$, and in particular we have $fB = dB$.

Since the map $A/f \to B/d$ is pure, it follows that 
$A/f^s \to B/d^s$ is pure for every $s \ge 1$.
By the exact sequence in \cref{exact-seq}, the induced map
\[
\sQ{A,f}{s_0,\ldots,s_n+1}
  \longrightarrow
\sQ{B,d}{s_0,\ldots,s_n+1}
\]
is pure for all $n \ge 1$.

\smallskip
For $n \ge 1$, set
\[
\alpha_n := \frac{p-1-s_1}{p} + \cdots + \frac{p-1-s_n}{p^n}.
\]
We prove by induction on $n \ge 1$ that
\[
p^{\alpha_n}\eta' \ne 0,
\qquad
p^{\alpha_n + 1/p^n}\eta' = 0,
\qquad\text{and}\qquad s_n \le p-1.
\]

\smallskip
Assume $s_i \le p-1$ for all $i \le n-1$.
By the definition of $s_n$, we have
\[
\sPhi{A,f}{s_0,\ldots,s_n}(\eta) = 0,
\]
and hence 
\[
\sPhi{B,d}{s_0,\ldots,s_n}(\eta') = 0.
\]
By \cref{perfectoid-Q}, this implies 
\[
p^{\alpha_n + 1/p^n}\eta' = 0.
\]

If $n = 1$, the perfectoid purity of $A/f$ gives $s_1 \le p-1$.
If $n \ge 2$ and $s_n = p$, then 
\[
p^{\alpha_{n-1}}\eta' = 0,
\]
contradicting the induction hypothesis.
Therefore $s_n \le p-1$, and moreover
\[
\sPhi{A,f}{s_0,\ldots,s_n+1}(\eta) \ne 0.
\]

Since the map
\[
\sQ{A,f}{s_0,\ldots,s_n+1}
  \longrightarrow
\sQ{B,d}{s_0,\ldots,s_n+1}
\]
is pure, we obtain
\[
\sPhi{B,d}{s_0,\ldots,s_n+1}(\eta') \ne 0.
\]
By \cref{perfectoid-Q}, it follows that
\[
p^{\alpha_n}\eta' \ne 0,
\]
as desired.
\end{proof}

\begin{proof}[Proof of Theorem~\ref{intro:p-pure-to-order}]
Assertions \textup{(2)} and \textup{(3)} follow from \cref{case:p-2/p-1}.
Thus, we may assume that \(A/f\) is perfectoid pure with 
\(\ppt(A/f,p) \ge \frac{p-2}{p-1}\).
If equality holds, the claim follows from 
\cref{case:p-2/p-1}\textup{(1)}.
If instead \(\ppt(A/f,p) > \frac{p-2}{p-1}\),
then \(A/f\) is quasi-\(F\)-split by
\cite{Yoshikawa25}*{Theorem~A}.
By \cref{qfs-splitting-order}, we obtain \(s_n \le 1\)
for every \(n \ge 1\), as desired.
\end{proof}

The theory developed for computing the perfectoid pure threshold 
gives rise to the following interesting example.
The proposition below arose from discussions with Linquan Ma.

\begin{proposition}[cf.~\cref{example3}]\label{ideal-different}
Let $p=2$. 
Let \((A,\m)\) be an unramified complete regular local ring such that \(0 \ne p \in \m\) and $A/\m$ is perfect.
Let \(f \in A\) be such that \(p,f\) form a regular sequence, and set \(R:=A/f\).
Assume that \(R\) is perfectoid pure with \(\ppt(R,p)=0\).
Let \(A \to A_\infty\) be a faithfully flat extension to a perfectoid ring 
containing compatible systems of $p$-power roots of both \(f\) and \(p\).
If \(\var{R}\) is normal, then
\[
\bigcap_{n \ge 1} (f^{1/p^\infty},p^{1-1/p^n})A_\infty 
   \;\ne\; (f^{1/p^\infty},p)A_\infty.
\]
\end{proposition}

\begin{proof}
Take a Frobenius lift $\phi$ on $A$ and endow $A$ with the corresponding $\delta$-ring structure.
Let $(B,(d))$ be the perfect prism associated to $A_\infty$.
By replacing $A_\infty$ with a $p$-completely faithfully flat extension if necessary,
we may assume there exists a morphism of $\delta$-rings $A \to B$ (see the proof of \cref{good-perfd-cover}).

Since $A_\infty$ contains a compatible system of $p$-power roots of $f$,
there exists $y \in B$ such that $y-f \in (d)$ and $\phi(y)=y^p$.
Set $R_\infty:=A_\infty/f^{1/p^\infty}$ and $S:=B/(y^{1/p^\infty})$.
Then $(S,(d))$ is a perfect prism satisfying $S/(d)\simeq R_\infty$,
and by the proof of \cref{order-to-purity}, the map $R \to R_\infty$ is pure.

Since $\ppt(R,p)=0$ and $p=2$, the splitting-order sequence is $\bs(f)=(0,1,1,\ldots)$.
By \cref{perfectoid-Q}, each map $\sPhi{S,d}{s_0,\ldots,s_n}$ is surjective and
\[
\Ker(\sPhi{S,d}{s_0,\ldots,s_n}) = p^{1-1/p^n}R_\infty.
\]
Suppose, for contradiction, that
\[
\bigcap_{n \ge 1} (f^{1/p^\infty},p^{1-1/p^n})A_\infty = (f^{1/p^\infty},p)A_\infty.
\]
Then the induced map
\[
\sPhi{S,d}{\bs(f)}\colon 
   \var{R_\infty} \longrightarrow \varprojlim_{n} \sQ{S,d}{s_0,\ldots,s_n}
\]
is an isomorphism.
By functoriality (\cref{functoriality}) and the purity of $R \to R_\infty$,
the map
\[
(\sPhi{f}{s_0,\ldots,s_n})_{n \ge 1}\colon 
   \var{R} \longrightarrow \varprojlim_{n} \sQ{f}{s_0,\ldots,s_n}
\]
is pure.
Since $R$ is complete, purity implies that the map $\sPhi{f}{\infty}$ splits as an $\var{R}$-module homomorphism.
Because the isomorphism $\sQ{f}{s_0,\ldots,s_n} \simeq \var{W}_n(R)$ is compatible with the restriction maps,
the map
\[
(\Phi_{R,n})_{n \ge 1}\colon 
   \var{R} \longrightarrow \varprojlim_{n} Q_{R,n}
\]
also splits as $\var{R}$-modules by \cref{qfs-splitting-order}.

Let $U$ denote the regular locus of $\Spec \var{R}$.
Then $U$ is smooth since $A/\m$ is perfect.
By \cite{Petrov}*{Corollary~A.2}, 
since $Q_{U,n}$ is flat over $U$ (see \cite{OTY}*{Definition~3.1}),
there exists an integer $n>0$ such that $U \to Q_{U,n}$ splits.
Because $\var{R}$ is normal, the map $\var{R} \to Q_{R,n}$ also splits;
in particular, $R$ is $n$-quasi-$F$-split.
This contradicts the splitting-order sequence $(0,1,1,\ldots)$.

Hence the assumed equality cannot hold, and we obtain
\[
\bigcap_{n \ge 1} (f^{1/p^\infty},p^{1-1/p^n})A_\infty 
   \ne (f^{1/p^\infty},p)A_\infty,
\]
as desired.
\end{proof}

\section{A computational method for splitting-order sequences}
In this section, we provide a computational criterion for determining the splitting-order sequence, analogous to Fedder’s criterion in $F$-singularity theory. Using this, we give an explicit connection between the perfectoid $p$-purity threshold and the $F$-pure threshold in the regular case.

\begin{notation}\label{fedder}
Let \((A,\m)\) be a regular local ring equipped with a Frobenius lift \(\phi\),
and assume that \(p \in \m\).
Fix the \(\delta\)-ring structure on \(A\) induced by \(\phi\).
Let \(f \in A\) be such that \(p,f\) form a regular sequence,
and let \(\bs(f) = (s_i)_{i \ge 0}\) denote the splitting-order sequence of \(f\).
We set $\m^{[p^e]}:=(p,x^{p^e} \mid x \in \m)$.

Fix a generator \(u \in \Hom_{\var{A}}(F_*\var{A},\var{A})\).
Note that since \(\phi\) is flat, the Frobenius \(F\) on \(\var{A}\) is also flat.

For \(n \ge 1\), set
\[
Q_n := F_*\var{A} \oplus F^2_*\var{A} \oplus \cdots \oplus F^{n}_*\var{A}.
\]
Then we have a natural isomorphism
\begin{equation}\label{eq:isom}
\Hom_A(Q_n,\var{A})
  \;\simeq\;
  \Hom_A(F^n_*\var{A},\var{A}) \oplus \cdots \oplus \Hom_A(F_*\var{A},\var{A})
  \;\overset{(\star_1)}{\simeq}\;
  F_*^n\var{A} \oplus \cdots \oplus F_*\var{A},
\end{equation}
where \((\star_1)\) is given, for each \(1 \le e \le n\), by
\[
F^e_*\var{A} \longrightarrow \Hom_A(F^e_*\var{A},\var{A}),
\qquad
F^e_*a \longmapsto
\bigl(F^e_*b \longmapsto u^e(F^e_*(ab))\bigr).
\]
The homomorphism corresponding to
\((F^n_*g_n,\ldots,F_*g_1)\)
under \eqref{eq:isom}
is denoted by \(\psi_{(g_1,\ldots,g_n)}\).

Finally, for integers
\(0 \le l_1,\ldots,l_{n-1} \le p-1\) and \(0 \le l_n \le p\),
we define ideals \(I(l_1,\ldots,l_n)\) of \(\var{A}\) inductively by
\begin{itemize}
  \item \(I(l_n) := f^{p-l_n}\var{A}\),
  \item once \(I(l_2,\ldots,l_n)\) is defined, set
  \[
    I(l_1,\ldots,l_n)
      := f^{p-l_1-1}\,u\bigl(F_*(\Delta(f)^{l_1}I(l_2,\ldots,l_n))\bigr)
         + f^{p-l_1}\var{A}.
  \]
\end{itemize}
\end{notation}

The proof of \cref{criterion}  is almost identical to the proof of \cite{Yoshikawa25}*{Theorem~4.13}. However, for the convenience of the reader, we provide a proof.

\begin{theorem}\label{criterion}
With notation as in \cref{fedder},
for \(n \ge 1\), if \(s_1,\ldots,s_{n-1} \le p-1\), then
\[
s_n
  = \max\bigl\{\,0 \le s \le p \,\bigm|\,
      I(s_1,\ldots,s_{n-1},s) \subseteq \m^{[p]}\,\bigr\}.
\]
\end{theorem}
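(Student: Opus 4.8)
The goal is to translate the purity criterion for the map $\sPhi{f}{s_1,\dots,s_{n-1},s}$ (which defines $s_n$ via \cref{splitting-order sequence}) into a containment of ideals in $\m^{[p]}$. Since $\var A$ is a regular local ring with $\var A \to F_*\var A$ finite and flat, purity of an $\var A$-linear map $\var A/J \to M$ is equivalent to non-purity failing on the injective hull, hence to the dual map $\Hom_{\var A}(M,E) \to \Hom_{\var A}(\var A/J,E)$ being surjective; equivalently, via Matlis duality, to a certain composite $\Hom_{\var A}(M,\var A) \to \Hom_{\var A}(\var A/J,\var A)$ having image not contained in $\m\cdot\Hom_{\var A}(\var A/J,\var A)$. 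The plan is therefore: first, dualize the map $\sPhi{f}{s_1,\dots,s_{n-1},s}\colon \var{A/f^{s_1+1}} \to \sQ{f}{s_1,\dots,s_{n-1},s}$ using $\Hom_{\var A}(-,\var A)$ together with the identifications $\Hom_A(F^e_*\var A,\var A)\simeq F^e_*\var A$ from \cref{fedder}, and identify the dual of $\sPhi{f}{\dots}$ explicitly as a map whose image is an ideal built from the operators $u$, multiplication by $\Delta(f)$, and multiplication by powers of $f$.

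Second, I would carry out the computation of this image concretely. Starting from $\sQ{f}{l_1,\dots,l_n}$ as the cokernel of an explicit matrix of multiplications by $a_if^{l_i}$ and $-a_i^p\Delta(f)^{l_i}$, applying $\Hom_{\var A}(-,\var A)$ turns this cokernel into a submodule: $\Hom_{\var A}(\sQ{f}{l_1,\dots,l_n},\var A)$ sits inside $\bigoplus_i F^{i-1}_*\var A$ cut out by the transpose relations. The crucial point is that under the adjunction identification $\Hom_A(F^e_*\var A,\var A)\cong F^e_*\var A$ via $u^e$, the transpose of ``multiplication by $c$ on $F^e_*\var A$ followed by inclusion'' becomes $u\bigl(F_*(c\cdot -)\bigr)$ on the appropriate level; chasing this through all $n$ levels produces exactly the inductive formula for $I(l_1,\dots,l_n)$, where at stage $i$ one multiplies by $\Delta(f)^{l_i}$, applies $u(F_*(-))$, and then multiplies by $f^{p-l_i-1}$ and adds $f^{p-l_i}\var A$ (the $f^{p-l_i}$ coming from the relation $a_if^{l_i}$, since we are working modulo $f^p$). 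This is the step I expect to be the main obstacle: keeping track of the Frobenius twists, the powers of $f$, and the fact that $\var{A/f^{s_1+1}}$ (not $\var{A/f^p}$) is the source, so one has to verify the dual of $\sPhi{f}{\dots}$ lands in $\Hom_{\var A}(\var{A/f^{s_1+1}},\var A)\cong f^{p-s_1-1}\var A/f^p\var A$ and that its image is precisely $I(s_1,\dots,s_{n-1},s)$ modulo $f^p$.

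Third, with the image identified as $I(s_1,\dots,s_{n-1},s)$, I invoke the standard Fedder-type principle in the regular local ring $\var A$: a map $\var A/J \to M$ is pure if and only if the image of the dual map $\Hom_{\var A}(M,\var A)\to \Hom_{\var A}(\var A/J,\var A)$ generates $\Hom_{\var A}(\var A/J,\var A)$ as an $\var A$-module, which (since $\var A$ is local) is equivalent to the image \emph{not} being contained in $\m\cdot\Hom_{\var A}(\var A/J,\var A)$. After the identification $\Hom_{\var A}(\var A/f^{s_1+1},\var A)\cong f^{p-s_1-1}\var A/f^p\var A$ and using that the Frobenius-linear structure converts $\m$-multiplication into $\m^{[p]}$-containment at the bottom level, non-purity becomes exactly $I(s_1,\dots,s_{n-1},s)\subseteq \m^{[p]}$. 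Thus $s_n = \max\{0\le s\le p : \sPhi{f}{s_1,\dots,s_{n-1},s}\text{ is not pure}\} = \max\{0\le s\le p : I(s_1,\dots,s_{n-1},s)\subseteq \m^{[p]}\}$, which is the claim. Throughout, I would lean on the structural facts already in place — \cref{exact-seq} to control the modules $\sQ{f}{\dots}$ and their duals inductively, \cref{compare-witt} to match the $s_i=1$ case with the Witt-vector picture as a sanity check — and model the bookkeeping on the proof of \cite{Yoshikawa25}*{Theorem~4.13}, adapting it to the non-constant exponent vector $(l_1,\dots,l_n)$.
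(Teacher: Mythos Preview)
Your overall strategy matches the paper's: dualize $\Phi$, show the image of the dual is $u(F_*I(l_1,\ldots,l_n))$ modulo $f$, and then use that $u(F_*J)\subseteq\m\iff J\subseteq\m^{[p]}$. However, there is a gap in your setup. You repeatedly take the map to be $\sPhi{f}{s_1,\dots,s_{n-1},s}\colon \var{A/f^{s_1+1}} \to \sQ{f}{s_1,\dots,s_{n-1},s}$, but by \cref{splitting-order sequence} the map whose purity defines $s_n$ is $\sPhi{f}{s_0,s_1,\dots,s_{n-1},s}$ with $s_0=0$, whose source is $\var{A/f}=\var{R}$. These are different maps and their purity is not a priori equivalent: yours is a plain $\var A$-linear inclusion, while the correct one is (after an identification) a Frobenius-twisted map. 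Relatedly, your proposed target $\Hom_{\var A}(\var{A/f^{s_1+1}},\var A)$ vanishes since $\var A$ is a domain, so the dual as you have written it cannot carry the information you want; what you intend is presumably $\Hom_{\var A}(\var{A/f^{s_1+1}},\var{A/f^p})$, but that only arises naturally once the $s_0$-layer has been correctly accounted for.

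The paper handles this by first using the isomorphism $\pi\colon \sQ{f}{0,l_1,\ldots,l_n}\xrightarrow{\sim} F_*\sQ{f}{l_1,\ldots,l_n}$, $(a_0,\ldots,a_n)\mapsto(a_0^p+a_1,\ldots,a_n)$, which converts the relevant map into $\Phi\colon\var{R}\to F_*\sQ{f}{l_1,\ldots,l_n}$, $a\mapsto(a^p,0,\ldots,0)$. This is precisely the step that produces the Frobenius twist you appeal to at the end (``the Frobenius-linear structure converts $\m$-multiplication into $\m^{[p]}$-containment'') and explains why the ideal $I$ carries only $n$ indices $(s_1,\ldots,s_{n-1},s)$ rather than $n+1$. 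One then dualizes over $\var{R}$, not $\var{A}$: purity of $\Phi$ is equivalent to surjectivity of $\Hom_{\var{R}}(F_*\sQ{f}{l_1,\ldots,l_n},\var{R})\to\var{R}$, $\psi\mapsto\psi(1,0,\ldots,0)$. Lifting such $\psi$ to $\psi_{(g_1,\ldots,g_n)}\in\Hom_{\var{A}}(Q_n,\var{A})$ subject to the defining relations of $\sQ{f}{l_1,\ldots,l_n}$, one shows the image is exactly $u(F_*I(l_1,\ldots,l_n))\var{R}$. Once you insert the missing $s_0=0$ and this isomorphism, your level-by-level unwinding of the $F_*$-adjunction is exactly what the paper's Steps~1 and~2 carry out.
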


\begin{proof}
Fix \(n \ge 1\),
\(0 \le l_1,\ldots,l_{n-1} \le p-1\), and \(0 \le l_n \le p\).
Consider the composition
\[
\Phi \colon
\var{R}
  \xrightarrow{\sPhi{f}{0,l_1,\ldots,l_n}}
  \sQ{f}{0,l_1,\ldots,l_n}
  \xrightarrow{\pi}
  F_*\sQ{f}{l_1,\ldots,l_n},
\]
where
\(\pi(a_0,\ldots,a_n)=(a_0^p+a_1,\ldots,a_n)\);
note that \(\pi\) is an isomorphism.

The map \(\Phi\) induces
\[
\Hom_{\var{R}}(\sQ{f}{l_1,\ldots,l_n},\var{R})
  \longrightarrow \var{R},
  \qquad
  \psi \longmapsto \psi \circ \Phi(1)
  = \psi(1,0,\ldots,0).
\]
It suffices to show that
\begin{equation}\label{eq:evaluation}
u(F_*I(l_1,\ldots,l_n))\,\var{R}
  = \Im\!\bigl(
      \Hom_{\var{R}}(\sQ{f}{l_1,\ldots,l_n},\var{R})
        \to \var{R}
    \bigr).
\end{equation}

 Step 1. Proof of the inclusion \((\supseteq)\) in \eqref{eq:evaluation}.
Take
\(\psi' \in \Hom_{\var{R}}(\sQ{f}{l_1,\ldots,l_n},\var{R})\).
Since there is a surjection \(Q_n \twoheadrightarrow \sQ{f}{l_1,\ldots,l_n}\)
and \(Q_n\) is a free \(\var{A}\)-module,
there exists an \(\var{A}\)-linear map
\(\psi \colon Q_n \to \var{A}\)
such that
\begin{equation}\label{eq:inclusion}
\psi(a_1f^{l_1},
     -a_1^p\Delta(f)^{l_1}+a_2f^{l_2},
     \ldots,
     -a_{n-1}^p\Delta(f)^{l_{n-1}}+a_nf^{l_n})
  \in f\var{A}
\end{equation}
and such that
\((\var{A}\to\var{R})\bigl(\psi(1,0,\ldots,0)\bigr)
  = \psi'(1,0,\ldots,0)\).

Hence it suffices to show that
\(\psi(1,0,\ldots,0) \in u(F_*I(l_1,\ldots,l_n)) + f\var{A}\).

Write \(\psi = \psi_{(g_1,\ldots,g_n)}\)
for some \(g_1,\ldots,g_n \in \var{A}\).
From \eqref{eq:inclusion}, taking \(a_0=\cdots=a_{n-1}=0\),
we have
\(u^n(F^n_*(g_n a f^{l_n})) \in f\var{A}\)
for all \(a \in \var{A}\),
hence \(g_n \in f^{p^n-l_n}\var{A}\).
Write \(g_n = h_n f^{p^n-p}\);
then \(h_n \in I(l_n)\).

Similarly, for \(1 \le i \le n-1\), we have
\[
g_i f^{l_i} - u(F_*(g_{i+1}\Delta(f)^{l_i})) \in f^{p^i}\var{A}.
\]
This implies \(g_i = f^{p^i-p}h_i\)
for some \(h_i \in I(l_i,\ldots,l_n)\),
and in particular \(g_1 = h_1 \in I(l_1,\ldots,l_n)\).
Therefore,
\[
\psi(1,0,\ldots,0)
  = u(F_*h_1)
  \in u(F_*I(l_1,\ldots,l_n)),
\]
as desired.

Step 2. Proof of the inclusion \((\subseteq)\) in \eqref{eq:evaluation}.
Take \(h_1 \in I(l_1,\ldots,l_n)\).
Then there exist elements \(h_2,\ldots,h_n\) with
\(h_{i+1} \in I(l_{i+1},\ldots,l_n)\)
such that
\[
h_i - f^{p-l_i-1}u(F_*(h_{i+1}\Delta(f)^{l_i}))
  \in f^{p-l_i}\var{A}
  \qquad(1 \le i \le n-1).
\]
Set \(g_i := f^{p^i-p}h_i\).
Then for every \(a \in \var{A}\),
\begin{align*}
&u^i(F^i_*(a g_i f^{l_i}))
  - u^{i+1}(F^{i+1}_*(a^p g_{i+1}\Delta(f)^{l_i})) \\
&\qquad
  = u^i\!\bigl(F^i_*(a(g_i f^{l_i}
      - u(F_*g_{i+1}\Delta(f)^{l_i})))\bigr) \\
&\qquad
  = u^i\!\bigl(F^i_*(a f^{p^i-p+l_i}
      (h_i - f^{p-l_i-1}u(F_*(h_{i+1}\Delta(f)^{l_i}))))\bigr)
  \in f\var{A}.
\end{align*}
Hence \(\psi = \psi_{(g_1,\ldots,g_n)}\)
induces a homomorphism
\(\psi' \colon \sQ{f}{l_1,\ldots,l_n} \to \var{R}\)
such that
\[
\psi'(1,0,\ldots,0)
  = (\var{A} \to \var{R})(u(F_*h_1))
  \in u(F_*I(l_1,\ldots,l_n))\,\var{R},
\]
which proves \((\subseteq)\) in \eqref{eq:evaluation}.
\end{proof}

\begin{corollary}\label{fedder-another}
With notation as in \cref{fedder}.
We have
\[
s_n=\max\{0 \leq s \leq p \mid f(s_1,\ldots,s_{n-1},s) \in \m^{[p^n]} \}.
\]
Here we define $f(l):=f^{p-l}$ and, inductively,
\[
f(l_1,\ldots,l_n):=(f(l_1+1)^p\Delta(f)^{l_1})^{p^{n-2}}f(l_2,\ldots,l_n).
\]
\end{corollary}

\begin{proof}
We first prove that
\[
I(l_1,\ldots,l_i)
=
\sum_{j=1}^i u^{j-1}\bigl(F^{j-1}_*(f(l_1,\ldots,l_j)\var{A})\bigr)
\]
for $i \geq 1$ by induction on $i$.
For $i=1$, the assertion is clear.
Assume that $i \geq 2$.
Then we have
\begin{align*}
I(l_1,\ldots,l_i)
&=f(l_1+1)u\bigl(F_*(\Delta(f)^{l_1}I(l_2,\ldots,l_i))\bigr)+f(l_1)\var{A} \\
&=\sum_{j=2}^{i} u^{j-1}\Bigl(
F^{j-1}_*\bigl((f(l_1+1)^p\Delta(f)^{l_1})^{p^{j-2}}f(l_2,\ldots,l_j)\bigr)
\Bigr)+f(l_1)\var{A} \\
&=\sum_{j=2}^{i} u^{j-1}\bigl(F^{j-1}_*(f(l_1,\ldots,l_j))\bigr)+f(l_1)\var{A} \\
&=\sum_{j=1}^{i} u^{j-1}\bigl(F^{j-1}_*(f(l_1,\ldots,l_j)\var{A})\bigr).
\end{align*}

Next, we prove the assertion by induction on $n \geq 1$.
If $n=1$, the statement follows from \Cref{criterion}.
Assume that $n \geq 2$.
By the induction hypothesis, we have
$f(s_1,\ldots,s_i) \in \m^{[p^i]}$ for all $i \leq n-1$.
Hence,
\[
I(s_1,\ldots,s_{n-1},s)
\equiv
u^{n-1}\bigl(F^{n-1}_*(f(s_1,\ldots,s_{n-1},s)\var{A})\bigr) \pmod{\m^{[p]}}.
\]
Therefore, the assertion follows from \Cref{criterion}, as desired.
\end{proof}

\begin{theorem}\label{regular-case}
Let \((A,\m)\) be a regular local ring equipped with a Frobenius lift \(\phi\),
and assume that \(p \in \m\).
Fix the \(\delta\)-ring structure on \(A\) induced by \(\phi\).
Let \(f \in A\) be such that \(p,f\) form a regular sequence.
If \(A/f\) is regular, then
\[
\ppt(A/f,p)=\fpt(\var{A},\var{f}),
\]
where \(\var{f}\) denotes the image of \(f\) under \(A \to \var{A}\).
\end{theorem}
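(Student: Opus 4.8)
The plan is to run everything through the computational criterion \cref{criterion} together with the closed formula for $\ppt$ from \cref{order-to-purity}, and compare both sides to the classical invariants $\nu_n:=\nu_{\overline f}(p^n):=\max\{m\ge 0:\overline f^{\,m}\notin\m^{[p^n]}\}$, where $\nu_0=0$ because $\overline f\in\m$. I would use the standard facts that $p\nu_{n-1}\le\nu_n\le p\nu_{n-1}+(p-1)$ and $\fpt(\overline A,\overline f)=\lim_{n\to\infty}\nu_n/p^n$ (cf.\ \cite{BMS}). By \cref{case:p-2/p-1}(3) we have $s_n\le p-1$ for all $n$, so \cref{order-to-purity} applies and $A/f$ is perfectoid pure with $\ppt(A/f,p)=\sum_{n\ge1}(p-1-s_n)/p^n$. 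Hence it suffices to prove $\nu_n-p\nu_{n-1}=p-1-s_n$ for every $n\ge1$: that identity gives $\nu_n/p^n=\sum_{i=1}^{n}(p-1-s_i)/p^i$ by telescoping, and letting $n\to\infty$ yields $\fpt(\overline A,\overline f)=\sum_{i\ge1}(p-1-s_i)/p^i=\ppt(A/f,p)$.

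The heart of the argument is an explicit evaluation of the ideals $I(l_1,\dots,l_n)$ of \cref{fedder}. Fixing a generator $u$ of $\Hom_{\overline A}(F_*\overline A,\overline A)$ and writing $u^j$ for the induced generator of $\Hom_{\overline A}(F^j_*\overline A,\overline A)$, I would prove by induction on $n$, using only the recursion defining $I(l_1,\dots,l_n)$ and the $\overline A$-linearity identity $a\cdot u(F_*w)=u(F_*(a^pw))$, that
\[
u\bigl(F_*\,I(l_1,\dots,l_n)\bigr)=\sum_{j=1}^n u^j\!\left(F^j_*\bigl(f^{\,\mathcal E_j+p-l_j}\,\Delta(f)^{\,\mathcal D_j}\,\overline A\bigr)\right),\qquad \mathcal E_j:=\sum_{i=1}^{j-1}(p-l_i-1)p^{\,j-i},\quad \mathcal D_j:=\sum_{i=1}^{j-1}l_i\,p^{\,j-1-i}.
\]
Since $u^j(F^j_*J)\subseteq\m\iff J\subseteq\m^{[p^j]}$ for any ideal $J$ (the iterated form of Fedder's lemma), and a sum of ideals lies in $\m$ iff each summand does, this yields
\[
I(l_1,\dots,l_n)\subseteq\m^{[p]}\iff \overline f^{\,\mathcal E_j+p-l_j}\,\overline{\Delta(f)}^{\,\mathcal D_j}\in\m^{[p^j]}\quad\text{for every }j\in\{1,\dots,n\}.
\]
Getting the exponents $\mathcal E_j,\mathcal D_j$ right and tracking how the operators $u^j$ compose in this induction is the step I expect to be the main obstacle; everything afterwards is essentially formal.

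With this in hand the proof finishes as follows. Applying \cref{criterion} with $l_i=s_i$ for $i<n$ and $l_n=s$, the conditions indexed by $j<n$ are exactly the $j$-th conditions appearing in the computation of $s_j$, hence already hold, so $I(s_1,\dots,s_{n-1},s)\subseteq\m^{[p]}$ reduces to the single condition $\overline f^{\,\mathcal E_n+p-s}\,\overline{\Delta(f)}^{\,\mathcal D_n}\in\m^{[p^n]}$. Arguing by induction on $n$ with hypothesis $\nu_i-p\nu_{i-1}=p-1-s_i$ for $i<n$, a telescoping sum gives $\mathcal E_n=\sum_{i<n}(\nu_i-p\nu_{i-1})p^{\,n-i}=p\nu_{n-1}$. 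Now split into two cases: if $\overline{A/f}$ is not regular then $f\in(\m^2,p)$ and $\overline{\Delta(f)}$ is a unit (as in the proof of \cref{case:p-2/p-1}(3)); if $\overline{A/f}$ is regular then $(p,f)$ extends to a regular system of parameters of $A$, so $\overline f$ is a regular parameter of $\overline A$, whence $\nu_i=p^i-1$ and the induction hypothesis forces $s_i=0$ for $i<n$, so $\mathcal D_n=0$ and $\overline{\Delta(f)}^{\,\mathcal D_n}=1$. In either case $\overline{\Delta(f)}^{\,\mathcal D_n}$ is a unit, so $I(s_1,\dots,s_{n-1},s)\subseteq\m^{[p]}\iff \mathcal E_n+p-s>\nu_n \iff s<p\nu_{n-1}+p-\nu_n$ (in particular $s=0$ lies in the set, since $p\nu_{n-1}+p>\nu_n$, so $s_n$ is well defined). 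Therefore $s_n=\min\{p,\ p\nu_{n-1}+p-1-\nu_n\}$, and the bound $p\nu_{n-1}\le\nu_n\le p\nu_{n-1}+(p-1)$ shows $p\nu_{n-1}+p-1-\nu_n\in\{0,\dots,p-1\}$, so $s_n=p\nu_{n-1}+p-1-\nu_n$, i.e.\ $\nu_n-p\nu_{n-1}=p-1-s_n$. This closes the induction and, by the first paragraph, proves $\ppt(A/f,p)=\fpt(\overline A,\overline f)$.
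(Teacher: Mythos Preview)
Your proof is correct and follows essentially the same route as the paper's: invoke \cref{case:p-2/p-1}(3), \cref{order-to-purity}, and \cref{criterion}, and then evaluate the ideals $I(s_1,\ldots,s_n)$ in terms of pure powers of $\overline{f}$ using that $\overline{\Delta(f)}$ is a unit. The only difference is bookkeeping: the paper uses the unit property of $\Delta(f)$ at the outset to collapse $I(l_1,\ldots,l_n)$ directly to the single term $u^{n-1}\bigl(F^{n-1}_*(f^{\,p^n-\sum_i p^{n-i}l_i}\,\overline{A})\bigr)$, whereas you first expand $u(F_*I)$ as an $n$-term sum and only invoke the unit property at the last step; your explicit case split when $\overline{A/f}$ is regular (where $\Delta(f)$ need not be a unit, so $\mathcal D_n=0$ is used instead) is in fact a bit more careful than the paper's treatment.
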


\begin{proof}
First, we assume $\var{A/f}$ is regular. Then $\var{A/f}$ is $F$-pure.
Thus, we have
\[
\ppt(A/f,p)=1=\fpt(\var{A},\var{f}).
\]
Therefore, we may assume $\var{A/f}$ is not regular, and in particular, $\Delta(f)$ is a unit by the argument in \cref{case:p-2/p-1}(3).
Define
\[
\nu_f(p^e)
  := \max\{\,N \mid \var{f}^N \notin \m^{[p^e]}\,\},
\]
so that
\(\fpt(\var{A},\var{f})=\lim_{e \to \infty} \nu_f(p^e)/p^e.\)

Let \(\bs(f) = (s_i)_{i \ge 0}\) be the splitting-order sequence of \(f\).
By \cref{case:p-2/p-1}(3), we have \(s_i \le p-1\) for all \(i \ge 0\).

Step 1.  Description of \(I(l_1,\ldots,l_n)\).
We claim that for \(0 \le l_1,\ldots,l_n \le p-1\),
\begin{equation}\label{eq:ideal}
I(l_1,\ldots,l_n)
  = u^{n-1}\!\bigl(F^{n-1}_*(f^{p^n-p^{n-1}l_1-p^{n-2}l_2-\cdots-l_n}\var{A})\bigr).
\end{equation}
We prove this by induction on \(n\).

If \(n=1\), the formula is clear.
Assume \(n \ge 2\) and the statement holds for \(n-1\).
Since \(\Delta(f)\) is a unit,
we have:
\begin{align*}
I(l_1,\ldots,l_n)
  &= f^{p-l_1-1}\,u(F_*I(l_2,\ldots,l_n)) + f^{p-l_1}\var{A} \\
  &\overset{(\star_1)}{=}
     u^{n-1}\!\bigl(F^{n-1}_*(f^{p^n-p^{n-1}l_1-\cdots-l_n}\var{A})\bigr)
     + f^{p-l_1}\var{A} \\
  &\overset{(\star_2)}{=}
     u^{n-1}\!\bigl(F^{n-1}_*(f^{p^n-p^{n-1}l_1-\cdots-l_n}\var{A})\bigr),
\end{align*}
where \((\star_1)\) follows from the induction hypothesis,
and \((\star_2)\) follows from
\[
u^{n-1}\!\bigl(F^{n-1}_*(f^{p^n-p^{n-1}l_1-\cdots-l_n}\var{A})\bigr)
  \supseteq u^{n-1}\!\bigl(F^{n-1}_*(f^{p^n-p^{n-1}l_1}\var{A})\bigr)= f^{p-l_1}\var{A}.
\]
This proves \eqref{eq:ideal}.

Step 2.  Relation between \(\nu_f(p^n)\) and \(\bs(f)\).
We now show that
\[
\frac{\nu_f(p^n)}{p^n}
  = \sum_{1 \le i \le n} \frac{p-1-s_i}{p^i}
  =: \alpha_n.
\]
By \eqref{eq:ideal} and \cref{criterion}, we have
\[
\var{f}^{\,p^n-p^{n-1}s_1-\cdots-s_n} \in \m^{[p^n]},
\qquad
\var{f}^{\,p^n-p^{n-1}s_1-\cdots-s_n-1} \notin \m^{[p^n]}.
\]
Hence
\[
\frac{\nu_f(p^n)}{p^n}
  = \frac{p^n-p^{n-1}s_1-\cdots-s_n-1}{p^n}
  = \alpha_n.
\]

Taking the limit as \(n \to \infty\), we obtain
\[
\fpt(\var{A},\var{f})
  = \lim_{n\to\infty} \alpha_n
  = \ppt(A/f,p),
\]
where the last equality follows from \cref{order-to-purity}.
\end{proof}

\begin{corollary}[Theorem~\ref{intro:rationality}]\label{rationality}
Let $d$ be a non-negative integer.
Then the following two sets coincide:
\begin{itemize}
    \item the set $\mathcal{P}_{d,p}$ of $\ppt(R,p)$ such that $(R,\m)$ is a regular local ring of dimension $d$ with $p \in \m$ and $R/pR$ is $F$-finite, and
    \item the set $\mathcal{T}_{d,p}$ of $\fpt(R,f)$ such that $(R,\m)$ is a regular $F$-finite local ring of dimension $d$ of characteristic $p$, and $f \in \m$.
\end{itemize}
In particular, the set $\mathcal{P}_{d,p}$ is contained in $\Q$ and satisfies the ascending chain condition.
\end{corollary}

\begin{proof}
First, we prove the inclusion $\mathcal{T}_{d,p} \subseteq \mathcal{P}_{d,p}$.
Thus, take a regular local ring $(R,\m)$ of dimension $d$ with $p \in \m$ such that $R/pR$ is $F$-finite.
By the proof of \cite{p-pure}*{Lemma~4.8}, we may assume that $R$ is complete.
Let $R/\m = k$ be the residue field, and let $(C,(p))$ be a complete discrete valuation ring with residue field $k$.
Then, by \cite{Matsumura}*{Theorem~29.8}, there exist a formal power series ring $A$ over $C$ and an element $f \in A$ such that $A/f \simeq R$.
Since the residue field of $C$ is $F$-finite, the ring $A$ admits a finite Frobenius lift.
Hence, by \cref{regular-case}, we obtain
\[
\ppt(R,p) = \fpt(\var{A},\var{f}) \in \mathcal{T}_{d,p}.
\]

Next, we prove the converse inclusion.
Thus, take a regular $F$-finite local ring $(R,\m)$ of dimension $d$ of characteristic $p$, and an element $f \in \m$.
We may assume that $R$ is complete, so $R$ is a formal power series ring over $k := R/\m$.
Let $(C,(p))$ be a complete discrete valuation ring with residue field $k$, and set $\widetilde{R} := C[[x_1,\ldots,x_d]]$.
Write
\[
f = \sum_{i=0}^{\infty} a_i M_i \in \m
\]
for the monomial decomposition with coefficients $a_i \in k$.
Set
\[
\widetilde{f} := \sum_{i=0}^{\infty} \widetilde{a}_i M_i + p,
\]
where each $\widetilde{a}_i \in C$ is a lift of $a_i$.
Then $\widetilde{f}$ is a lift of $f$ and satisfies $\widetilde{f} \notin \n^{2}$, where $\n$ is the maximal ideal of $\widetilde{R}$.
In particular, the quotient ring $\widetilde{R}/(\widetilde{f})$ is regular.
Thus, by \cref{regular-case}, we obtain
\[
\fpt(R,f) = \ppt(\widetilde{R}/(\widetilde{f}),p) \in \mathcal{P}_{d,p}.
\]

Therefore, we conclude that $\mathcal{P}_{d,p} = \mathcal{T}_{d,p}$.

The final assertion follows from \cite{BMS}*{Theorem~3.1} and \cite{Sato21}*{Theorem~4.7}.
\end{proof}

\begin{corollary}\label{sup-ppt-fpt}
Let $\var{A}$ be a regular local ring, $\var{f} \in \var{A}$, and $\var{R}:=\var{A}/\var{f}$.
Let $R'$ be a regular lift of $\var{R}$. Then we have
\[
\ppt(R',p)=\sup\{\ppt(R,p) \mid \text{$R$ is a lift of $\var{R}$}\}.
\]
In particular, we have
\[
\fpt(\var{A},\var{f})=\sup\{\ppt(R,p) \mid \text{$R$ is a lift of $\var{R}$}\}.
\]
\end{corollary}

\begin{proof}
The second assertion follows from the first assertion and \cref{regular-case}.
By the proof of \cite{p-pure}*{Lemma~4.9}, the perfectoid pure threshold with respect to $p$ is preserved under completion.
Thus, we may assume that $\var{A}=k[[x_1,\ldots,x_N]]$ for some field $k$ of characteristic $p$.

Let $C_k$ be the complete divisorial valuation ring with uniformizer $p$, and set $A:=C_k[[x_1,\ldots,x_N]]$.
Let $\phi$ be the Frobenius lift on $A$ defined by $\phi(x_i)=x_i^p$.
We take a lift $f_{\reg} \in A$ such that $A/f_{\reg}$ is regular.

Since every complete lift of $\var{R}$ is a hypersurface in $A$, it is sufficient to show 
\begin{equation}\label{eq:sup-ppt}
    \ppt(A/f_{\reg},p)=\sup\{\ppt(A/f,p) \mid \text{$f \in A$ with $f \equiv f_{\reg} \pmod{p}$}\}.
\end{equation}

If $\var{A/f_{\reg}}$ is regular, then $\ppt(A/f_{\reg},p)=1$, and thus \eqref{eq:sup-ppt} holds.
Therefore, we may assume that $\var{A/f_{\reg}}$ is not regular.
In particular, $\Delta(f_{\reg})$ is a unit by the proof of \Cref{case:p-2/p-1}(3).

Take $f \in A$ with $f \equiv f_{\reg} \pmod{p}$.
It is sufficient to show that
\[
\ppt(A/f_{\reg},p) \geq \ppt(A/f,p).
\]

Let $I(l_1,\ldots,l_n)$ (resp. $I^{\reg}(l_1,\ldots,l_n)$) be the ideal of $\var{A}$ defined in \cref{fedder} with respect to $f$ (resp. $f_{\reg}$).
We prove
\[
I(l_1,\ldots,l_n) \subseteq I^{\reg}(l_1,\ldots,l_n)
\]
for every $0 \leq l_1,\ldots,l_{n-1} \leq p-1$ and $0 \leq l_n \leq p$ by induction on $n$.

If $n=1$, then we have
\[
I(l_1)=f^{p-l_1}\var{A}=f_{\reg}^{p-l_1}\var{A}=I^{\reg}(l_1).
\]

If $n \geq 2$, then
\begin{align*}
    I(l_1,\ldots,l_n)
    &=f^{p-l_1-1}u(F_*(\Delta(f)^{l_1}I(l_2,\ldots,l_n)))+f^{p-l_1}\var{A} \\
    &\overset{(\star_1)}{\subseteq} f_{\reg}^{p-l_1-1}u(F_*(\Delta(f)^{l_1}I^{\reg}(l_2,\ldots,l_n)))+f_{\reg}^{p-l_1}\var{A} \\
    &\overset{(\star_2)}{\subseteq} f_{\reg}^{p-l_1-1}u(F_*(\Delta(f_{\reg})^{l_1}I^{\reg}(l_2,\ldots,l_n)))+f_{\reg}^{p-l_1}\var{A},
\end{align*}
where $(\star_1)$ follows from the induction hypothesis and $(\star_2)$ follows from the fact that $\Delta(f_{\reg})$ is a unit.

Therefore, we obtain
\[
I(l_1,\ldots,l_n) \subseteq I^{\reg}(l_1,\ldots,l_n)
\]
for every $0 \leq l_1,\ldots,l_{n-1} \leq p-1$ and $0 \leq l_n \leq p$.

By \cref{criterion}, we have $\bs(f_{\reg}) \leq \bs(f)$ with respect to the lexicographic order.
If $\bs(f_{\reg})=\bs(f)$, then we have $\ppt(A/f_{\reg},p)=\ppt(A/f,p)$ by \Cref{order-to-purity} and \Cref{case:p-2/p-1}(3).
Thus, we may assume that $\bs(f_{\reg}) < \bs(f)$.

Set $\bs(f)=(s_0,s_1,\ldots)$ and $\bs(f_{\reg})=(s_0^{\reg},s_1^{\reg},\ldots)$.
Then there exists $n \geq 1$ such that
\[
s^{\reg}_0=s_0,\ 
s_1^{\reg}=s_1,\ldots,
s_{n-1}^{\reg}=s_{n-1},
\quad \text{and} \quad
s_n^{\reg}<s_n.
\]
We note that $s_0=s_0^{\reg}=0$ by the definition of the splitting-order sequence.
By \Cref{case:p-2/p-1}(3), we have $s_1,\ldots,s_{n-1} \leq p-1$. 
By \Cref{explicit-n}, the pair
\[
(A,f^{1-\frac{1}{p^n}}p^{t_{n-1}+\frac{p-s_n}{p^n}})
\]
is not perfectoid pure, where 
\[
t_{n-1}
=\frac{p-1-s_1}{p}
+\cdots+
\frac{p-1-s_{n-1}}{p^{n-1}}
=\frac{p-1-s^{\reg}_1}{p}
+\cdots+
\frac{p-1-s^{\reg}_{n-1}}{p^{n-1}}.
\]
Then the pair $(A/f,p^{t_{n-1}+\frac{p-s_n}{p^n}})$ is not perfectoid pure by the proof of \cite{p-pure}*{Proposition~6.5}.
In particular, we have
\[
\ppt(A/f,p) \leq t_{n-1}+\frac{p-s_n}{p^n}.
\]

On the other hand, by \Cref{order-to-purity}, we have
\[
\ppt(A/f_{\reg},p)=\sum_{i \geq 1} \frac{p-1-s^{\reg}_i}{p^i}
\geq
t_{n-1}+\frac{p-1-s_n^{\reg}}{p^n}
\geq
t_{n-1}+\frac{p-s_n}{p^n}.
\]
Thus it follows that
\[
\ppt(A/f_{\reg},p) \geq \ppt(A/f,p),
\]
as desired.
\end{proof}

\section{Splitting-order sequence for graded rings}

In this section, we investigate perfectoid purity for graded rings and its relation to the local case at the homogeneous maximal ideal.  
We begin by proving that the splitting-order sequence behaves compatibly with localization (Proposition~\ref{graded-versus-local}).  
We then study graded analogues of Calabi--Yau and Fano  cases (Proposition~\ref{CY-case} and Theorem~\ref{Fano-case}),  
where we establish periodicity and eventual zeros in the splitting-order sequence,  
leading to results on perfectoid purity and BCM-regularity.

\begin{notation}\label{notation:graded}
Let $C$ be a complete divisorial valuation ring with uniformizer $p$ such that $C/(p)$ is $F$-finite.
Fix a Frobenius lift $\phi_C$ on $C$.
Let $A := C[x_1,\ldots,x_N]$, endowed with a Frobenius lift $\phi$ extending $\phi_C$ and satisfying $\phi(x_i) = x_i^p$ for $1 \le i \le N$.
We give $A$ a graded structure by setting $A_0 = C$ and $\deg(x_i) = \mu_i \in \Z_{>0}$ for $1 \le i \le N$.
Let $f \in A$ be a homogeneous element such that $(p,f)$ forms a regular sequence, and put $R := (A/f)^{\wedge p}$.
Set $\m := (p, x_1, \ldots, x_N) = (p, A_+)$ and $d := \dim(\var{A/f}) = N - 1$.

Since $\Delta(f) \in A$ is homogeneous of degree $p\deg(f)$, the module $\sQ{A,f}{l_1,\ldots,l_n}$ has a natural graded structure, and the map 
$\sPhi{A,f}{l_1,\ldots,l_n}$ is graded for all $0 \le l_1,\ldots,l_{n-1} \le p-1$ and $1 \le l_n \le p$.
We denote by $\bs(f) = (s_0, s_1, \ldots)$ the splitting-order sequence of $f$.
\end{notation}

\begin{proposition}\label{graded-versus-local}
With notation as in \cref{notation:graded}, let $\bs(f/1)$ denote the splitting-order sequence of $f/1 \in A_\m$, 
where the $\delta$-structure on $A_\m$ is induced by $\phi_\m$.
Then $\bs(f) = \bs(f/1)$.
In particular, if each term of $\bs(f)$ is at most $p-1$, then $R$ is perfectoid pure.
Furthermore, we have
\[
\ppt(R,p)=\sum_{i\ge1}\frac{p-1-s_i}{p^i}.
\]
\end{proposition}

\begin{proof}
Take integers $0 \le l_1, \ldots, l_{n-1} \le p-1$ and $1 \le l_n \le p$.
Since $\sQ{A,f}{l_1,\ldots,l_n}$ is a finite $\var{A}$-module, 
the purity of $\sPhi{A,f}{l_1,\ldots,l_n}$ is equivalent to the splitting of 
$\sPhi{A,f}{l_1,\ldots,l_n}$.
Moreover, as $\sPhi{A,f}{l_1,\ldots,l_n}$ is a graded $\var{A}$-module homomorphism, 
its splitting is equivalent to the splitting of its localization
\[
(\sPhi{A,f}{l_1,\ldots,l_n})_\m 
  = \sPhi{A_\m,f/1}{l_1,\ldots,l_n}
\]
as a homomorphism of $\var{A_\m}$-modules by \cite{IY}*{Lemma~2.10}.
By definition, this shows that $\bs(f) = \bs(f/1)$, as desired.

Finally, the last assertion follows from \cref{order-to-purity} together with \cite{IY}*{Corollary~4.27}.
\end{proof}

\begin{proposition}\label{CY-case}
With notation as in \cref{notation:graded}, assume $\deg(f)=\mu_1+\cdots+\mu_N$.
If $s_r=0$ for some $r>0$, then $s_{i+r}=s_i$ for every $i\ge 0$.
In particular, $R$ is perfectoid pure.
\end{proposition}

\begin{proof}
Let $\eta\in H^d_\m(\var{R})$ be a nonzero socle element. 
By the assumption $\deg(f)=\sum_i\mu_i$, the element $\eta$ is homogeneous of degree $0$.

Fix $n$ with $s_n\le p-1$.
Then there exists $\eta_n\in H^d_\m(\var{R})\setminus\{0\}$ such that
\begin{align*}
 &\bigl(H^d_\m(\var{R}) \xrightarrow{\sPhi{f}{s_0,\ldots,s_{n-1},s_n+1}} 
   H^d_\m(\sQ{f}{s_0,\ldots,s_{n-1},s_n+1})\bigr)(\eta) \\
 &=
 \bigl(F^n_*H^d_\m(\var{R})
   \xrightarrow{\cdot F^n_*f^{s_n}}
   F^n_*H^d_\m(\var{A/f^{s_n+1}})
   \xrightarrow{V}
   H^d_\m(\sQ{f}{s_0,\ldots,s_{n-1},s_n+1})\bigr)(\eta_n).
\end{align*}
In particular, we have
\begin{align}\label{eq:CY}
 \bigl(H^d_\m(\var{R}) \xrightarrow{\cdot f^{s_{n-1}}}
   H^d_\m(\var{A/f^{s_{n-1}+1}})
   \xrightarrow{\sPhi{f}{s_{n-1},s_n+1}}
   H^d_\m(\sQ{f}{s_{n-1},s_n+1})\bigr)(\eta_{n-1})
 \\[2pt]
 =
 \bigl(F_*H^d_\m(\var{R}) \xrightarrow{\cdot F_*f^{s_n}}
   F_*H^d_\m(\var{A/f^{s_n+1}})
   \xrightarrow{V}
   H^d_\m(\sQ{f}{s_{n-1},s_n+1})\bigr)(\eta_n).
 \notag
\end{align}
Moreover,
\begin{align}\label{eq:CY2}
 &\bigl(H^d_\m(\var{R}) \xrightarrow{\sPhi{f}{s_0,\ldots,s_n,s}} 
   H^d_\m(\sQ{f}{s_0,\ldots,s_n,s})\bigr)(\eta) \\
 &=
 \bigl(F^n_*H^d_\m(\var{R}) \xrightarrow{\cdot F^n_*f^{s_n}}
   F^n_*H^d_\m(\var{A/f^{s_n+1}})
   \xrightarrow{V \circ \sPhi{f}{s_n,s}}
   H^d_\m(\sQ{f}{s_0,\ldots,s_n,s})\bigr)(\eta_n).\notag
\end{align}

Since $V$ is injective, the map $\sPhi{f}{s_0,\ldots,s_n,s}$ is pure 
if and only if the element
\[
\bigl(H^d_\m(\var{R}) \xrightarrow{\cdot f^{s_n}}
   H^d_\m(\var{A/f^{s_n+1}})
   \xrightarrow{\sPhi{f}{s_n,s}}
   H^d_\m(\sQ{f}{s_n,s})\bigr)(\eta_n)
\]
is nonzero.

Since $s_r=0$, we have $s_1,\ldots,s_{r-1}\le p-1$ by the definition of $\bs(f)$.
Then $\eta_r$ satisfies
\begin{align*}
 &\bigl(H^d_\m(\var{R}) \xrightarrow{\sPhi{f}{s_0,\ldots,s_{r-1},1}} 
   H^d_\m(\sQ{f}{s_0,\ldots,s_{r-1},1})\bigr)(\eta) \\
 &=
 \bigl(F^r_*H^d_\m(\var{R}) \xrightarrow{V}
   H^d_\m(\sQ{f}{s_0,\ldots,s_{r-1},1})\bigr)(\eta_r).
\end{align*}
Since both $\sPhi{f}{s_0,\ldots,s_{r-1},1}$ and $V$ are graded, the element $\eta_r$ is homogeneous of degree $0$.
Hence there exists $a\in R^\times$ such that $\eta_0=a\eta_r$.
Then we have
\[
\bigl(H^d_\m(\var{R})
   \xrightarrow{\sPhi{f}{s_r,s}}
   H^d_\m(\sQ{f}{s_r,s})\bigr)(\eta_r)
   = a\,\bigl(H^d_\m(\var{R})
   \xrightarrow{\sPhi{f}{0,s}}
   H^d_\m(\sQ{f}{0,s})\bigr)(\eta),
\]
and hence $s_{r+1}=s_1\le p-1$.
By \eqref{eq:CY}, we further have $\eta_1=a^p\eta_{r+1}$.
Repeating this argument inductively gives $\eta_i=a^{p^i}\eta_{i+r}$ and $s_i=s_{i+r}$ for all $i\ge 0$.

Therefore $s_n\le p-1$ for all $n$.
Applying \cref{graded-versus-local} and \cref{order-to-purity}, we conclude that $R$ is perfectoid pure.
\end{proof}

\begin{theorem}\label{Fano-case}
With notation as in \cref{notation:graded}, assume that $m := \mu_1 + \cdots + \mu_N - \deg(f) > 0$.
Suppose that 
$K := \Ker(F \colon H^d_\m(\var{R}) \to H^d_\m(\var{R}))$ 
has finite length, and set
\[
t := \min\{\, l \in \Z \mid K_l \neq 0 \,\}.
\]
If there exists $r \ge 1$ such that 
$-p^r m < t$ and $s_r = 0$, 
then the following hold:
\begin{enumerate}
    \item $s_i = 0$ for every $i > r$, and in particular, $R$ is perfectoid pure.
    \item If $\Spec(\var{R}) \setminus \{\m\}$ is strongly $F$-regular, 
    then $R_\m$ is perfectoid BCM-regular.
\end{enumerate}
\end{theorem}

\begin{proof}
Let $\eta \in H^d_\m(\var{R})$ be a nonzero socle element; 
then $\eta$ is homogeneous of degree $-m$.

For each $n$ with $s_n \le p-1$, take elements 
$\eta_n \in H^d_\m(\var{R})$ as in the proof of \cref{CY-case}. 
Then each $\eta_n$ is homogeneous of degree 
$-p^n m + s_n \deg(f)$.
Since $-p^r m  = \deg(\eta_r) < t$, we have $\eta_r \notin K$.  
Therefore,
\[
\bigl(
H^d_\m(\var{R})
  \xrightarrow{\ \sPhi{f}{0,1}\ }
  H^d_\m(\sQ{f}{0,1})
  \simeq F_* H^d_\m(\var{R})
\bigr)(\eta_r)
= (F \colon H^d_\m(\var{R}) \to F_*H^d_\m(\var{R}))(\eta_r)
\neq 0.
\]
By \eqref{eq:CY2}, this implies $s_{r+1} = 0$.
Repeating the same argument inductively, we obtain (1).

\smallskip

Next, assume that $\Spec(\var{R}) \setminus \{\m\}$ is strongly $F$-regular.  
Then the tight closure of $0$ in $H^d_\m(\var{R})$, denoted by $0^*$, has finite length.
By (1), there exists $n$ such that $\eta_n \notin 0^*$ and $s_n = 0$.

Let $R \to P$ be a map to a perfectoid BCM-algebra $P$, 
and for each $i \ge 0$ set
\[
\tau_i := 
\bigl(H^d_\m(\var{R}) \longrightarrow H^d_\m(\var{P})\bigr)(\eta_i),
\quad
\text{where } \eta_0 := \eta.
\]
Since 
\[
\Ker\bigl(H^d_\m(\var{R}) \to H^d_\m(\var{P})\bigr) \subseteq 0^*,
\]
we have $\tau_n \ne 0$.

Furthermore, by the functoriality of $\Phi$ (see \cref{functoriality}), 
we obtain
\[
\sPhi{B,d}{s_0,\ldots,s_{n-1},1}(\tau_0)
  = V(F^n_*(\tau_n)) \ne 0,
\]
where $(B,(d))$ is a perfect prism satisfying $B/(d) \simeq P$.
Hence $\tau_0 \neq 0$, which implies that the map $R \to P$ is pure.
Therefore, $R_\m$ is perfectoid BCM-regular, as desired.
\end{proof}

\section{Examples}
Finally, we present explicit examples illustrating how the splitting-order sequence and perfectoid purity behave in concrete settings. These computations demonstrate the rationality and stability phenomena established in the previous sections.

\begin{notation}\label{example}
With the notation as in \cref{fedder}, we assume 
\[
A:=W(k)[[x_1,\ldots,x_N]]
\]
for a perfect field $k$ and $\phi(x_i)=x_i^p$ for $1 \leq i \leq N$.
We set $\var{x}_i:=(A \to \var{A})(x_i)$ for $1 \leq i \leq N$.
Since 
\[
\{F_*(\var{x}_1^{i_1}\cdots \var{x}_N^{i_N}) \mid 0 \leq i_1,\ldots,i_N \leq  p-1\}
\]
is a basis of $F_*\var{A}$ over $\var{A}$, we can take a generator \(u \in \Hom_{\var{A}}(F_*\var{A},\var{A})\) to be the dual basis element corresponding to $F_*(\var{x}_1^{p-1}\cdots \var{x}_N^{p-1})$.
\end{notation}

\begin{remark}
Let $f \in A' := W(k)[x_1, \ldots, x_N]$ be a homogeneous element.
Then $A/f$ is perfectoid pure if and only if $(A'/f)^{\wedge p}$ is perfectoid pure,
and moreover we have
\[
\ppt(A/f,p) = \ppt((A'/f)^{\wedge p},p)
\]
by \cref{graded-versus-local} together with the proof of \cite{p-pure}*{Lemma~4.8}.
\end{remark}

\begin{theorem}\label{Fermat-CY}
With the notation as in \cref{example}, let \(f = x_1^N + \cdots + x_N^N\) and assume \(p > N\).
For each \(e \ge 1\), take an integer \(0 \le s_e \le N-2\) satisfying \(s_e + 1 \equiv p^e \pmod{N}\).
Then the splitting-order sequence of \(f\) is \(\bs(f) = (0, s_1, s_2, \ldots)\).
In particular, \(A/f\) is perfectoid pure.
\end{theorem}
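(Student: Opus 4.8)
The plan is to use the computational criterion from \cref{criterion} together with the explicit description of the ideals $I(l_1,\ldots,l_n)$ afforded by the Fermat shape of $f$, and then read off $\bs(f)$ by induction. First I would record that for $f = x_1^N + \cdots + x_N^N$ with $p > N$, the element $\Delta(f)$ is easy to control: since $\phi(f) = x_1^{pN} + \cdots + x_N^{pN} = f(x_1^p,\ldots,x_N^p)$ and $f^p$ expands via the multinomial theorem, one has $\Delta(f) = (f^p - \phi(f))/p$, and modulo $p$ this is a concrete polynomial whose monomials all have total degree $pN$ and whose exponents (being $p$ times a composition of $N$ into $N$ parts, minus the ``diagonal'' terms) are divisible by $p$ only in the degenerate cases. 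The key point is that $\var{\Delta(f)} = -\sum_{\bn} \frac{1}{p}\binom{p}{\bn} \var{x}^{N\bn}$, a nonzero element of $\var{A}$, and I would need to identify precisely which powers of the $\var{x}_i$ it contributes, because the criterion involves $u(F_*(\Delta(f)^{l}\,\cdot))$ and $u$ is the dual basis functional picking out the coefficient of $F_*(\var{x}_1^{p-1}\cdots\var{x}_N^{p-1})$.

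Next I would set up the induction. Since $\var{A/f}$ is regular (the cone over a smooth Fermat hypersurface is regular only at... — actually here $\var{A/f}$ is \emph{not} regular in general, so I do not get $s_n = 0$ for free; instead I use the criterion directly). I would compute $s_1$: by \cref{criterion}, $s_1 = \max\{0 \le s \le p \mid I(s) \subseteq \m^{[p]}\}$ where $I(s) = f^{p-s}\var{A}$, and $f^{p-s} \in \m^{[p]}$ iff every monomial of $f^{p-s}$ lies in $(\var{x}_1^p,\ldots,\var{x}_N^p)$; a multinomial count shows this holds iff $N(p-s) \ge$ (something forcing all exponents $\ge p$), and the sharp threshold is exactly $s_1 = 0$ since $f^p$ has the monomial $x_i^{Np}$... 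I would instead argue $f^{p-1}$ contains $x_1^{N(p-1)}$ type monomials but also mixed ones, and the criterion forces $s_1 = 0$ because $f^{p} = \phi(f) + p\cdot(\ldots) \equiv 0$ only after the Frobenius twist is applied at the next level. The honest mechanism: $I(s_1,\ldots,s_{n-1},s)$ by the recursion and an induction (mirroring Step 1 of the proof of \cref{regular-case}, but now $\Delta(f)$ is \emph{not} a unit) will be shown to equal $u^{n-1}(F^{n-1}_*(f^{p^n - \sum p^{n-i}l_i}\cdot \Delta(f)^{(\text{weighted sum of }l_i)}\,\var{A})) + (\text{lower terms})$, and I would track the $\var{x}_1^{p-1}\cdots\var{x}_N^{p-1}$-coefficient after applying $u^{n-1}$ to determine membership in $\m^{[p]}$.

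The main obstacle — and the heart of the argument — is the carry/congruence bookkeeping that produces the condition $s_e + 1 \equiv p^e \pmod N$. Concretely: applying $u^{n-1} \circ F^{n-1}_*$ to $f^{M} \Delta(f)^{k}$ and asking whether the result lies in $\m^{[p]}$ amounts to asking whether $f^{M}\Delta(f)^{k}$ has a monomial of the form $\var{x}_1^{a_1}\cdots\var{x}_N^{a_N}$ with each $a_i \equiv p^{n-1}(p-1) \pmod{p^n}$ and $a_i < p^n$, i.e.\ $a_i = p^{n-1}(p-1)$ exactly, forcing total degree $N p^{n-1}(p-1)$; matching this against the total degree $NM + NpK'$ of monomials of $f^M\Delta(f)^k$ and solving the resulting linear Diophantine constraint for the largest admissible $s = s_n$ is where $p^e \bmod N$ enters. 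I expect the clean statement to emerge as: $s_n$ is forced to be the unique residue in $\{0,\ldots,N-2\}$ with $s_n + 1 \equiv p^n \pmod N$, and the $s_n \le N - 2 \le p - 2 \le p-1$ bound is then automatic, so \cref{order-to-purity}(1) gives perfectoid purity. I would also double-check the base case $s_0 = 0$ is consistent and that the ``once some $s_i = p$, all later are $p$'' clause never triggers, precisely because $s_n \le N-2 < p$ at every stage. The delicate part throughout is that $\Delta(f)$ is a nonunit here (unlike the regular case), so the ideal computation in Step 1 needs the extra input that $\Delta(f)^k$, after the Frobenius twists, still contributes a monomial hitting the socle — this is exactly the multinomial coefficient $\binom{p}{\bn}$ being a unit mod $p$ for suitable $\bn$ since $p > N$, which is why the hypothesis $p > N$ is used.
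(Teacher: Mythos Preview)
Your overall strategy---unwind the recursion for the ideals $I(l_1,\ldots,l_n)$ via \cref{criterion} and read off each $s_e$ from monomial/congruence conditions---is the same as the paper's. But two concrete points are off.

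First, the base case is confused: you assert $s_1=0$, whereas $s_1$ is determined by $s_1+1\equiv p\pmod N$ and is typically nonzero. The correct argument is that, writing $X:=x_1\cdots x_N$, one has $f^{\,p-s_1-1}\equiv (\text{unit})\cdot X^{\,p-s_1-1}\pmod{\m^{[p]}}$: the congruence forces $N\mid(p-s_1-1)$, a pigeonhole shows $X^{p-s_1-1}$ is the \emph{unique} monomial of $f^{p-s_1-1}$ outside $\m^{[p]}$, and $p>N$ makes the multinomial coefficient a unit. This immediately gives $I(s_1)\subseteq\m^{[p]}$ and $I(s_1+1)\not\subseteq\m^{[p]}$.

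Second, and this is the real gap, your ``global unrolling'' to $u^{n-1}\bigl(F^{n-1}_*(f^M\Delta(f)^K\var{A})\bigr)$ leads you to test $f^M\Delta(f)^K\in\m^{[p^n]}$, and you propose to do this by looking for one specific monomial. The reduction $\Delta(f)\equiv p^{-1}f^p\pmod{(x_i^{pN})}$ that underlies any such analysis is only valid modulo $(x_i^{pN})$, and for $n\ge 2$ one has $(x_i^{pN})\not\subseteq\m^{[p^n]}$ since $pN<p^2$; so you cannot simply replace $\Delta(f)^K$ by $p^{-K}f^{pK}$ in one shot. The paper sidesteps this by \emph{not} unrolling globally. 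It proves, by a nested induction on $e$, that
\[
I(s_1,\ldots,s_{n-1},s)\equiv u^{e}\!\bigl(F^{e}_*(X^{\,p^{e+1}-p(s_e+1)}\,\Delta(f)^{s_e}\,I(s_{e+1},\ldots,s_{n-1},s))\bigr)\pmod{\m^{[p]}},
\]
using at each step the identity
\[
\Delta(f)^{s_{e-1}}f^{\,p-s_e-1}\equiv(\text{unit})\cdot X^{\,p(s_{e-1}+1)-(s_e+1)}\pmod{(x_1^{pN},\ldots,x_N^{pN})},
\]
which holds exactly because $p(s_{e-1}+1)\equiv s_e+1\pmod N$. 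The point is that the already-accumulated factor $X^{\,p^{e}-p(s_{e-1}+1)}$ is large enough that multiplying the error term $(x_i^{pN})$ by it lands in $\m^{[p^{e}]}$ (here one uses $s_{e-1}+1\le N-1<N$). This step-by-step absorption of errors is the engine of the proof; your proposal gestures at ``multinomial count'' and ``carry/congruence bookkeeping'' but does not isolate this mechanism, and without it the direct analysis of $f^M\Delta(f)^K$ modulo $\m^{[p^n]}$ does not go through.
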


\begin{proof}
We set \(X := x_1 \cdots x_N\) and prove that
\[
I(s_1,\ldots,s_n) \subseteq \m^{[p]}
\quad \text{and} \quad
I(s_1,\ldots,s_n + 1) \nsubseteq \m^{[p]}
\]
for all \(n \ge 1\) by induction on \(n\).

For \(n = 1\), since
\[
f^{p - s_1 - 1} \equiv X^{p - s_1 - 1} \pmod{\m^{[p]}},
\]
we obtain \(f^{p - s_1} \in \m^{[p]}\), as desired.

Next, we fix \(n \ge 2\) and assume the claim holds for all smaller \(n\).
We show that
\begin{equation}\label{eq:fermat-I}
I(s_1,\ldots,s_{n-1},s)
\equiv
u^{e}\bigl(F^{e}_*(X^{p^{e+1} - p(s_e + 1)} \Delta(f)^{s_e} I(s_{e+1},\ldots,s_{n-1},s))\bigr)
\pmod{\m^{[p]}}
\end{equation}
for every \(1 \le e \le n-1\) and \(0 \le s \le p\)
by induction on \(e\).

For \(e=1\), the equality follows directly from the definition of \(I(\cdot)\) and the congruence \(f^{p - s_1 - 1} \equiv X^{p - s_1 - 1} \pmod{\m^{[p]}}\).
Suppose \(e \ge 2\), and assume that \eqref{eq:fermat-I} holds for \(e-1\).
We claim that
\begin{equation}\label{eq:fermat-delta}
\Delta(f)^{s_{e-1}} f^{p - s_e - 1}
\equiv
X^{p(s_{e-1}+1) - (s_e + 1)}
\pmod{(x_1^{pN}, \ldots, x_N^{pN})}.
\end{equation}
Indeed, since \(f = x_1^N + \cdots + x_N^N\), we have
\begin{align*}
\Delta(f)^{s_{e-1}} f^{p - s_e - 1}
&= \frac{1}{p^{s_{e-1}}} (f^p - (x_1^{pN} + \cdots + x_N^{pN}))^{s_{e-1}} f^{p - s_e - 1} \\
&\equiv \frac{1}{p^{s_{e-1}}} f^{p(s_{e-1}+1) - (s_e + 1)}
\pmod{(x_1^{pN}, \ldots, x_N^{pN})}.
\end{align*}
By the definition of \(s_e\), we have \(p(s_{e-1}+1) \equiv s_e + 1 \pmod{N}\), 
and the coefficient of \(X^{p(s_{e-1}+1)-(s_e+1)}\) in \(f^{p(s_{e-1}+1)-(s_e+1)}\) is nonzero and not divisible by \(p^{s_{e-1}+1}\). 
Thus, \eqref{eq:fermat-delta} holds.

It follows that
\begin{equation}\label{eq:fermat-X}
X^{p^{e} - p(s_{e-1}+1)} \Delta(f)^{s_{e-1}} f^{p - s_e - 1}
\equiv
X^{p^{e} - (s_e + 1)} \pmod{\m^{[p^{e}]}}
\end{equation}
and
\(X^{p^{e} - p(s_{e-1}+1)} \Delta(f)^{s_{e-1}} f^{p - s_e} \in \m^{[p^{e}]}\).
Substituting \eqref{eq:fermat-X} into the definition of \(I(\cdot)\), we obtain
\begin{align*}
I(s_1,\ldots,s_n)
&\equiv u^{e-1}\bigl(F^{e-1}_*(X^{p^{e} - p(s_{e-1}+1)} \Delta(f)^{s_{e-1}} I(s_e,\ldots,s_n))\bigr) \\
&\equiv u^{e-1}\bigl(F^{e-1}_*(X^{p^{e} - (s_e + 1)} u(F_*(\Delta(f)^{s_e} I(s_{e+1},\ldots,s_n))))\bigr) \\
&= u^{e}\bigl(F^{e}_*(X^{p^{e+1} - p(s_e + 1)} \Delta(f)^{s_e} I(s_{e+1},\ldots,s_n))\bigr)
\pmod{\m^{[p]}},
\end{align*}
which proves \eqref{eq:fermat-I} for all \(e\).

Taking \(e = n-1\) in \eqref{eq:fermat-I}, we have
\begin{align*}
I(s_1,\ldots,s_{n-1},s)
&\equiv
u^{n-1}\bigl(F^{n-1}_*(X^{p^{n} - p(s_{n-1}+1)} \Delta(f)^{s_{n-1}} f^{p-s}\var{A})\bigr) \\
&\equiv 
\begin{cases}
0 & \text{if } s = s_n,\\[3pt]
X^{p-1} & \text{if } s = s_n + 1,
\end{cases}
\pmod{\m^{[p]}},
\end{align*}
and the result follows.
\end{proof}

\begin{proposition}\label{several-criteria}
With the notation as in \cref{example}, assume that \(f \in \m^{[p]}\).
\begin{enumerate}
\item If 
\[
f^{p-1}\Delta(f)^{p-1} \equiv  v x_1^{p^2-1}\cdots x_N^{p^2-1} 
\pmod{\m^{[p^2]}}
\]
for some $v \in A^{\times}$,
then we have \(\bs(f) = (0,p-1,0,p-1,0,\ldots)\).
Hence \(A/f\) is perfectoid pure, and 
\(\ppt(A/f,p) = \frac{1}{p+1}\) by \cref{order-to-purity}.

\item If \(\Delta(f)^{p-1} \in \m^{[p^2]}\), then \(A/f\) is not perfectoid pure.

\item If 
\[
\Delta(f)^{p-1} \equiv v (x_1 \cdots x_N)^{p(p-1)} \pmod{\m^{[p^2]}}
\]
for some $v \in A^{\times}$,
then $\bs(f)=(0,p-1,p-1,\ldots)$.
Hence \(A/f\) is perfectoid pure with \(\ppt(A/f,p) = 0\) by \cref{order-to-purity}.
\end{enumerate}
\end{proposition}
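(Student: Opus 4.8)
The plan is to prove all three statements by reducing, via the computational criterion \cref{criterion}, to deciding whether the explicit ideals $I(l_1,\dots,l_n)\subseteq\var A$ lie in $\m^{[p]}$. Write $X:=x_1\cdots x_N$. Since in \cref{example} the generator $u$ is dual to $F_*(\var x_1^{p-1}\cdots\var x_N^{p-1})=F_*(X^{p-1})$, I will repeatedly use the two identities $u\bigl(F_*\bigl((X^{p-1})^p g\bigr)\bigr)=X^{p-1}\,u(F_*g)$ for $g\in\var A$, and $u(F_*(\m^{[p^2]}\var A))\subseteq\m^{[p]}\var A$ (because $\m^{[p^2]}\var A=(\var x_i^{p^2})=(\var x_i^p)^{[p]}$), together with $X^{p-1}\m\subseteq\m^{[p]}$. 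As $f\in\m^{[p]}$, the recursion gives $I(p-1)=f\var A\subseteq\m^{[p]}$ while $I(p)=\var A\not\subseteq\m^{[p]}$, so \cref{criterion} yields $s_1=p-1$ in all three cases.

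For (3) and (1) the argument is iterative. For (3) I would first compute $\Delta(f)$ modulo $\m^{[p^2]}$: since $\Delta(x_i)=0$ and $\phi(p)=p$ one has $\Delta(pX)=(p^{p-1}-1)X^p\equiv -X^p\pmod{\m^{[p^2]}}$, and in the $\delta$-expansion of $\Delta(f'+pX)$ every mixed term $\tfrac1p\binom pk (f')^k(pX)^{p-k}$ ($1\le k\le p-1$) is divisible by $p$, so with $\Delta(f')\in\m^{[p^2]}$ one gets $\Delta(f)\equiv -X^p$, hence $\Delta(f)^{p-1}\equiv(-1)^{p-1}(X^{p-1})^p\pmod{\m^{[p^2]}}$. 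Setting $\Theta(J):=u(F_*(\Delta(f)^{p-1}J))$, the basic identities give $\Theta(J)\equiv(-1)^{p-1}X^{p-1}\,u(F_*J)\pmod{\m^{[p]}}$ for every ideal $J$; from this one checks $\Theta(\m^{[p]})\subseteq\m^{[p]}$ and $\Theta(f\var A)\subseteq\m^{[p]}$ (using $\var f\in(\var x_i^p)$), hence $\Theta^k(f\var A)\subseteq\m^{[p]}$ for all $k\ge0$, and $\Theta^k(\var A)\equiv(\text{unit})\cdot X^{p-1}\var A\pmod{\m^{[p]}}$, which is not contained in $\m^{[p]}$, for every $k\ge1$. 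Since the recursion telescopes to $I(\underbrace{p-1,\dots,p-1}_{n-1},p-1)=\sum_{k=0}^{n-1}\Theta^k(f\var A)$ and $I(\underbrace{p-1,\dots,p-1}_{n-1},p)=\Theta^{n-1}(\var A)+\sum_{k=0}^{n-2}\Theta^k(f\var A)$, \cref{criterion} and an induction on $n$ give $s_n=p-1$ for all $n\ge1$, i.e.\ $\bs(f)=(0,p-1,p-1,\dots)$, and \cref{order-to-purity} then gives perfectoid purity with $\ppt(A/f,p)=\sum_{n\ge1}\tfrac{p-1-(p-1)}{p^n}=0$. Part (1) follows the same pattern: the hypothesis $f^{p-1}\Delta(f)^{p-1}\equiv X^{p-1}\pmod{\m^{[p^2]}}$ puts the unit $u(F_*(\Delta(f)^{p-1}f^{p-1}))=1+(\text{element of }\m)$ into $I(p-1,1)$, so $s_2=0$ by \cref{criterion}; one then establishes, using the recursion twice and the hypothesis (which replaces $\Delta(f)^{p-1}f^{p-1}$ by the unit‑coefficient monomial $X^{p-1}$ modulo $\m^{[p^2]}$), an inductive self‑similarity $I(p-1,0,l_3,\dots,l_n)\subseteq\m^{[p]}\iff I(l_3,\dots,l_n)\subseteq\m^{[p]}$, and feeding this back through \cref{criterion} yields $s_{2k-1}=p-1$, $s_{2k}=0$ for all $k\ge1$, so $\bs(f)=(0,p-1,0,p-1,\dots)$ and $\ppt(A/f,p)=\sum_{k\ge1}\tfrac{p-1}{p^{2k}}=\tfrac1{p+1}$ by \cref{order-to-purity}.

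For (2) the ideal computation is immediate: $\Delta(f)^{p-1}\in\m^{[p^2]}$ forces $I(p-1,p)=u(F_*(\Delta(f)^{p-1}\var A))+f\var A\subseteq u(F_*(\m^{[p^2]}\var A))+f\var A\subseteq\m^{[p]}$, so by \cref{criterion} the map $\sPhi{f}{0,p-1,p}$ is not pure, hence (as $\var{A/f}$ is Cohen--Macaulay of dimension $d:=\dim\var{A/f}$ with simple socle in $H^d_\m(\var{A/f})$) it kills a nonzero socle element $\eta$. Choosing a perfect prism $(B,(d))$, a $\delta$-morphism $\varphi\colon A\to B$ and $y\in B$ with $\phi(y)=y^p$, $y-f\in(d)$ and $A\to B/(d)$ faithfully flat as in \cref{good-perfd}, and setting $S:=B/(y^{1/p^\infty})$, $R_\infty:=B/(d,y^{1/p^\infty})$, functoriality (\cref{functoriality}, applied to the $\delta$-morphism $A\to S$, under which the image of $f$ lies in $(d)$) gives a commutative square as in \eqref{eq:Phi}, whence $\sPhi{S,d}{0,p-1,p}(\eta')=0$ for the image $\eta'$ of $\eta$ in $H^d_\m(\var{R_\infty})$. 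By \cref{perfectoid-Q}, $\sQ{S,d}{0,p-1,p}\simeq\var S/(T^{\,0+\frac{p-1}p+\frac p{p^2}})=\var S/(T)$, and under this isomorphism $\sPhi{S,d}{0,p-1,p}$ is the canonical isomorphism $\var{R_\infty}=\var{S/(d)}\xrightarrow{\ \simeq\ }\var S/(T)$, hence injective on $H^d_\m$, so $\eta'=0$. But if $A/f$ were perfectoid pure, then $A/f\to B/(d,f^{1/p^\infty})=R_\infty$ would be pure by \cref{good-perfd-cover}, forcing $H^d_\m(\var{A/f})\to H^d_\m(\var{R_\infty})$ to be injective and $\eta'\ne0$ --- a contradiction.

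The step I expect to be the main obstacle is the inductive monomial bookkeeping underlying the auxiliary facts in the iterative arguments: the stability statements $\Theta(\m^{[p]})\subseteq\m^{[p]}$ and $\Theta^k(f\var A)\subseteq\m^{[p]}$ in (3), and the self-similarity $I(p-1,0,l_3,\dots,l_n)\subseteq\m^{[p]}\iff I(l_3,\dots,l_n)\subseteq\m^{[p]}$ in (1). In both cases one is iterating the semilinear operator $g\mapsto u(F_*(\Delta(f)^{p-1}g))$ and must control precisely which monomials survive modulo $\m^{[p]}$ after repeated application, the crux being that $u$ extracts the $X^{p-1}$-component while $u(F_*(\m^{[p^2]}\var A))\subseteq\m^{[p]}$ and $X^{p-1}\m\subseteq\m^{[p]}$ absorb the errors; once these are in place, (1) and (3) follow from \cref{criterion} and \cref{order-to-purity}, and the rest of (2) from \cref{perfectoid-Q} as above.
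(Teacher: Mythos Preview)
Your approach for all three parts is the same as the paper's: reduce via \cref{criterion} to deciding whether the ideals $I(l_1,\dots,l_n)$ lie in $\m^{[p]}$, and compute these using the identities $u\bigl(F_*((X^{p-1})^p g)\bigr)=X^{p-1}u(F_*g)$, $u(F_*\m^{[p^2]})\subseteq\m^{[p]}$, and $X^{p-1}\m\subseteq\m^{[p]}$. Parts (2) and (3) are correct and match the paper. For (3) your telescoping of the recursion into powers of $\Theta$ is exactly the paper's induction on $J_m$ and $J'_m$. For (2) the paper simply computes $s_1=p-1$, $s_2=p$ and cites \cref{case:p-2/p-1}(2) (the hypothesis there is stated for $r\ge2$, but its proof works verbatim for $r=1$); your direct argument via \cref{functoriality} and \cref{perfectoid-Q} is precisely the proof of that theorem specialized to this case, so valid but longer.

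There is a genuine gap in (1), though it originates in a typo in the statement rather than in your method. The intended hypothesis is
\[
f^{p-1}\Delta(f)^{p-1}\equiv X^{p^2-1}\pmod{\m^{[p^2]}}
\]
(exponent $p^2-1$, not $p-1$): this is what the paper's own proof uses throughout, and what \cref{example2} actually verifies. Under the literal hypothesis you took, your computation $u(F_*(\Delta(f)^{p-1}f^{p-1}))=1+(\text{elt of }\m^{[p]})$ is correct and $s_2=0$ follows, but the self-similarity
\[
I(p-1,0,l_3,\dots,l_n)\subseteq\m^{[p]}\ \iff\ I(l_3,\dots,l_n)\subseteq\m^{[p]}
\]
then fails. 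Unwinding the recursion with $J:=I(l_3,\dots,l_n)$ gives $I(p-1,0,l_3,\dots,l_n)\equiv u\bigl(F_*(X^{p-1}u(F_*J))\bigr)\pmod{\m^{[p]}}$, and containment in $\m^{[p]}$ becomes $X^{p-1}u(F_*J)\subseteq\m^{[p^2]}$, i.e.\ $u(F_*J)\subseteq\m^{[p^2]}:X^{p-1}=(x_i^{p^2-p+1})$, which is strictly stronger than $u(F_*J)\subseteq\m$ (the condition equivalent to $J\subseteq\m^{[p]}$). With the corrected exponent one instead gets $u\bigl(F_*(X^{p^2-1}K)\bigr)=X^{p-1}u\bigl(F_*(X^{p-1}K)\bigr)$, and then iterating $X^{p-1}L\subseteq\m^{[p]}\iff L\subseteq\m$ and $u(F_*L)\subseteq\m\iff L\subseteq\m^{[p]}$ yields exactly your self-similarity. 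At that point your inductive scheme and the paper's explicit induction on $J_{2m}$, $J_{2m-1}$, $J'_{2m+1}$ are the same argument. Note also that under the corrected hypothesis $u(F_*(\Delta(f)^{p-1}f^{p-1}))\equiv X^{p-1}\pmod{\m^{[p]}}$, which is not a unit but still witnesses $I(p-1,1)\not\subseteq\m^{[p]}$, so your conclusion $s_2=0$ survives.
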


\begin{proof}
\textbf{Proof of \textup{(1)}.}
We claim that
\[
J_{2m}
:= I(\underbrace{p-1,0,p-1,\ldots,p-1,1}_{2m\text{ terms}})
\equiv (x_1\cdots x_N)^{p-1}\var{A}
\pmod{\m^{[p]}}
\]
for all \(m \ge 1\), and proceed by induction on \(m\).

For \(m = 1\),
\begin{align*}
I(p-1,1)
&= u\bigl(F_*(\Delta(f)^{p-1}f^{p-1}\var{A})\bigr) + f\var{A} \\
&\equiv (x_1\cdots x_N)^{p-1}\var{A}
\pmod{\m^{[p]}}.
\end{align*}
For \(m \ge 2\),
\begin{align*}
J_{2m}
&= u\bigl(F_*(\Delta(f)^{p-1}f^{p-1}u(F_*J_{2m-2}))\bigr) + f\var{A} \\
&\equiv u\bigl(F_*((x_1\cdots x_N)^{p^2-1}u(F_*J_{2m-2}))\bigr)
\overset{(\star_1)}{\equiv} (x_1\cdots x_N)^{p-1}
\pmod{\m^{[p]}},
\end{align*}
where \((\star_1)\) follows from the induction hypothesis.

Next, we show that
\[
J_{2m-1}
:= I(\underbrace{p-1,0,p-1,\ldots,0,p-1}_{(2m-1)\text{ terms}})
\subseteq \m^{[p]}
\]
for all \(m \ge 1\).

For \(m = 1\),
\[
J_1 = I(p-1) = f\var{A} \subseteq \m^{[p]}.
\]
For \(m \ge 2\),
\begin{align*}
J_{2m-1}
&= u\bigl(F_*(\Delta(f)^{p-1}f^{p-1}u(F_*J_{2m-3}))\bigr) + f\var{A} \\
&\equiv u\bigl(F_*((x_1\cdots x_N)^{p^2-1}u(F_*J_{2m-3}))\bigr)
\overset{(\star_2)}{\equiv} 0
\pmod{\m^{[p]}},
\end{align*}
where \((\star_2)\) follows from the induction hypothesis.

Finally, we prove that
\[
J'_{2m+1}
:= I(\underbrace{p-1,0,p-1,\ldots,0,p}_{(2m+1)\text{ terms}})
\equiv (x_1\cdots x_N)^{p-1}
\pmod{\m^{[p]}}
\]
for all \(m \ge 1\).

For \(m = 1\),
\[
J'_3 = I(p-1,0,p)
= u\bigl(F_*(\Delta(f)^{p-1}f^{p-1})\bigr) + f\var{A}
\equiv (x_1\cdots x_N)^{p-1}
\pmod{\m^{[p]}}.
\]
For \(m \ge 2\),
\begin{align*}
J'_{2m+1}
&= u\bigl(F_*(\Delta(f)^{p-1}f^{p-1}u(F_*J'_{2m-1}))\bigr) + f\var{A} \\
&\equiv u\bigl(F_*((x_1\cdots x_N)^{p^2-1}u(F_*J'_{2m-1}))\bigr)
\overset{(\star_3)}{\equiv} (x_1\cdots x_N)^{p-1}
\pmod{\m^{[p]}},
\end{align*}
where \((\star_3)\) follows from the induction hypothesis.

Combining the above computations, we obtain
\[
\bs(f) = (0,p-1,0,p-1,0,\ldots),
\]
as claimed.

\bigskip
\textbf{Proof of \textup{(2)}.}
Since \(f \in \m^{[p]}\), we have \(s_1 = p-1\).
Moreover,
\[
I(p-1,p) = u(F_*\Delta(f)^{p-1}) + f\var{A} \subseteq \m^{[p]}.
\]
Hence \(s_2 = p\).
By \cref{case:p-2/p-1}(2), it follows that \(A/f\) is not perfectoid pure.

\bigskip
\textbf{Proof of \textup{(3)}.}
We claim that
\[
J_m
:= I(\underbrace{p-1,p-1,\ldots,p-1,p}_{m\text{ terms}})
\equiv (x_1\cdots x_N)^{p-1}\var{A}
\pmod{\m^{[p]}}
\]
for all \(m \ge 2\), by induction on \(m\).

For \(m = 2\),
\begin{align*}
I(p-1,p)
&= u\bigl(F_*(\Delta(f)^{p-1}\var{A})\bigr) + f\var{A} \\
&\equiv u\bigl(F_*((x_1\cdots x_N)^{p(p-1)}\var{A})\bigr)
\equiv (x_1\cdots x_N)^{p-1}\var{A}
\pmod{\m^{[p]}}.
\end{align*}
For \(m \ge 3\),
\begin{align*}
J_m
&= u\bigl(F_*(\Delta(f)^{p-1}J_{m-1})\bigr) + f\var{A} \\
&\equiv u\bigl(F_*((x_1\cdots x_N)^{p(p-1)}J_{m-1})\bigr)
\overset{(\star_4)}{\equiv} (x_1\cdots x_N)^{p-1}
\pmod{\m^{[p]}},
\end{align*}
where \((\star_4)\) follows from the induction hypothesis.

Next, we show that
\[
J'_m := I(\underbrace{p-1,p-1,\ldots,p-1}_{m\text{ terms}}) 
\subseteq \m^{[p]}
\]
for all \(m \ge 1\).

For \(m = 1\),
\[
J'_1 = I(p-1) = f\var{A} \subseteq \m^{[p]}.
\]
For \(m \ge 2\),
\begin{align*}
J'_m
&= u\bigl(F_*(\Delta(f)^{p-1}J'_{m-1})\bigr) + f\var{A} \\
&\equiv u\bigl(F_*((x_1\cdots x_N)^{p(p-1)}J'_{m-1})\bigr)
\overset{(\star_5)}{\equiv} 0
\pmod{\m^{[p]}},
\end{align*}
where \((\star_5)\) follows from the induction hypothesis.

Combining these, we obtain
\[
\bs(f) = (0,p-1,p-1,p-1,p-1,\ldots),
\]
and therefore \(A/f\) is perfectoid pure with \(\ppt(A/f,p)=0\), as claimed.
\end{proof}

\begin{example}\label{example1}
Let \(N = p\) and \(f = x_1^p + \cdots + x_p^p-px_1\cdots x_p\).
Then \(f \in \m^{[p]}\) and
\[
\Delta(f)^{p-1} = (\Delta(x_1^p+\cdots+x_p^p) + (x_1 \cdots x_p)^p)^{p-1} \equiv v (x_1 \cdots x_p)^{p(p-1)} \pmod{\m^{[p^2]}},
\]
where
\[
v=\sum^{p-1}_{i=0} \frac{1}{p^i}\binom{p-1}{i}\binom{pi}{i \cdots i} \in A^{\times}.
\]
Hence, by \cref{several-criteria}\textup{(3)}, the quotient \(A/f\) is perfectoid pure, and 
\[
\ppt(A/f,p) = 0.
\]
\end{example}

\begin{example}\label{example2}
Let $N=p+1$, and $f=x_1^{p+1}+\cdots+x_{p+1}^{p+1}$.
Then $f \in \m^{[p]}$ and
\[
f^{p-1}\Delta(f)^{p-1} \equiv (x_1\cdots x_{p+1})^{p^2-1} \quad \pmod{\m^{[p^2]}}. 
\]
Thus, we have $\bs(f)=(0,p-1,0,p-1,0,\ldots)$ and $A/f$ is perfectoid pure with $\ppt(A/f,p)=\frac{1}{p+1}$ by \cref{several-criteria}(1).
\end{example}

\begin{example}[cf.~\cref{ideal-different}]\label{example3}
Let \(p = 2\).
\begin{enumerate}
    \item Let \(N = 5\) and \(f = x_1^5 + \cdots + x_5^5\).
    Then \(A/f\) is not perfectoid pure, while 
    \(A/(f + 2x_1 \cdots x_5)\) is perfectoid pure with 
    \(\ppt(A/(f + 2x_1 \cdots x_5),p) = 0\) by \cref{several-criteria}\textup{(2)} and \textup{(3)}.
    Indeed, \(f \in \m^{[p]}\) and \(\Delta(f) \in \m^{[p^2]}\).

    \item Let \(N = 4\) and
    \[
    f = x_1^4 + x_2^4 + x_3^4 + x_4^4
      + x_1^2x_2^2 + x_1^2x_3^2 + x_2^2x_3^2
      + x_1x_2x_3(x_1 + x_2 + x_3).
    \]
    Then \(A/f\) is not perfectoid pure, whereas
    \(A/(f + 2x_1 \cdots x_4)\) is perfectoid pure with 
    \(\ppt(A/(f + 2x_1 \cdots x_4),p) = 0\) \cref{several-criteria} \textup{(2)} and \textup{(3)}.
    Indeed, \(f \in \m^{[p]}\) and \(\Delta(f) \in \m^{[p^2]}\).
\end{enumerate}
\end{example}

\begin{example}[Fermat K3 surface]\label{Fermat-K3-surface}
Let \(N = 4\) and \(f = x^4 + y^4 + z^4 + w^4\), where \(x := x_1\), \(y := x_2\), \(z := x_3\), and \(w := x_4\).
\begin{enumerate}
  \item If \(p = 2\), then \(f \in \m^{[p]}\) and \(\Delta(f) \in \m^{[p^2]}\).
  Hence \(A/f\) is not perfectoid pure, while \(A/(f + 2xyzw)\) is perfectoid pure by \cref{several-criteria}\,(3).
  
  \item If \(p \equiv 1 \pmod{4}\), then \(A/f\) is perfectoid pure with \(\ppt(A/f,p) = 1\),
  since \(\var{A/f}\) is \(F\)-pure.
  
  \item If \(p \equiv 3 \pmod{4}\), then \(A/f\) is perfectoid pure with \(\ppt(A/f,p) = \frac{2}{p^2 - 1}\).
  Indeed, for \(p = 3\), this follows from \cref{example2}.
  If \(p > 3\), then by \cref{Fermat-CY} we have \(\bs(f) = (0, 2, 0, 2, \ldots)\),
  and thus \(A/f\) is perfectoid pure with the desired value \(\ppt(A/f,p) = \frac{2}{p^2 - 1}\).
\end{enumerate}
\end{example}

\begin{example}
Let \(N = 5\) and \(f = x_1^4 + \cdots + x_5^4\).
\begin{itemize}
    \item If \(p = 2\), then \(f \in \m^{[2]}\) and \(\Delta(f) \in \m^{[4]}\).
    Hence \(R\) is not perfectoid pure by \cref{several-criteria}~(2).

    \item If \(p \equiv 1 \pmod{4}\) or \(p > 7\), then \(R\) is perfectoid pure with \(\ppt(R,p) = 1\).
    Indeed, \(f^{p-1}\) contains either \((x_1 \cdots x_4)^{p-1}\) or \((x_1 \cdots x_4)^{p-3}x_5^8\),
    both of which are not contained in \(\m^{[p]}\).

    \item If \(p = 3\), then \(\bs(f) = (0,2,0,0,\ldots)\).
    In particular, \(R\) is perfectoid pure with \(\ppt(R,p) = 1/3\).
    Since \(u(F_* f^{p-1}) = \m^2\), we have \(m = 1\) and \(t = -2\), where \(m, t\) are defined in \cref{Fano-case}.
    Hence it suffices to show that \(s_1 = 2\) and \(s_2 = 0\) by \cref{Fano-case}.
    Since \(f \in \m^{[3]}\), we have \(s_1 = 2\).
    Moreover,
    \[
\begin{aligned}
p^2 f^{p^2 - 2p - 1} \Delta(f)^2
&= f^{p^2 - 2p - 1}
   \bigl(f^p - (x_1^{4p} + \cdots + x_5^{4p})\bigr)^2 \\
&\equiv f^{p^2 - 1}
   \pmod{(x_1^9, \ldots, x_5^9)}.
\end{aligned}
\]
    The $p$-adic order of the coefficient of \((x_1 \cdots x_4)^{p^2 - 1}\) in \(f^{p^2 - 1}\) is \(2\);
    hence \(f^{p^2 - p - 1}\Delta(f)\) is not contained in \(\m^{[p^2]}\).
    Thus \(I(2,1) \nsubseteq \m^{[p^2]}\), as desired.

    \item If \(p = 7\), then \(\bs(f) = (0,1,0,0,\ldots)\).
    In particular, \(R\) is perfectoid pure with \(\ppt(R,p) = 6/7\).
    Since \(u(F_* f^{p-1}) = \m\), we have \(m = 1\) and \(t = -1\), where \(m, t\) are defined in \cref{Fano-case}.
    Hence it suffices to show that \(s_1 = 1\) and \(s_2 = 0\) by \cref{Fano-case}.
    Since \(f^{p-1} \in \m^{[p]}\) and
    \[
    f^{p-2} \equiv (x_1 \cdots x_5)^4 \notin \m^{[p]} \pmod{\m^{[p]}},
    \]
    we obtain \(s_1 = 1\).
    By the same argument as in \textup{(3)}, it suffices to show that
    the $p$-adic order of the coefficient of
    \(x_1^{48}x_2^{48}x_3^{40}x_4^{28}x_5^{28}\) in \(f^{p^2 - 1}\) is \(1\).
    Indeed, this coefficient is
    \[
    \frac{(p^2 - 1)!}{12! \, 12! \, 10! \, 7! \, 7!},
    \]
    whose $p$-adic order is \(1\), as required.
\end{itemize}
In conclusion, if \(p > 2\), then \(R\) is perfectoid BCM-regular by \cref{Fano-case}.
\end{example}

\bibliographystyle{skalpha}
\bibliography{bibliography.bib}

\end{document}